\newcommand{\der}{\delta}
\newcommand{\hpsi}{\hat{\psi}}
\newcommand{\E}{\mathbb E}
\newcommand{\R}{\mathbb R}
\newcommand{\N}{\mathbb N}
\newcommand{\bP}{\mathbb P}
\newcommand{\Z}{\mathbb Z}
\newcommand{\be}{\mathbf{E}}
\newcommand{\bp}{\mathbf{P}}
\newcommand{\cb}{\mathcal B}
\newcommand{\cac}{\mathcal C}
\newcommand{\cd}{\mathcal D}
\newcommand{\ce}{\mathcal E}
\newcommand{\cf}{\mathcal F}
\newcommand{\cg}{\mathcal G}
\newcommand{\ch}{\mathcal H}
\newcommand{\ck}{\mathcal K}
\newcommand{\cm}{\mathcal M}
\newcommand{\cn}{\mathcal N}
\newcommand{\cs}{\mathcal S}
\newcommand{\al}{\alpha}
\newcommand{\ep}{\varepsilon}
\newcommand{\ga}{\gamma}
\newcommand{\ka}{\kappa}
\newcommand{\la}{\lambda}
\newcommand{\si}{\sigma}
\newcommand{\lp}{\left(}
\newcommand{\rp}{\right)}
\newcommand{\lc}{\left[}
\newcommand{\rc}{\right]}
\newcommand{\lcl}{\left\{}
\newcommand{\rcl}{\right\}}
\newcommand{\lln}{\left|}
\newcommand{\rrn}{\right|}
\newcommand{\lla}{\left\langle}
\newcommand{\rra}{\right\rangle}
\newtheorem{theorem}{Theorem}[section]
\newtheorem{definition}[theorem]{Definition}
\newtheorem{hypothesis}[theorem]{Hypothesis}
\newtheorem{lemma}[theorem]{Lemma}
\newtheorem{notation}[theorem]{Notation}
\newtheorem{proposition}[theorem]{Proposition}
\theoremstyle{remark}
\newtheorem{remark}[theorem]{Remark}
\theoremstyle{remark}
\newcommand{\bean}{\begin{eqnarray*}}
\newcommand{\eean}{\end{eqnarray*}}
\newcommand{\ben}{\begin{enumerate}}
\newcommand{\een}{\end{enumerate}}
\newcommand{\beq}{\begin{equation}}
\newcommand{\eeq}{\end{equation}}
\begin{document}

\title[SHE driven by rough spatial noise]{Quenched asymptotics for a 1-d stochastic heat equation driven by a rough spatial noise}

\address{Prakash Chakraborty: Department of Statistics,
  Purdue University,
  150 N. University Street,
  W. Lafayette, IN 47907,
  USA.}
\email{chakra15@purdue.edu}

\address{Xia Chen:  Department of Mathematics, University of Tennessee Knoxville,  TN 37996-1300, USA}
\email{xchen@math.utk.edu}
\thanks{X. Chen is partially supported by a grant 
from the Simons Foundation \#244767}

\address{Bo Gao:  Department of Mathematics, University of Tennessee Knoxville,  TN 37996-1300, USA}
\email{bgao3@vols.utk.edu}

\address{Samy Tindel: Department of Mathematics,
  Purdue University,
  150 N. University Street,
  W. Lafayette, IN 47907,
  USA.}
\email{stindel@purdue.edu}

\thanks{S. Tindel is supported by the NSF grant  DMS-1613163}

%\date{\today}

\author[P. Chakraborty, X. Chen, B. Gao, S. Tindel]
{Prakash Chakraborty, Xia Chen, Bo Gao, Samy Tindel}

\begin{abstract}
In this note we consider the parabolic Anderson model in one dimension with time-independent fractional noise $\dot{W}$ in space. We consider the case $H<\frac{1}{2}$ and get existence and uniqueness of solution. In order to find the quenched asymptotics for the solution we consider its Feynman-Kac representation and explore the asymptotics of the principal eigenvalue for a random operator of the form $\frac{1}{2} \Delta + \dot{W}$.
\end{abstract}

\maketitle

{
\hypersetup{linkcolor=black}
 \tableofcontents 
}

\section{Introduction}
The non trivial effects of random perturbations on the spectrum of the Laplace operator has been a fascinating object of research in the recent past. While a direct spectral analysis of perturbed Laplacians is possible in simple and regular enough cases \cite{GK,Ki}, the problem is often addressed through the  large time behavior of the so-called parabolic Anderson model. More specifically the parabolic Anderson model is a stochastic heat equation of the following form:
\beq\label{eq:she-1}
\dfrac{\partial u_{t}(x)}{\partial t} = \dfrac{1}{2} \Delta u_{t}(x) + u_{t}(x) \, \dot{W}(x),
\eeq
where the noise $\dot{W}$ is a stationary spatial random field. Because of the linear form of the noise term, it is possible under certain regularity conditions to express the solution of \eqref{eq:she-1} using a Feynman-Kac representation. Related to this representation, the asymptotic behavior of $u_{t}(x)$ as $t$ goes to $\infty$ gives some insight on the spectrum of the operator $\frac12 \Delta + \dot{W}$.

In the spatially discrete setting with a discrete Laplacian, asymptotic equivalents for the solution of equation~\eqref{eq:she-1} have been studied at length in \cite{mem_carmona_molchanov} and \cite{konig_book}. In particular, if $u_{t}(x)$ is the solution under the discrete setup in $\Z^{d}$ and $U(t) = \sum_{z \in \mathbb{Z}^d} u_{t}(x)$ is the total mass, then it has been proven that both $\frac{1}{t} \log (u_{t}(x))$ and $\frac{1}{t}\log (U(t))$ converge almost surely under certain regularity assumptions. Any information about those limits can then be translated into an information about the principal eigenvalue of $\frac12 \Delta + \dot{W}$.

In the spatially continuous setting, the picture is not as clear. Indeed, the large time behavior of the solution $u$ to equation~\eqref{eq:she-1} has been analyzed in \cite{carmona_molchanov} and \cite{gartner_konig_molchanov}. In particular, when the noise is Gaussian with a smooth covariance structure given by $\ga(x) = \textnormal{Cov} ( \dot{W}(0) \dot{W}(x))$ satisfying
$\lim_{|x| \to \infty} \ga(x) = 0$, then we have for $x \in \R^d$
\beq\label{eq:lyapou-smooth-noise}
\lim_{t \to \infty} \dfrac{1}{t \sqrt{\log t}} \log u_{t}(x) = \sqrt{2 d \ga(0)}\hspace{0.2in} \textnormal{a.s.}
\eeq
The fact that the renormalization in \eqref{eq:lyapou-smooth-noise} is of the form $t \sqrt{\log t}$ suggests that the principal eigenvalue of $\frac12 \Delta + \dot{W}$ is divergent, which is confirmed in \cite{AC,Mk} by asymptotics on large boxes  performed for the white noise.

Motivated by the examples above, non-smooth cases of equation~\eqref{eq:she-1} under the setting of generalized Gaussian fields have been analyzed in \cite{chen-ann14}. Namely, the reference \cite{chen-ann14} handles the case of a centered Gaussian noise $W$ whose covariance function $\Lambda$ is defined informally (see Section~\ref{sec:noise} for more precise definition) by
\begin{equation}\label{eq:Intronoise_cov}
\mathbf{E} \left[ W(\phi) W(\psi) \right] = \int_{\R^2} \phi(x) \psi(y) \Lambda(x-y) dx dy ,
\end{equation} 
for all infinite differentiable functions $\phi$ with compact support. The class of functions $\Lambda$ considered in \cite{chen-ann14} are continuous on $\R^d\setminus {\{0\}}$, bounded away from $0$ with a singularity at $0$ measured by $\Lambda(x) \sim c(\Lambda){|x|}^{-\al}$ with $\al\in(0,1)$ as $x \downarrow 0$. In this framework, the following almost sure renormalization result is proved in~\cite{chen-ann14} for any $x \in \R^d$:
\beq\label{eq:intro-1}
\lim_{t \to \infty} \frac{1}{t(\log t)^{\frac{2}{4-\al}}} \log u_{t}(x) = c_{\al}
\hspace{0.2in}\textnormal{a.s.} \ ,
\eeq
with an explicit constant $c_{\al}$.
Notice that this result is also applicable under a fractional white noise with Hurst parameter $H > \frac{1}{2}$. Namely, considering $d=1$ for simplicity, relation~\eqref{eq:intro-1} holds for a fractional Brownian noise $W$ with $\al = 2-2H$ (that is a renormalization of the form $t(\log t)^{\frac{1}{1+H}}$). 

In this note we aim to carry forward the asymptotic result~\eqref{eq:intro-1} to very singular environments. Specifically,  we consider a fractional noise $W$ as in \cite{chen-ann14}, but we allow the Hurst parameter to be less than $\frac{1}{2}$ (so that our noise is rougher than white noise). Going back to expression~\eqref{eq:Intronoise_cov}, we assume that $\Lambda$ is a positive definite distribution whose Fourier transform $\mathcal{F} \Lambda = \mu$ is a tempered measure given by $\mu(d \xi) = C_H{|\xi|}^{1-2H}d\xi$. That is for test functions $\phi$ and $\psi$ we have
\beq\label{eq:Intronoise_cov_fourier}
\be \lc W(\phi) W(\psi) \rc = \int_{\R} \cf \phi(\xi) \overline{\cf \psi(\xi)} \mu(d \xi).
\eeq

Let us first notice that equation~\eqref{eq:she-1} driven by a fBm with $H < \frac{1}{2}$ is not explicitly solved in the literature. As we will see, one can give a pathwise meaning, in a Young type sense, to the solution of equation~\eqref{eq:she-1}. Namely we show that $t \mapsto u_t$ can be seen as a continuous function with values in a weighted Besov space (we refer to \cite{mourrat-weber} for a complete definition of weighted Besov spaces). We will set up a fixed point argument in those weighted spaces and obtain the following result (see Theorem~\ref{thm:exi+uniq} for a more precise formulation).
\begin{theorem}\label{thm:Introthm-1}
Let $W$ be the Gaussian noise considered in \eqref{eq:Intronoise_cov_fourier} with $H \in \lp 0, \frac{1}{2}\rp$. Let $u_0$ be an initial condition lying in a weighted Besov H\"older space (see Definition~\ref{def:wt-besov} or a more detailed description). Then there exists a unique solution to \eqref{eq:she-1} in a space of continuous functions with values in Besov spaces, and where the integral with respect to $W$ is understood in the Young sense.
\end{theorem}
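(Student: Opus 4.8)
The plan is to recast equation~\eqref{eq:she-1} in its mild form
\beq\label{eq:mild-sketch}
u_t \,=\, P_t u_0 \,+\, \iot P_{t-s}\lp u_s\,\dot W\rp ds \,=:\, P_t u_0 + (\ck u)_t ,
\eeq
where $P_t=e^{\frac{t}{2}\Delta}$ is the heat semigroup and, the noise being time independent, the product $u_s\,\dot W$ is for each fixed $s$ a purely spatial operation, interpreted through Bony's paraproduct decomposition. The key observation is that a centred Gaussian field with spectral measure $\mu(d\xi)=C_H|\xi|^{1-2H}d\xi$ belongs almost surely to the spatial Besov space $\cac^{H-1-\ka}_\rho$ for every $\ka>0$ and every polynomially decaying weight $\rho$ --- this is where the precise form of the covariance \eqref{eq:Intronoise_cov_fourier} comes in. Consequently, as soon as $u_s\in\cac^\beta_\rho$ with $\beta+(H-1)>0$, that is $\beta>1-H$, the resonant term in the decomposition of $u_s\dot W$ is classically well defined and $u_s\dot W\in\cac^{H-1-\ka}$; this is the Young interpretation of the integral against $W$ in the statement, and no Wick renormalisation is needed. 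Since $H\in(0,\frac12)$ we have $1-H\in(\frac12,1)$, so such an exponent $\beta<1$ exists: we fix it, the corresponding small $\ka$, and a weight $\rho$, and look for $u$ in $C([0,T];\cac^\beta_\rho)$ (inserting, if needed, a weight $t^\gamma$ near $t=0$ to accommodate a rougher initial datum as in Definition~\ref{def:wt-besov}).

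The two analytic ingredients, both furnished by the weighted Besov calculus of \cite{mourrat-weber}, are the Schauder estimate $\|P_t f\|_{\cac^{\al+2\theta}_\rho}\lesssim t^{-\theta}\|f\|_{\cac^\al_\rho}$ for $\theta\ge0$, and the multiplication estimate $\|fg\|_{\cac^{H-1-\ka}_{\rho_1\rho_2}}\lesssim\|f\|_{\cac^\beta_{\rho_1}}\|g\|_{\cac^{H-1-\ka}_{\rho_2}}$, valid under the Young condition above. Combining them, $P_{t-s}(u_s\dot W)\in\cac^{H-1-\ka+2\theta}_{\rho_1\rho_2}$ with the singular factor $(t-s)^{-\theta}$; choosing $\theta$ with $1-H<\theta<1$ --- which is possible precisely because $H>0$ --- gives simultaneously $H-1-\ka+2\theta>\beta$ and $\iot(t-s)^{-\theta}ds<\infty$, so that $\ck$ acts boundedly between the relevant weighted spaces. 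One then runs a fixed point argument for the map $u\mapsto P_\cdot u_0+\ck u$ on a short interval $[0,T_0]$; since \eqref{eq:she-1} is linear, $T_0$ can be taken to depend only on the (almost surely finite) weighted norm of $\dot W$ and not on $u_0$, so that iterating the construction over $[kT_0,(k+1)T_0]$ yields a solution on an arbitrary time interval. Uniqueness comes from the same estimates, and continuity of $t\mapsto u_t$ --- with values in the relevant, possibly slightly weaker, Besov space --- is read off from \eqref{eq:mild-sketch} and the smoothing properties of $P_t$.

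The main obstacle is the bookkeeping of the weights. As $\dot W$ is spatially unbounded on $\R$, it lies only in weighted spaces $\cac^{H-1-\ka}_\rho$ with $\rho$ genuinely decaying, and the multiplication estimate then deteriorates the weight from $\rho_1$ to $\rho_1\rho_2$ at each application of $\ck$; one must organise the fixed point --- using, as in \cite{mourrat-weber}, a suitable (possibly time dependent or exponentially decaying) family of weights --- so that this loss, together with the weight dependence of the constants in the Schauder estimate, stays under control over a bounded time horizon and the fixed point still closes. The other delicate point, already visible above, is the narrow window $1-H<\theta<1$ available for the Schauder exponent, which is exactly where the hypothesis $H\in(0,\frac12)$, and in particular $H>0$, enters. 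For the precise statement, with the exact function spaces and the role of the initial condition, see Theorem~\ref{thm:exi+uniq}.
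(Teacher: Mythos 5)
Your proposal follows essentially the same route as the paper: recast \eqref{eq:she-1} in mild form, show $\dot W$ lies a.s.\ in a polynomially weighted Besov space of regularity just below $H-1$, interpret $u_s\dot W$ via the Young/paraproduct product estimate under the condition $\ka_u-\ka>0$, and close a Picard iteration using the Schauder estimate with a time singularity exponent in the window you describe, handling the weight loss from each multiplication by moving to a time-dependent family of stretched-exponential weights as in \cite{mourrat-weber} (the paper's Lemma~\ref{lem:fact3} is precisely the mechanism that trades the polynomial weight $\hat w_{\der\si}$ of the noise for a harmless extra factor $|t-s|^{-\si}$). The argument is correct in outline and matches Proposition~\ref{prop:exi+uni} and Theorem~\ref{thm:exi+uniq}.
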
 
 
Once we have solved \eqref{eq:she-1} , we will give a property of the (formal) operator $\frac{1}{2} \Delta + \dot{W}$ which is reminiscent of the density of states results contained e.g. in \cite{GK, Ki}. 
%Next we study the principal eigenvalue corresponding to the random operator $\frac{1}{2}\Delta + \dot{W}$. 
The result we obtain can be summarized informally in the following theorem:
\begin{theorem}\label{thm:Introthm-2}
Let $\la_{\dot{W}}(Q_t)$ be the principal eigenvalue of the random operator $\frac{1}{2}\Delta + \dot{W}$ over a restricted space of functions having compact support on $(-t,t)$. Then the following limit holds:
\beq\label{eq:Introthm-2}
\lim_{t \to \infty} \dfrac{\la_{\dot{W}} (Q_t)}{(\log t)^{\frac{1}{1+H}}} = {\lp 2 c_H \ce \rp}^{\frac{1}{1+H}}  \hspace{0.2in} \textnormal{a.s,} 
\eeq
with a strictly positive constant $\ce$ defined by
\beq\label{eq:ce-0}
\ce = \sup_{g \in \cg(\R)} \int_{\R} {\lln\int_{\R} e^{\imath \la x}  g^2(x) dx \rrn}^2 {\lln \la \rrn}^{1-2H} d\la 
\eeq
where $\cg (\R)$ is the space of all Schwartz functions satisfying ${\|g\|}_2^2 + \frac{1}{2} {\|g'\|}_2^2 = 1$.
\end{theorem}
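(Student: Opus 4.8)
The plan is to combine the Rayleigh--Ritz description of $\la_{\dot W}(Q_t)$ with a sharp Gaussian tail estimate for a single test function, and then run a localisation/Borel--Cantelli argument on a geometric subsequence in $t$.

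\emph{Step 1: variational formula and single--function tail.} One has
\[
\la_{\dot W}(Q_t)\;=\;\sup\Big\{\,W(g^2)-\tfrac12\|g'\|_{L^2}^2\;:\;g\in H^1_0(-t,t),\ \|g\|_{L^2}=1\,\Big\},
\]
where for $g\in H^1_0(-t,t)$ the function $g^2$ belongs to $H^1$ with compact support, so $W(g^2)$ is a centred Gaussian variable with variance $\si^2(g)=c_H\int_{\R}|\cf(g^2)(\xi)|^2|\xi|^{1-2H}d\xi$, finite because $1-2H\in(0,1)$. Applying the two--parameter rescaling $g(\cdot)\mapsto\theta\,a^{1/2}g(a\,\cdot)$ — under which $\si^2$ scales by $\theta^4a^{2-2H}$ and $\|g\|_{L^2}^2+\tfrac12\|g'\|_{L^2}^2$ by $\theta^2(\|g\|_{L^2}^2+\tfrac12a^2\|g'\|_{L^2}^2)$ — and optimising over $a$ turns the definition \eqref{eq:ce-0} of $\ce$ into the scale--sharp inequality $\si^2(g)\le c_H\,c(H)\,\ce\,\|g'\|_{L^2}^{2(1-H)}$ for every $g$ with $\|g\|_{L^2}=1$, where $c(H)=2^{1+H}(1+H)^{-(1+H)}(1-H)^{-(1-H)}$. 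Feeding this into the standard Gaussian tail bound and then minimising the exponent over the admissible values of $\|g'\|_{L^2}^2$ yields the uniform estimate
\[
\bP\Big(W(g^2)-\tfrac12\|g'\|_{L^2}^2>L\Big)\;\le\;\exp\!\Big(-\frac{L^{1+H}}{2c_H\ce}\Big),\qquad \|g\|_{L^2}=1,\ L>0,
\]
which is the engine for both halves of \eqref{eq:Introthm-2}.

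\emph{Step 2: lower bound.} Fix $\epsilon>0$, take a Schwartz $g$ that is $\epsilon$--optimal in \eqref{eq:ce-0}, truncate it to a fixed large box of side $R$ (this costs only $O(\epsilon)$ in $\ce$), tile $(-t,t)$ by $N\asymp t/(\log t)^{c}$ translates $I_1,\dots,I_N$ of that box separated by gaps of length a small power of $\log t$, and in each $I_j$ place the correspondingly translated/rescaled copy $g_j$ (the internal scale being the optimiser found in Step~1). Since $\la_{\dot W}(Q_t)\ge\max_{j\le N}\big(W(g_j^2)-\tfrac12\|g_j'\|_{L^2}^2\big)$, and the covariances $\E[W(g_j^2)W(g_k^2)]$ decay like $\mathrm{dist}(I_j,I_k)^{-(2-2H)}$ — the covariance kernel of $\dot W$ being, off the diagonal, a smooth function homogeneous of degree $-(2-2H)$, since $\cf\Lambda(\xi)=c_H|\xi|^{1-2H}$ — the chosen separation makes these correlations negligible. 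A second--moment argument then shows $\bP\big(\max_j(W(g_j^2)-\tfrac12\|g_j'\|_{L^2}^2)>L\big)\to 1$ whenever $L^{1+H}<(1-2\epsilon)\,2c_H\ce\log t$. Borel--Cantelli along $t_n=2^n$, monotonicity of $t\mapsto\la_{\dot W}(Q_t)$, and finally $R\to\infty$ give the $\liminf$ in \eqref{eq:Introthm-2}.

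\emph{Step 3: upper bound, the main obstacle.} A Neumann bracketing step — restricting a test function to each block of a partition of $(-t,t)$ into $\asymp t/R$ intervals $B_i$ of length $R$ is Neumann--admissible, and the Rayleigh numerator is additive over the partition — gives $\la_{\dot W}(Q_t)\le\max_{i}\la^{\mathrm N}_{\dot W}(B_i)$, the maximum of the Neumann principal eigenvalues of $\tfrac12\Delta+\dot W$ on the $B_i$. It therefore suffices, for fixed $R$ and $L\to\infty$, to prove the matching tail $\bP\big(\la^{\mathrm N}_{\dot W}(B_R)>L\big)\le\exp\!\big(-(1-o(1))L^{1+H}/(2c_H\ce)\big)$, so that a union bound over the $\asymp t/R$ blocks is Borel--Cantelli summable along $t_n=2^n$. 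On a fixed block the Rayleigh quotient is still an infinite--dimensional supremum, and the key difficulty is that a naive $\epsilon$--net of an $H^1$--ball has entropy far exceeding the budget set by the exponent $L^{1+H}/(2c_H\ce)$. I would instead discretise by replacing an admissible $h$ by its piecewise--linear interpolant $\Pi_\delta h$ on a mesh $\delta=\delta(L)\to 0$: this does not increase $\|h'\|_{L^2}$, so the penalty term only improves; the residual Gaussian process $h\mapsto W(h^2-(\Pi_\delta h)^2)$ has increments of variance $O(\delta^{2(1-H)}L^2)$ and hence, by Dudley's entropy bound, supremum $o(L)$; and the interpolants fill an $O(R/\delta)$--dimensional ball whose $L^{-1}$--net has log--cardinality $O(R\delta^{-1}\log L)=o(L^{1+H})$. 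A union bound over that net using the estimate of Step~1, then Borel--Cantelli along $t_n=2^n$, monotonicity in $t$, and $R\to\infty$, conclude the argument. The delicate point throughout is quantitative: every loss incurred by localisation, by the interpolation, and by the net must be negligible on the scale set by $L^{1+H}/(2c_H\ce)$, which is exactly why the finite--dimensional reduction — rather than direct chaining over the function space — is needed.
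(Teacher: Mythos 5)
Your Steps 1 and 2 are sound and, modulo presentation, coincide with what the paper does: your Step 1 optimization over the internal scale is exactly the content of the paper's rescaling (Lemma~\ref{lem:scaled-eigen}, Remark~\ref{rmk:scaling} and the variance bound \eqref{eq:si_t^2_bnd}), and your lower bound via nearly independent translates of a near-optimal profile is the paper's Proposition~\ref{prop:lb} (which uses the quantitative Gaussian comparison Lemma~\ref{lem:gauss_vec_ineq} where you invoke a second-moment argument; both work).

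The genuine gap is in Step 3, at the point where you pass to Neumann eigenvalues. Neumann bracketing does give $\la_{\dot{W}}(Q_t)\le\max_i\la^{\mathrm N}_{\dot{W}}(B_i)$, but the tail bound you then need, $\bp(\la^{\mathrm N}_{\dot{W}}(B_R)>L)\le\exp(-(1-o(1))L^{1+H}/(2c_H\ce))$, does not follow from your Step 1 estimate, because Neumann-admissible test functions do not vanish at $\partial B_R$: their zero-extensions are not in $H^1(\R)$, so the scaling inequality $\si^2(g)\le c_H\,c(H)\,\ce\,\|g'\|_{L^2}^{2(1-H)}$ — whose constant is tied to the class $\cg(\R)$ in \eqref{eq:ce-0} — simply does not apply to them (already for $g$ constant on $B_R$ the left side is positive while the right side vanishes). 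The dangerous functions are bumps concentrated at scale $\|g'\|_2^{-1}$ against an endpoint of $B_R$: for these one still gets $\si^2(g)\asymp\|g'\|_2^{2(1-H)}$ (the jump in $g^2\mathbf{1}_{B_R}$ costs only a convergent $\int_{|\xi|\ge1}|\xi|^{-1-2H}d\xi$), but the sharp constant is governed by a half-line variational problem with a free boundary, which is not obviously $\le\ce$; an even reflection argument shows the half-line functional dominates the whole-line functional of the reflected profile, so one cannot rule out a strictly larger constant without further work. Since your union bound over $\asymp t/R$ blocks needs the exponent constant to be exactly $2c_H\ce$ to be Borel--Cantelli summable at the claimed threshold, this is not a cosmetic issue: as written, your upper bound proves only $\limsup_t\la_{\dot W}(Q_t)(\log t)^{-1/(1+H)}\le(2c_H\ce^{\mathrm N})^{1/(1+H)}$ for an unidentified Neumann constant $\ce^{\mathrm N}\ge\ce$. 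The paper circumvents precisely this by localizing with the G\"artner--K\"onig penalization $\Phi$ (Lemma~\ref{lem:eigen-gartner}), which reduces to overlapping boxes with Dirichlet-type conditions at cost $K/r$, so that the supremum stays over the class $\ck$, is compactified to $\cg$, and is then controlled by Borell--TIS together with the small-box entropy estimate of Lemma~\ref{lem:2} rather than by a finite-dimensional net. To repair your argument you would either need to prove that the half-line (free-boundary) analogue of \eqref{eq:ce-0} equals $\ce$, or replace Neumann bracketing by the Dirichlet penalization.
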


Using a Feynman-Kac representation for the solution $u$ of \eqref{eq:she-1}, our next step will be to 
relate the logarithmic behavior of $u_{t}$ to the principal eigenvalue $\la_{\dot{W}} (Q_t)$.
This is the contents of the following theorem:
\begin{theorem}\label{thm:Introthm-3}
Let $W$ be the Gaussian noise defined by \eqref{eq:Intronoise_cov_fourier} for $H < \frac{1}{2}$, and consider the unique solution $u$ to \eqref{eq:she-1}. Then for all $x \in \R$ we have
$$
\lim_{t \to \infty} \dfrac{\frac{1}{t} \log(u_t(x))}{\la_{\dot{W}} (Q_t)} = 1, \hspace{0.2in}\textnormal{a.s.}
$$
\end{theorem}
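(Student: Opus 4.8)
The plan is to establish the two one‑sided bounds
\[
\limsup_{t\to\infty}\frac{t^{-1}\log u_t(x)}{\la_{\dot W}(Q_t)}\le 1
\qquad\text{and}\qquad
\liminf_{t\to\infty}\frac{t^{-1}\log u_t(x)}{\la_{\dot W}(Q_t)}\ge 1
\qquad\text{a.s.;}
\]
the structural point that makes this work is that $\la_{\dot W}(Q_t)\to\infty$ by Theorem~\ref{thm:Introthm-2}, so additive corrections of size $o\big(t\,\la_{\dot W}(Q_t)\big)$ to $\log u_t(x)$ disappear after dividing. Throughout I would use the Feynman--Kac representation $u_t(x)=\be_x\big[u_0(B_t)\exp(\langle\dot W,L^{\cdot}_t\rangle)\big]$ (with $B$ a Brownian motion, $L^{\cdot}_t$ its local time and the bracket the Young pairing, as set up in the previous section), the Dirichlet Feynman--Kac semigroups $P^{Q,\dot W}_s$ on intervals $Q$ with their positive symmetric kernels $p^{Q,\dot W}_s$, the identity $\|P^{Q,\dot W}_s\|_{L^2\to L^2}=e^{s\la_{\dot W}(Q)}$, and the Rayleigh--Ritz formula $\la_{\dot W}(Q)=\sup\big\{\langle\dot W,g^2\rangle-\tfrac12\|g'\|_2^2:\ \|g\|_2=1,\ \mathrm{supp}\,g\subseteq Q\big\}$. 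I would also use, from the proof of Theorem~\ref{thm:Introthm-2}: with $R_t:=t(\log t)^a$ for a fixed $a>\tfrac1{2(1+H)}$ one has $\log R_t=(1+o(1))\log t$, hence $\la_{\dot W}(Q_{R_t})=(1+o(1))\la_{\dot W}(Q_t)$; and, by Borel--Cantelli applied to the tail estimates underlying that theorem, a.s.\ for all large $R$ both $\la_{\dot W}(Q_R)$ and $\la_{2\dot W}(Q_R)$ are $O\big((\log R)^{1/(1+H)}\big)$ while the relevant Besov norm of $\dot W$ over $Q_R$ is $O(\log R)$. Finally I fix the time scale $s_t:=t(\log t)^{-1/(2(1+H))}=o(t)$; since $H<\tfrac12$, one checks directly that $t^2/s_t=t(\log t)^{1/(2(1+H))}$, $s_t$, and $s_t\log t$ are all $o\big(t\,\la_{\dot W}(Q_t)\big)$.

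For the upper bound I would write $u_t(x)\lesssim (1+R_t)^{p}\,\be_x\big[e^{\langle\dot W,L_t\rangle}\big]$ (using at most polynomial growth of $u_0$) and split according to whether $B$ leaves $Q_{R_t}$ before time $t$. On $\{\tau_{Q_{R_t}}>t\}$ the contribution is $(P^{Q_{R_t},\dot W}_t\mathbf{1}_{Q_{R_t}})(x)$, which after one regularizing step $P^{Q_{R_t},\dot W}_t=P^{Q_{R_t},\dot W}_1P^{Q_{R_t},\dot W}_{t-1}$ and Cauchy--Schwarz is at most $\|p^{Q_{R_t},\dot W}_1(x,\cdot)\|_2\,(2R_t)^{1/2}e^{(t-1)\la_{\dot W}(Q_{R_t})}$, i.e.\ a polynomial in $t$ times a finite random constant times $e^{t\la_{\dot W}(Q_{R_t})}$. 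On $\{\tau_{Q_{R_t}}\le t\}$ I would decompose over the index $n\ge1$ of the farthest shell $S_n=\{nR_t<|y|\le(n+1)R_t\}$ reached; since $\{\text{reach }S_n\}\subseteq\{\tau_{Q_{nR_t}}\le t\}\cap\{B_{[0,t]}\subseteq Q_{(n+1)R_t}\}$, Cauchy--Schwarz bounds this contribution by $\big(\be_x[e^{2\langle\dot W,L_t\rangle};\tau_{Q_{(n+1)R_t}}>t]\big)^{1/2}\big(\bP_x(\tau_{Q_{nR_t}}\le t)\big)^{1/2}$; the first factor is treated as above and is $\le(\mathrm{poly})\,e^{\frac t2\la_{2\dot W}(Q_{(n+1)R_t})}=(\mathrm{poly})\,e^{O(t(\log(nR_t))^{1/(1+H)})}$, the second is $\le e^{-cn^2R_t^2/t}$ by the reflection principle. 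Summing over $n\ge1$: for fixed $t$ the summand tends to $0$ as $n\to\infty$, and the $n=1$ term is $\le e^{\frac t2O((\log t)^{1/(1+H)})-cR_t^2/(2t)}\le e^{-c't(\log t)^{2a}}$ because $2a>\tfrac1{1+H}$, which is negligible against $e^{t\la_{\dot W}(Q_{R_t})}$. This yields $\tfrac1t\log u_t(x)\le\la_{\dot W}(Q_{R_t})+o(1)=\la_{\dot W}(Q_t)(1+o(1))$.

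For the lower bound I would use the standing assumptions $u_0\ge0$, $u_0\not\equiv0$ and fix a ball $\mathcal U=\{u_0>c\}\neq\emptyset$; for $t$ large, $x\in Q_{t/2}$ and $\mathcal U\subseteq Q_{t/2}$. From the construction underlying the lower half of Theorem~\ref{thm:Introthm-2} I would take the explicit localized near‑optimizer $g_t$: a smooth function compactly supported in an open interval $B_t\subseteq Q_t$ with $|B_t|\le(\log t)^{O(1)}$, $\|g_t\|_2=1$, $\|g_t\|_\infty\le(\log t)^{O(1)}$ and $\langle\dot W,g_t^2\rangle-\tfrac12\|g_t'\|_2^2\ge(1-o(1))\la_{\dot W}(Q_t)$, and set $\tilde g_t:=g_t/\|g_t\|_\infty\le1$. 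Because $0\le\tilde g_t\le1$ and $u_0,\ e^{\langle\dot W,L_\cdot\rangle}\ge0$,
\[
u_t(x)\ \ge\ \be_x\Big[u_0(B_t)\,e^{\langle\dot W,L_t\rangle}\,\tilde g_t(B_{s_t})\,\tilde g_t(B_{t-s_t})\,\mathbf{1}_{E}\Big],
\]
where the event $E$ requires $B_{[0,s_t]}$ to go from $x$ into $B_t$ while staying in a bounded interval $I_1\supseteq\{x\}\cup B_t$, $B_{[s_t,t-s_t]}$ to stay in $B_t$, and $B_{[t-s_t,t]}$ to go from $B_t$ into $\mathcal U$ while staying in a bounded interval $I_2\supseteq B_t\cup\mathcal U$ and ending in $\mathcal U$. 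Applying the Markov property at $s_t$ and $t-s_t$, using $u_0\ge c$ on $\mathcal U$, and lower‑bounding the two travel pieces — Brownian motion confined to an interval of length $\le 2t$, run for time $s_t$, from a given point to a given point — which (by a Jensen estimate using the polylogarithmic control of the noise, together with $2t^2/s_t=o(t\la_{\dot W}(Q_t))$) is $\ge e^{-o(t\la_{\dot W}(Q_t))}$, would give
\[
u_t(x)\ \ge\ c\,|\mathcal U|\,e^{-o(t\la_{\dot W}(Q_t))}\,\big\langle\tilde g_t,\ P^{B_t,\dot W}_{t-2s_t}\tilde g_t\big\rangle .
\]
Then, by Jensen's inequality applied to the spectral decomposition of $P^{B_t,\dot W}_{t-2s_t}=e^{(t-2s_t)(\frac12\Delta+\dot W)}$ (so that no spectral gap is needed),
\[
\big\langle\tilde g_t,\ P^{B_t,\dot W}_{t-2s_t}\tilde g_t\big\rangle\ \ge\ \|\tilde g_t\|_2^2\,\exp\!\Big((t-2s_t)\,\tfrac{\langle\dot W,g_t^2\rangle-\frac12\|g_t'\|_2^2}{\|g_t\|_2^2}\Big)\ \ge\ (\log t)^{-O(1)}\,e^{(t-2s_t)(1-o(1))\la_{\dot W}(Q_t)},
\]
using $\|\tilde g_t\|_2^2=\|g_t\|_\infty^{-2}\ge(\log t)^{-O(1)}$. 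Since $(t-2s_t)(1-o(1))=t(1-o(1))$ and $\la_{\dot W}(Q_t)\to\infty$, the last three displays combine to give $\tfrac1t\log u_t(x)\ge\la_{\dot W}(Q_t)(1-o(1))$, which together with the upper bound yields the claim.

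The step I expect to be the main obstacle is importing the localized near‑optimizer $g_t$ in precisely the above form — small support, near‑optimal Rayleigh quotient, polylogarithmic sup norm — from the proof of Theorem~\ref{thm:Introthm-2}; should its support only be controlled at a coarser scale (anything of size $t^{o(1)}$ still suffices), the argument is unaffected. The only other nonroutine bookkeeping is the package of a.s.\ uniform‑in‑$R$ estimates used crudely in the shell sum of the upper bound — the $O\big((\log R)^{1/(1+H)}\big)$ bounds on $\la_{\dot W}(Q_R)$ and $\la_{2\dot W}(Q_R)$ together with $L^p(\Omega)$ moment bounds on the free Feynman--Kac functional over translated intervals — all of which follow from Borel--Cantelli and the chaos estimates for the rough noise already in place.
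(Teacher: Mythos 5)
Your two-sided strategy coincides with the paper's, and your upper bound is essentially the paper's proof of Proposition~\ref{prop:UB}: confine $B$ to a box of radius $\approx t(\log t)^{1/(2(1+H))}$, control the escape contributions by Cauchy--Schwarz, the reflection principle and the already-established growth $\la_{\theta\dot{W}}(Q_R)=O((\log R)^{1/(1+H)})$, bound the confined term by $e^{t\la_{\dot{W}}(Q_{R_t})}$, and use $\log R_t\sim\log t$. (The paper peels off a unit time interval by H\"older's inequality, paying a factor $p^{(1-H)/(1+H)}$ that is removed by $p\downarrow1$; your splitting $P^{Q,\dot{W}}_t=P^{Q,\dot{W}}_1P^{Q,\dot{W}}_{t-1}$ avoids that limit but requires an on-diagonal bound for the singular Dirichlet kernel $p_1^{Q_{R_t},\dot{W}}(x,\cdot)$ that grows subexponentially in $t$, whereas the paper's route produces the $t$-independent random constant $(\E_0[\exp(qV_1)])^{1/q}$.) Your lower bound is genuinely different: the paper invokes the abstract lower bounds of Lemmas~\ref{lem:0}(ii) and \ref{lem:1}(ii), which convert $u_t(0)$ into $\exp(t\la_{\dot{W}/p}(Q_{t^b}))$ at the cost of the negative-exponential-moment factor $D_{b,p,t}$, and then sends $b\uparrow1$, $p\downarrow1$; you localize on a near-optimal island and apply Jensen to the spectral measure of the Dirichlet semigroup. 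Both are standard PAM arguments; yours is more geometric and limit-free, the paper's avoids exhibiting an explicit localized optimizer.

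Two points in your write-up are genuine gaps rather than routine bookkeeping. First, all of your semigroup manipulations (kernels, self-adjointness, spectral decomposition of $\tfrac12\Delta+\dot{W}$ on an interval) presuppose a functional calculus for a distributional potential; for $H<\tfrac12$ the pairing $\langle\dot{W},L_t\rangle$ is \emph{not} a Young pairing (the local time is only $(1/2)^{-}$-H\"older in space while $\dot{W}$ has regularity below $-\tfrac12$), and $V_t$ exists only as the $L^2(\Omega\times\hat{\Omega})$ limit of Proposition~\ref{prop:exp_moment_conv}. The paper therefore proves every eigenvalue inequality for the mollified noise $\dot{W}^{\ep}$ and passes to the limit via Lemma~\ref{lem:-1}, which is why the quantities $\la^{+}_{\theta\dot{W}}$ appear; you must either do the same or justify the spectral theory for the singular operator directly. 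Second, the localized near-optimizer $g_t$ cannot simply be cited: what the proof of Proposition~\ref{prop:lb} actually yields, on an a.s.\ event along a subsequence, is a translate $(g^z)_t$ of a fixed rescaled test function, supported in an interval of length $O(h_t^{-1})$ with $\|(g^z)_t\|_\infty=O(h_t^{1/2})$ and $W((g_t^z)^2)\geq h_t^2$. This does have the support, sup-norm and Rayleigh-quotient properties you need, but you must extract it explicitly, upgrade the subsequential Borel--Cantelli statement to all large $t$, and your travel-piece lower bound on $\E_x[\exp(\int_0^{s_t}\dot{W}(B_r)\,dr);\cdots]$ for a distributional potential essentially has to reproduce the paper's control of $D_{b,p,t}$ (via the upper bound applied to $-\theta\dot{W}$, using the symmetry of the noise). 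None of this is a wrong turn, but as written these are the places where the proposal is not yet a proof.
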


As the reader might conceive, our main asymptotic result will be a simple consequence of Theorems~\ref{thm:Introthm-2} and \ref{thm:Introthm-3}. It gives a generalization of \eqref{eq:intro-1} to the case $H < \frac{1}{2}$. 

\begin{theorem}\label{thm:Introthm-4}
Under the same conditions as in Theorem~\ref{thm:Introthm-3} and for $H < \frac{1}{2}$ we have
$$
\lim_{t \to \infty} \dfrac{\log(u_t(x))}{t (\log t)^{\frac{1}{1+H}}} = (2 c_H \ce)^{\frac{1}{1+H}}, \hspace{0.2in}\textnormal{a.s.}
$$
\end{theorem}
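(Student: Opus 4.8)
The plan is to derive Theorem~\ref{thm:Introthm-4} by simply composing the two asymptotic results already established, namely Theorems~\ref{thm:Introthm-2} and \ref{thm:Introthm-3}. The structure of the argument is the following. First I would fix $x \in \R$ and write the trivial algebraic identity
\beq\label{eq:plan-decomp}
\dfrac{\log u_t(x)}{t (\log t)^{\frac{1}{1+H}}}
= \dfrac{\frac{1}{t} \log u_t(x)}{\la_{\dot W}(Q_t)} \cdot
\dfrac{\la_{\dot W}(Q_t)}{(\log t)^{\frac{1}{1+H}}},
\eeq
valid for all $t$ large enough that both $\log u_t(x)$ and $\la_{\dot W}(Q_t)$ are strictly positive (this positivity is guaranteed on an event of full probability by Theorem~\ref{thm:Introthm-2}, since $(2 c_H \ce)^{1/(1+H)} > 0$, and correspondingly for the first factor by Theorem~\ref{thm:Introthm-3}). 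Then I would let $t \to \infty$ in \eqref{eq:plan-decomp}: by Theorem~\ref{thm:Introthm-3} the first factor converges almost surely to $1$, and by Theorem~\ref{thm:Introthm-2} the second factor converges almost surely to $(2 c_H \ce)^{1/(1+H)}$. The product of two almost surely convergent sequences converges almost surely to the product of the limits, so the left-hand side of \eqref{eq:plan-decomp} converges almost surely to $(2 c_H \ce)^{1/(1+H)}$, which is exactly the claimed statement.

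The one genuine point requiring care is the intersection of null sets and the justification that the decomposition \eqref{eq:plan-decomp} makes sense for large $t$. Theorem~\ref{thm:Introthm-2} holds on an event $\Omega_1$ of probability one, and for each fixed $x$, Theorem~\ref{thm:Introthm-3} holds on an event $\Omega_2 = \Omega_2(x)$ of probability one; I would work on $\Omega_1 \cap \Omega_2$, which still has probability one. On this event, $\la_{\dot W}(Q_t) \sim (2 c_H \ce)^{1/(1+H)} (\log t)^{1/(1+H)} \to \infty$, so in particular $\la_{\dot W}(Q_t) > 0$ for $t$ beyond some (random) threshold $t_0(\omega)$, which legitimizes dividing by $\la_{\dot W}(Q_t)$ in the first factor; and once that factor tends to $1$, it is in particular eventually positive, so $\frac{1}{t}\log u_t(x)$ is eventually positive as well, and the whole expression in \eqref{eq:plan-decomp} is well defined for $t \geq t_0(\omega)$. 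Since we are only concerned with the limit as $t \to \infty$, the behavior for small $t$ is irrelevant.

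There is essentially no obstacle here: the theorem is a formal corollary, and the only thing to watch is not to claim a statement uniform in $x$ when Theorem~\ref{thm:Introthm-3} is stated pointwise in $x$ (so the final statement is likewise pointwise, "for all $x \in \R$", with the null set allowed to depend on $x$). If one wanted the stronger statement with a single null set working simultaneously for all $x$, one would need either a joint-in-$x$ version of Theorem~\ref{thm:Introthm-3} or a continuity/monotonicity argument in $x$; but as the statement of Theorem~\ref{thm:Introthm-4} is phrased pointwise, no such upgrade is needed and the proof is complete after passing to the limit in \eqref{eq:plan-decomp}.
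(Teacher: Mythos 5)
Your proof is correct and follows exactly the route the paper intends: the authors explicitly treat Theorem~\ref{thm:Introthm-4} as an immediate consequence of Theorems~\ref{thm:Introthm-2} and~\ref{thm:Introthm-3}, obtained by multiplying the two almost sure limits, and your careful handling of the intersection of null sets and the eventual positivity of $\la_{\dot W}(Q_t)$ only makes explicit what the paper leaves implicit. Nothing further is needed.
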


Let us say a few words about the methodology we have resorted to in order to get our main results.

\noindent
\emph{(i)}
Theorem~\ref{thm:Introthm-2} is obtained by splitting the eigenvalue problem into small intervals, similarly to what is performed in other parabolic Anderson model studies (see e.g \cite{konig_book} and \cite{chen-ann14}). Then on each subdomain we combine some variational arguments with supremum computations for Gaussian processes. An extra care is 
required in our case, due to the singularity of our noise.

\noindent
\emph{(ii)}
Theorem~\ref{thm:Introthm-3} relies on a Feynman-Kac representation of $u_t(x)$, whose main ingredient is an integrability property established thanks to a subtle sub-additive argument (see Proposition~\ref{prop:exp_moment_conv} below). Once this Feynman-Kac representation (involving a Brownian motion $B$) is given, a probabilistic cutoff procedure on the underlying Brownian motion $B$ allows to reduce the logarithmic behavior of $u_{t}(0)$ to the quantity $\la_{\dot{W}}(Q_t)$.

\noindent
\emph{(iii)}
As mentioned above, Theorem~\ref{thm:Introthm-4} is an easy consequence of Theorems~\ref{thm:Introthm-2} and~\ref{thm:Introthm-3}.

\smallskip

Eventually let us highlight the fact that Theorem~\ref{thm:Introthm-4} provides a rather complete description of the asymptotic behavior of $\log(u_t(x))$ in dimension 1. A very challenging situation would be to handle the case of a rough noise in dimension 2 or higher. In this case it is a well known fact that a renormalization procedure is needed to define the solution $u$ of \eqref{eq:she-1}, as shown e.g. in \cite{De}. The effect of this kind of renormalization procedure on the principal eigenvalue of $\frac{1}{2}\Delta + \dot{W}$ has been partially investigated for the space white noise when $d=2$ in \cite{AC}.

\noindent
\textbf{Notations.} We denote by $p_t(x)$ the one-dimensional heat kernel $p_t (x) = \left(2 \pi t\right)^{-1/2}e^{-|x|^2/{2t}}$, for any $t > 0$, $x \in \R$. The space of real valued infinitely differentiable functions with compact support is denoted by $\mathcal{D}(\R)$. The space of Schwartz functions is denoted by $\mathscr{S}(\R)$. Its dual, the space of tempered distributions, is $\mathscr{S}'(\R)$. The Fourier transform is defined as
\begin{equation*}
\mathcal{F}u(\xi) = \hat{u}(\xi) = \int_{\R} e^{-\iota \langle x, \xi \rangle} u(x) dx.
\end{equation*}
The inverse Fourier transform is ${\mathcal{F}}^{-1}u (\xi) = (2\pi)^{-1} \mathcal{F}(-\xi)$. 
Denote by $l$ the following probability density function in $\mathcal{S}(\mathbb{R})$:
$$
l(x) = c \exp \lp -\dfrac{1}{1-x^2} \rp \mathbf{1}_{(|x|<1)},
$$
where $c$ is a normalizing constant such that $\int_{\mathbb
R}l(x)dx=1$. For every $\ep > 0$, let the set of mollifiers generated by $l$ be given by $l_\varepsilon(x)=\varepsilon^{-1}l(\varepsilon^{-1}x)$.
Observe that, owing to the fact that $l$ is a probability measure, we have $\lim_{\xi\to 0}\cf l(\xi)=1$ and $\cf l(\xi)\le1$ for all $\xi\in\R$.

\section{Preliminaries}
This section is devoted to introduce the basic Besov spaces notions which will be used in the remainder of the paper. Observe that since we are dealing with a variable $x$ in the whole space $\R$, we will need to deal with weighted Besov spaces. The definitions and main properties of those spaces are borrowed from \cite{mourrat-weber}. 
\subsection{Besov spaces}
In this subsection we define some classes of weights which are compatible with our heat equation~\eqref{eq:she-1}. Two scales of weights will be used: stretched exponential weights and polynomial weights.
\begin{definition}\label{def:weights}
Let ${|x|}_{\ast} = \sqrt{1+|x|^2}$. Fix $\der \in (0,1)$ and $\theta \in (1,1/\der)$. Denote by $\mathcal{W}$ the class of weights consisting of:
\begin{itemize}
\item[(i)] the weights $w_{\ga}$ of the form $w_{\ga}(x) = e^{-\ga {|x|}_{\ast}^{\der}}$, with $\ga > 0$.
\item[(ii)] the weights $\hat{w}_{\si}$ of the form $\hat{w}_{\si}(x) = {|x|}_{\ast}^{-\si}$, with $\si > 0$.
\end{itemize}
\end{definition}

The definition of our Besov spaces depends heavily on a dyadic partition of unity. In order to handle weights as in Definition~\ref{def:weights} we have to work (as done in \cite{mourrat-weber}) with functions in the so-called Gevrey class, that we now proceed to define.
\begin{definition}\label{def:gevrey}
Let $\theta \geq 1$. We call $\mathcal{G}^{\theta}$, the set of infinitely differentiable functions $f:\R \to \R$ satisfying
\begin{center}
 for every compact $K$, there exists $C < \infty$ such that for every $n \in \mathbb{N}$,
 $$ \sup_K|\partial^n f| \leq C^{n+1} (n!)^{\theta}.$$
\end{center} 
We let $\mathcal{G}_c^{\theta}$ be the set of compactly supported functions in $\mathcal{G}^{\theta}$. 
\end{definition}

We are now ready to state the existence of a partition of unity in the Gevrey class $\cg_c^{\theta}$.
\begin{proposition}\label{prop:unity_partition}
One can construct two functions $\tilde{\chi},\chi \in \mathcal{G}_c^{\theta}$, taking values in $\left[0,1\right]$ and such that 
\begin{itemize}
\item[(i)] \textnormal{Supp} $\tilde{\chi} \subseteq \left[0,\frac{4}{3}\right]$ and \textnormal{Supp} $\chi \subseteq \left[\frac{3}{4}, \frac{8}{3}\right]$.
\item[(ii)] For all $\xi \in \R$, we have $\tilde{\chi}(\xi) + \sum_{k=0}^{\infty} \chi(2^{-k}{\xi}) = 1$. 
\end{itemize}
In the sequel we also set $\chi_k(\xi) = \chi(2^{-k}\xi)$ for $k \geq 0$.
\end{proposition}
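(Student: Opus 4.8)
The plan is to follow the classical construction of a Littlewood--Paley dyadic decomposition, the only additional input being that every building block must be taken in the Gevrey class $\cg^{\theta}$ rather than merely in $\cac^{\infty}$; this is the construction carried out in \cite{mourrat-weber}, and I recall the main steps. The crucial elementary fact is that, because $\theta>1$ (recall that $\theta\in(1,1/\der)$ by Definition~\ref{def:weights}), the class $\cg^{\theta}$ contains nonzero compactly supported functions, a model example being
\[
g(x)=\exp\!\left(-x^{-1/(\theta-1)}\right)\mathbf{1}_{(x>0)},
\]
which is readily checked to belong to $\cg^{\theta}$. Before anything else I would record the stability properties of $\cg^{\theta}$ used below: it is preserved by affine changes of variable $x\mapsto\la x+b$, by products (via the Leibniz rule together with the elementary bound $j!\,(n-j)!\le n!$), by primitivation, and by locally finite sums. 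None of these is difficult, but they deserve to be stated explicitly, since the content of the proposition is precisely to remain inside $\cg^{\theta}$.

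Next I would manufacture a Gevrey plateau function. With $p=\tfrac34$ and $q=\tfrac43$, set $G(x)=g(x-p)\,g(q-x)$, which by the stability properties is a nonnegative element of $\cg_{c}^{\theta}$ with support exactly $[p,q]$, and define
\[
\rho(x)=\left(\int_{\R}G(t)\,dt\right)^{-1}\int_{x}^{\infty}G(t)\,dt .
\]
Then $\rho\in\cg^{\theta}$ — indeed $\rho^{(n)}$ for $n\ge 1$ is, up to a multiplicative constant, a derivative of $G$ — it is non-increasing with values in $[0,1]$, equals $1$ on $(-\infty,p]$ and vanishes on $[q,\infty)$. I would then set
\[
\tilde{\chi}(\xi)=\rho(|\xi|),\qquad \chi(\xi)=\rho(|\xi|/2)-\rho(|\xi|),
\]
which lie in $\cg_{c}^{\theta}$ (difference and affine precomposition of Gevrey functions, noting that $\rho$ is constant near $0$ so composing with $|\cdot|$ stays in $\cg^{\theta}$). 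Here the supports in (i) are understood in the radial variable $r=|\xi|$: one reads off directly that $\tilde{\chi}$ vanishes for $|\xi|\ge\tfrac43$, while $\chi(\xi)=0$ as soon as $|\xi|\le\tfrac34$ (both terms equal $1$) or $|\xi|\ge\tfrac83$ (both terms equal $0$); and $0\le\chi\le 1$ because $\rho$ is non-increasing, $\tfrac{|\xi|}{2}\le|\xi|$, and $0\le\rho\le 1$.

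Property (ii) then reduces to a one-line telescoping identity: for every $\xi$ and every $N\ge 0$,
\[
\sum_{k=0}^{N}\chi(2^{-k}\xi)=\sum_{k=0}^{N}\big(\rho(2^{-(k+1)}|\xi|)-\rho(2^{-k}|\xi|)\big)=\rho(2^{-(N+1)}|\xi|)-\rho(|\xi|).
\]
Since $2^{-(N+1)}|\xi|\to 0$ and $\rho$ is continuous with $\rho(0)=1$, letting $N\to\infty$ gives $\sum_{k\ge 0}\chi(2^{-k}\xi)=1-\rho(|\xi|)=1-\tilde{\chi}(\xi)$, which is exactly (ii); moreover the series is locally a finite sum, since $\chi(2^{-k}\xi)\neq 0$ forces $2^{-k}|\xi|\in[\tfrac34,\tfrac83]$, a bounded range of $k$ once $\xi$ stays bounded. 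The only genuinely substantive point in the whole proof is the Gevrey bookkeeping — the existence of a nonzero element of $\cg_{c}^{\theta}$, which hinges on $\theta>1$, and the stability of $\cg^{\theta}$ under the operations listed in the first paragraph — and that is where the bulk of the write-up would go; the support chasing and the telescoping sum are elementary.
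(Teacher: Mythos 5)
Your construction is correct and is essentially the standard one that the paper implicitly relies on by citing \cite{mourrat-weber} without giving a proof: a compactly supported Gevrey bump (available precisely because $\theta>1$), its normalized primitive as a plateau function $\rho$, the definitions $\tilde{\chi}=\rho(|\cdot|)$ and $\chi=\rho(|\cdot|/2)-\rho(|\cdot|)$, and a telescoping sum for (ii). Your reading of the supports in (i) as conditions on $|\xi|$ is the right one — otherwise (ii) would fail for $\xi<0$ — and the only places where real work is hidden, namely that $\exp(-x^{-1/(\theta-1)})\mathbf{1}_{(x>0)}$ belongs to $\mathcal{G}^{\theta}$ and that $\mathcal{G}^{\theta}$ is stable under the operations you list, are correctly identified and are classical facts.
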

With the partition of unity in hand, the blocks $\Delta_ku$ of the Besov type analysis can be defined as follows.

\begin{definition}\label{def:wt_blocks}
Set $\chi_{-1} = \tilde{\chi}$, and define for $k \geq -1$ and $u \in \cs(\R)$, 
\begin{equation*}
\Delta_k u = {\mathcal{F}}^{-1}(\chi_k \hat{u}).
\end{equation*}
\end{definition} 

Our analysis will rely on Besov spaces defined through the weighted blocks introduced in Definition~\ref{def:wt_blocks}. 
\begin{definition}\label{def:wt-besov}
Let $\chi$ and $\tilde{\chi}$ be the functions introduced in Proposition~\ref{prop:unity_partition}. For any $\kappa \in \R$, $w \in \mathcal{W}$, $p,q \in [1, \infty]$ and $f \in \cs(\R)$, we define weighted norms of $f$ in the following way:
\begin{equation}\label{eq:wt_Besov_norm}
{\|f\|}_{\mathcal{B}_{p,q}^{\kappa, w}} := \left[ \sum_{j=-1}^{\infty} \left( 2^{\ka j}{\|\Delta_j f\|}_{L^p_{w}} \right)^q \right]^{\frac{1}{q}},
\end{equation}
where $L^p_w$ is the weighted space $L^p(\R, w(x)dx)$. Denote the weighted Besov space $\mathcal{B}_{p,q}^{\ka, w}$ as 
$$
\mathcal{B}_{p,q}^{\ka, w} = \lcl f \in \cs(\R); {\|f\|}_{\mathcal{B}_{p,q}^{\ka, }} < \infty \rcl.
$$
\end{definition}

\begin{remark}
Notice that as in \cite{triebel3}, we define ${\|f\|}_{L^p(\R^d; w(x)dx)}$ as ${\|fw\|}_{L^p(\R^d)}$. This is slightly different from \cite{mourrat-weber}, but yields similar results.
\end{remark}

In the next section we will solve the heat equation in a weighted Besov space whose weight is varying with time. We now define this kind of space.
\begin{notation}\label{not:c_p}
Let $\la$ and $\si$ be two strictly positive constants. For $t \geq 0$ we define $v$ as the function $v_t = w_{\la+ \si t}$, where we recall that $w_{\ga}$ is introduced in Definition~\ref{def:weights}. We consider an additional parameter $\ka_u > 0$ and $p \in [1,\infty)$. Then the space $\cac_{p}^{\ka_u, \la, \si}$ is defined by 
\begin{equation*}
\mathcal{C}_{q}^{\ka_u, \la, \si} = \lcl f \in \mathcal{C}([0,T] \times \R); {\|f_t\|}_{{\mathcal{B}}_{q,\infty}^{\ka_u, v_t}} \leq c_f \rcl.
\end{equation*}
\end{notation}

\subsection{Description of the noise}\label{sec:noise}
The noise driving equation~\eqref{eq:she-1} is considered as a centered Gaussian family $\left\lbrace W(\phi), \phi \in \mathcal{D}(\R) \right\rbrace$ on a complete probability space $(\Omega, \cf, \bp)$ with the following covariance structure:
\begin{equation}\label{eq:b}
\mathbf{E} \left[ W(\phi) W(\psi) \right] = \int_{\R^2} \phi(x) \psi(y) \Lambda(x-y) dx dy,
\end{equation} 
where $\Lambda:\R \mapsto \R_{+}$ is a non-negative definite distribution. In fact the covariance structure of $W$ is better described in Fourier modes. Indeed, the distribution $\Lambda$ can be seen as the inverse Fourier transform of a measure $\mu$ on $\R$ defined by 
$$
\mu(d\xi) = c_H {|\xi|}^{1-2H} d \xi.
$$ 
Then for $\phi, \psi \in \cd(\R)$ we have 
\beq\label{eq:cov_noise_Fourier}
\be \lc W(\phi) W(\psi) \rc = \int_{\R} \cf \phi(\xi) \overline{\cf \psi(\xi)} \mu(d \xi).
\eeq
It can be shown that \eqref{eq:cov_noise_Fourier} defines an inner product on $\cd(\R)$. We call $\ch$ the completion of $\cd(\R)$ with this inner product. It also holds that the variance of our noise $W$ has an alternate direct-coordinate representation (see e.g. \cite[relation (2.8)]{1d-rough}) in addition to the one suggested by \eqref{eq:cov_noise_Fourier}. Namely for $\phi \in \ch$, we have
\beq\label{eq:cov_noise_homSob}
\be {\lc W(\phi) \rc}^2 = {c}_H\int_{\R} \int_{\R} \dfrac{{\lln \phi(x+y) - \phi(x) \rrn}^2}{{|y|}^{2-2H}} dx dy. 
\eeq
	
The mapping $\phi \mapsto W(\phi)$ defined in $\mathcal{D}(\R)$ extends to a linear isometry between $\mathcal{H}$ and the Gaussian space spanned by $W$. This isometry will be denoted by
\beq\label{eq:integral_wiener}
W(\phi) = \int_{\R} \phi(x)W(dx).
\eeq

\begin{remark}\label{rem:mu}
Notice that the measure $\mu(d \xi) = c_H {|\xi|}^{1-2H} d\xi$ satisfies the following condition 
\begin{equation}\label{eq:mu}
\int_{\R} \frac{\mu(d\xi)}{1+|\xi|^{2(1-\alpha)}} < \infty, \quad\text{ for }\alpha < H.
\end{equation}
This relation will be crucial in order to see that $W$ belongs to a weighted Besov space in the proposition below.
\end{remark}

\begin{proposition}
For all $\kappa \in \left( 1-H, 1 \right)$ and every arbitrary $\si > 0$, W has a version in $\mathcal{B}_{\infty, \infty}^{-\kappa, \hat{w}_{\si}}$, where $\hat{w}_{\sigma}$ is given in Definition~\ref{def:weights} and $\mathcal{B}_{\infty, \infty}^{-\ka, \hat{w}_{\si}}$ is introduced in Definition~\ref{def:wt-besov}. In addition the random variable ${\|W\|}_{\mathcal{B}_{\infty, \infty}^{-\ka, \hat{w}_{\si}}}$ has moments of all orders. 
\end{proposition}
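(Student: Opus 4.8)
The plan is to show that for each dyadic block $\Delta_j W$ one has good control of $\|\Delta_j W\|_{L^\infty_{\hat w_\sigma}}$, and then sum the resulting bounds against the weight $2^{-\kappa j}$. Since $W$ is a Gaussian field, $\Delta_j W(x) = \mathcal{F}^{-1}(\chi_j \hat W)(x)$ is (after a suitable regularization using the mollifiers $l_\varepsilon$, which is needed because $W$ is only a distribution) a centered Gaussian random variable for each fixed $x$, so the first step is to compute its variance. Using the Fourier description \eqref{eq:cov_noise_Fourier} of the covariance, one gets formally
\[
\mathbf{E}\big[ |\Delta_j W(x)|^2 \big] = \int_{\R} |\chi_j(\xi)|^2 \, \mu(d\xi) = c_H \int_{\R} \chi(2^{-j}\xi)^2 \, |\xi|^{1-2H} \, d\xi = c_H \, 2^{j(2-2H)} \int_{\R} \chi(\eta)^2 |\eta|^{1-2H} d\eta,
\]
after the change of variables $\eta = 2^{-j}\xi$ and using that $\chi$ is supported in $[3/4,8/3]$ away from the origin, so the integral converges. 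Hence $\mathbf{E}[|\Delta_j W(x)|^2] \lesssim 2^{j(2-2H)}$, uniformly in $x$; this is exactly where Remark~\ref{rem:mu} (finiteness of $\int \mu(d\xi)/(1+|\xi|^{2(1-\alpha)})$ for $\alpha<H$) enters, guaranteeing the localized integrals make sense.

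Next I would upgrade this pointwise variance bound to a bound on the weighted supremum $\|\Delta_j W\|_{L^\infty_{\hat w_\sigma}} = \sup_x |\hat w_\sigma(x) \Delta_j W(x)|$. Because $\Delta_j W$ has Fourier support in an annulus of size $\sim 2^j$, it is a smooth function and a Bernstein-type inequality gives control of its derivatives in terms of powers of $2^j$ times its $L^\infty$ norm; combined with a Kolmogorov continuity / Garsia–Rodemich–Rumsey argument applied to the Gaussian field $x \mapsto \Delta_j W(x)$, and the Borell–TIS inequality for the supremum of a Gaussian process over (a countable dense subset of) a growing region, one obtains for every $p \ge 1$ a bound of the form $\mathbf{E}\big[ \|\Delta_j W\|_{L^\infty_{\hat w_\sigma}}^p \big]^{1/p} \lesssim 2^{j(1-H)} \, (1+j)^{1/2}$, where the logarithmic factor $(1+j)^{1/2}$ comes from the maximum of $\sim 2^j$-many correlated Gaussians and the polynomial weight $\hat w_\sigma(x) = |x|_\ast^{-\sigma}$ makes the supremum over all of $\R$ finite (it tames the at-most-polynomial spatial growth of $\Delta_j W$). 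The Gevrey regularity of $\chi$ — rather than mere smoothness — is what makes these estimates compatible with the stretched-exponential and polynomial weights, following \cite{mourrat-weber}.

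Finally, I would take the $L^p(\Omega)$ norm of the Besov norm and use Minkowski's inequality:
\[
\mathbf{E}\Big[ \|W\|_{\mathcal{B}_{\infty,\infty}^{-\kappa,\hat w_\sigma}}^p \Big]^{1/p}
= \mathbf{E}\Big[ \sup_{j\ge -1} \big( 2^{-\kappa j} \|\Delta_j W\|_{L^\infty_{\hat w_\sigma}} \big)^p \Big]^{1/p}
\le \Big( \sum_{j\ge -1} \mathbf{E}\big[ \big( 2^{-\kappa j} \|\Delta_j W\|_{L^\infty_{\hat w_\sigma}} \big)^p \big] \Big)^{1/p},
\]
and each summand is $\lesssim \big( 2^{(1-H-\kappa)j} (1+j)^{1/2} \big)^p$, so the series converges precisely because $\kappa > 1-H$, giving a finite $p$-th moment for every $p$. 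The existence of a version in $\mathcal{B}_{\infty,\infty}^{-\kappa,\hat w_\sigma}$ then follows by a standard Kolmogorov-type argument: one defines $W$ on the mollified approximations $W \ast l_\varepsilon$, shows this is Cauchy in $L^p(\Omega; \mathcal{B}_{\infty,\infty}^{-\kappa',\hat w_\sigma})$ for some $\kappa' < \kappa$ (using the embedding of Besov spaces with different smoothness indices) as $\varepsilon \to 0$, and identifies the limit with $W$. I expect the main obstacle to be the second step — namely, making the passage from the pointwise Gaussian variance estimate to the weighted $L^\infty$ (and thus Besov) estimate fully rigorous, since it requires simultaneously handling (a) the supremum of a Gaussian field producing the logarithmic correction, (b) the growth of $\Delta_j W$ in space balanced against the polynomial weight $\hat w_\sigma$, and (c) the fact that $W$ is a genuine distribution, so all computations must be carried out on regularizations and then shown to pass to the limit; the Gevrey/weighted Besov machinery of \cite{mourrat-weber} is designed exactly for this and should be invoked throughout.
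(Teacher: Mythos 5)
Your outline is essentially correct and would lead to the stated result, but it takes a genuinely different and considerably more laborious route than the paper. You attack the weighted $L^\infty$ norm of each Littlewood--Paley block head-on, which forces you into suprema of Gaussian fields over all of $\R$ (Borell--TIS, chaining/Kolmogorov, Bernstein, a logarithmic correction $(1+j)^{1/2}$, and a delicate balance between the growth of the field and the decay of $\hat w_\sigma$) --- precisely the step you flag as the main obstacle. The paper sidesteps all of this with a standard trick from the singular-SPDE literature: fix $\ka>\ka'>1-H$, estimate the $2q$-th moment of $\|\dot W\|_{\cb^{-\ka',\hat w_\si}_{2q,2q}}$ instead. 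By Fubini this reduces to $\sum_j 2^{-2qj\ka'}\int_{\R}\mathbf{E}\lc|W(K_{j,x})|^{2q}\rc\hat w_\si^{2q}(x)\,dx$, where Gaussian hypercontractivity gives $\mathbf{E}\lc|W(K_{j,x})|^{2q}\rc\le c_q\,\mathbf{E}^q\lc|W(K_{j,x})|^2\rc$; the only analytic input is then the same pointwise variance bound $\mathbf{E}\lc|W(K_{j,x})|^2\rc\lesssim 2^{2(1-\al)j}$ (with $\al<H$) that you compute, plus the integrability of $\hat w_\si^{2q}$ for $q$ large. The series converges since $\ka'>1-\al$, and one concludes via the continuous embedding $\cb_{2q,2q}^{-\ka',\hat w_\si}\hookrightarrow\cb_{\infty,\infty}^{-\ka,\hat w_\si}$ for $q$ large --- a purely deterministic Besov embedding that absorbs the loss $\ka-\ka'$ of regularity. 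So the trade-off is: your approach yields a sharper, more quantitative block-wise estimate but requires heavy Gaussian machinery to make rigorous; the paper's approach costs an arbitrarily small amount of regularity (harmless here since any $\ka>1-H$ is allowed) in exchange for reducing everything to a one-line variance computation. Your variance computation itself, and the final summation over $j$ using $\ka>1-H$, match the paper's.
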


\begin{proof}
Consider $\ka$, $\ka'$ such that $\ka > \ka' > 1-\al$, where $\al$ is defined by \eqref{eq:mu}. Due to  \eqref{eq:mu} observe that this implies that we can consider any $\ka > 1-H$. For $q \geq 1$, denote the Besov space $\mathcal{B}_{2q, 2q}^{-\ka', \hat{w}_{\si}}$ by $\mathcal{A}_q$. 
The proof relies on the fact that for large enough $q$, $\mathcal{A}_q$ is continuously embedded in $\mathcal{B}_{\infty, \infty}^{-\ka, \hat{w}_{\si}}$, i.e.,
\beq\label{eq:embedding_eq}
{\|\dot{W}\|}_{\cb_{\infty, \infty}^{-\ka, \hat{w}_{\si}}} \lesssim {\|\dot{W}\|}_{\mathcal{A}_q}.
\eeq
Hence it is enough to work with ${\|\dot{W}\|}_{\mathcal{A}_q}$. 

Let us now evaluate the quantity ${\|\dot{W}\|}_{\mathcal{A}_q}$. To this aim, notice that ${\Delta}_j f(x) = \left[ K_j \ast f \right](x)$ where $K_j(z) = 2^{jd} {\mathcal{F}}^{-1} \chi(2^j z)$. Therefore, using the notation $K_{j,x}(y)=K_j(x-y)$ we obtain:
\beq\label{eq:noise_diffe_sum_besov-1}
\mathbf{E}\left[ {\|\dot{W}\|}_{\mathcal{A}_q}^{2q} \right] = \sum_{j \geq -1} 2^{-2qj\kappa '} \int_{\R} \mathbf{E} \left[ {|W(K_{j,x})|^{2q}} \right] \hat{w}_{\sigma}^{2q}(x)dx.
\eeq
Using the fact that $W(K_{j,x})$ is Gaussian we thus have
$$
\mathbf{E}\left[ {|W(K_{j,x})|}^{2q} \right] \leq c_q \mathbf{E}^q \left[ {|W(K_{j,x})|}^2 \right].
$$ 
Consequently, \eqref{eq:noise_diffe_sum_besov-1} can be recast as:
\begin{equation}\label{eq:c}
\mathbf{E}\left[ {\|\dot{W}\|}_{\mathcal{A}_q}^{2q} \right] \leq c_q \sum_{j \geq -1} 2^{-2qj\kappa'} \int_{\R} \mathbf{E}^q \left[ {|W(K_{j,x})|}^2 \right] \hat{w}_{\si}^{2q}(x)dx.
\end{equation}
Now let us work with $\mathbf{E}\left[ {|W(K_{j,x})|}^2 \right]$. According to \eqref{eq:b} we have
$$
\mathbf{E}\left[ {|W(K_{j,x})|}^2 \right] =\int_{\R} |\mathcal{F}K_{j,x}(\xi)|^2 \mu(d\xi).
$$
Let us introduce a new measure $\nu$ on $\R$ defined by $\nu(d\xi) = \frac{\mu(d\xi)}{1+{|\xi|}^{2(1-\al)}}$. Notice that due to~\eqref{eq:mu}, $\nu$ is a finite measure.
Since $K_j = \mathcal{F}^{-1} \chi_j$ and the support of $\chi$ is in a closed interval, say $[a,b]$, we obtain:
\begin{align}\label{eq:d}
\mathbf{E}\left[ {|W(K_{j,x})|}^2 \right] &= \int_{\R}{\left\vert\chi(2^{-j} \xi)\right\vert}^2 \mu(d\xi) \leq \int_{\R} \mathbf{1}_{[0, 2^j b]}(|\xi|) \left( 1+{|\xi|}^{2(1-\al)} \right) \nu(d \xi)\\
&\leq \nu \left( \left[ 0, 2^j b \right]\right) (1+(2^j b)^{2(1-\al)}) \leq c_{\mu} 2^{2(1-\al)j}.
\end{align}

\noindent
Therefore plugging \eqref{eq:d} into \eqref{eq:c} and recalling that $1-\al < \ka' < \ka$, we get:
\begin{align}\label{eq:e}
\mathbf{E}\left[ {\|\dot{W}\|}_{\mathcal{A}_q}^{2q} \right] &\leq c_q \sum_{j \geq -1} 2^{-2qj\kappa'} \int_{\R} c_{\mu}^q 2^{(1-\al)jq} \hat{w}_{\si}^{2q} (x) dx \nonumber \\
&=C_{q,\mu} \left(\int_{\R} \hat{w}_{\si}^{2q}(x)dx\right) \sum_{j \geq -1} 2^{2qj(1-\al-\ka')}. 
\end{align}
Owing to the Definition~\ref{def:weights} of $\hat{w}_{\si}$, it is now readily checked that the right hand side of \eqref{eq:e} is convergent whenever $q$ is large enough.

Similar calculations as the ones leading to \eqref{eq:e} also show that the random variable ${\|W\|}_{\cb_{\infty, \infty}^{-\ka, \hat{w}_{\si}}}$ has moments of all orders. 
\end{proof}

\section{Pathwise solution}
Now that we have proved that our noise $\dot{W}$ is almost surely an element of $\cb_{\infty, \infty}^{-\ka, \hat{w}_{\si}}$, we will transform our stochastic eq.~\eqref{eq:she-1} into a deterministic one, which will be solved in the Riemann-Stieltjes sense. We first label an assumption on a general distribution driving the heat equation.
\begin{hypothesis}\label{hyp:pathwise_W}
Let $\delta \in (0,1)$ be a fixed constant and $\si > 0$ be an arbitrarily small constant. We consider a distribution $\mathscr{W}$ on $\R$ such that $\mathscr{W} \in {\mathcal{B}}^{-\ka, \hat{w}_{\si \der}}_{\infty, \infty}$ with $\ka \in (0,1)$.
\end{hypothesis}
\begin{remark}
The constant $\delta \in (0,1)$ in Hypothesis~\ref{hyp:pathwise_W} is related to the exponential weights in Definition~\ref{def:weights}.
\end{remark}
We now introduce the notion of solution for equation~\eqref{eq:she-1} which will be considered in the sequel.
\begin{definition}
Let $\mathscr{W}$ be a distribution satisfying Hypothesis~\ref{hyp:pathwise_W}. Let $u \in \cac_{q}^{\ka_u, \la, \si}$ for $\la, \si > 0$ and $\ka_u \in (\ka, 1)$, where $\cac_{q}^{\ka_u, \la, \si}$ is introduced in Notation~\ref{not:c_p}. Consider an initial condition $u_0 \in \mathcal{B}^{\ka_u, v_0}_{q, p}$ where we recall $v_t = w_{\la + \si t}$. We say that $u$ is a mild solution to equation 
\begin{equation}\label{eq:she}
\dfrac{\partial u}{\partial t} = \frac{1}{2} \Delta u + u \mathscr{W}
\end{equation}
with initial condition $u_0$, if it satisfies the following integral equation 
\begin{equation}\label{eq:integral-form}
u_t = p_t u_0 + \int_0^t p_{t-s}(u_s \mathscr{W}) ds.
\end{equation}
\end{definition}
\begin{remark}
In \eqref{eq:integral-form}, we implicitly assume that the product of distributions $u \cdot \mathscr{W}$ is well defined. This will be treated in the forthcoming Lemma~\ref{lem:fact4}. 
\end{remark}
Before we can solve equation~\eqref{eq:integral-form}, we list a few results which would prove useful later. The first one recalls the action of the heat semigroup on weighted Besov spaces.
\begin{lemma}\label{lem:fact1}
The following smoothing effect of the heat flow is valid in Besov spaces:
Let $\hat{\ka} \geq \ka$ be real numbers, $\ga_0 > 0$ and $q \in [1, \infty]$. Then there exists $C < \infty$ such that uniformly over $\ga \leq \ga_0$ and $t > 0$,
\begin{equation*}
{\|p_t f\|}_{\mathcal{B}_{q,\infty}^{\hat{\ka}, w_{\ga}}} \leq C t^{- \frac{\hat{\ka}-\ka}{2}} {\|f\|}_{\mathcal{B}_{q,\infty}^{\ka, w_{\ga}}}
\end{equation*}
\end{lemma}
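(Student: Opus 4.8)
The plan is to reduce the estimate to a single block-level bound on the Fourier multiplier associated with the heat semigroup, and then reassemble the Besov norms. First I would recall that for $u\in\cs(\R)$ one has $\Delta_j(p_t u)=\cf^{-1}\bigl(e^{-t|\xi|^2/2}\chi_j(\xi)\hat u(\xi)\bigr)$, so that $\Delta_j(p_t u)=\psi_{j,t}\ast\Delta_j' u$, where $\Delta_j'$ is a slightly fattened block (using $\tilde\chi+\sum_k\chi_k=1$ and the fact that $\chi_j$ is supported where finitely many neighboring $\chi_k$ live, we can write $\chi_j=\chi_j\tilde\chi_j$ for a bump $\tilde\chi_j$ equal to $1$ on $\mathrm{Supp}\,\chi_j$), and $\psi_{j,t}=\cf^{-1}\bigl(e^{-t|\xi|^2/2}\tilde\chi_j\bigr)$. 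The point of the Gevrey partition of unity (Proposition~\ref{prop:unity_partition}) is precisely that $\tilde\chi_j$, hence $\psi_{j,t}$, enjoys good decay so that $\psi_{j,t}$ is an honest integrable kernel whose $L^1$ mass — against the weight — can be controlled.

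The core estimate I would establish is the weighted Bernstein-type bound
\[
{\|p_t \Delta_j f\|}_{L^p_{w_\ga}} \le C\, e^{-c\,t\,2^{2j}}\, {\|\Delta_j f\|}_{L^p_{w_\ga}}
\]
for $j\ge 0$, uniformly over $\ga\le\ga_0$ and $t>0$, together with the trivial bound ${\|p_t\Delta_{-1}f\|}_{L^p_{w_\ga}}\lesssim{\|\Delta_{-1}f\|}_{L^p_{w_\ga}}$ for the low-frequency block. To get this one writes $w_\ga(x)(p_t\Delta_j f)(x)=\int \psi_{j,t}(x-y)\,\frac{w_\ga(x)}{w_\ga(y)}\,w_\ga(y)\Delta_j f(y)\,dy$ and uses that for the stretched-exponential weights $w_\ga(x)=e^{-\ga|x|_\ast^\der}$ with $\der\in(0,1)$ one has the almost-multiplicativity/slowly-varying property $\frac{w_\ga(x)}{w_\ga(y)}\le C\,e^{\ga\, c\,|x-y|^\der}\le C'\,e^{C'|x-y|}$ uniformly in $\ga\le\ga_0$ (here $|x|_\ast^\der-|y|_\ast^\der\lesssim |x-y|^\der$ since $t\mapsto t^\der$ is $\der$-Hölder, and $\der<1$ makes this sublinear); then a weighted Young inequality gives the factor $\int|\psi_{j,t}(z)|\,e^{C'|z|}dz$, which by the Gevrey decay of $\psi_{j,t}$ and a rescaling $\xi=2^j\eta$ is bounded by $C\,e^{-c\,t\,2^{2j}}$ — the exponential gain comes from $e^{-t|\xi|^2/2}$ evaluated on $|\xi|\sim 2^j$, and the $e^{C'|z|}$ weight only costs a fixed constant because $\tilde\chi_j$ is compactly supported in an annulus of size $2^j$ so $\psi_{j,t}$ has Gevrey (hence faster than any polynomial, controllably exponential) decay at scale $2^{-j}$.

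Once this per-block estimate is in hand, the lemma follows by summation:
\[
{\|p_t f\|}_{\cb_{q,\infty}^{\hat\ka,w_\ga}}=\sup_{j\ge-1}2^{\hat\ka j}{\|\Delta_j(p_t f)\|}_{L^q_{w_\ga}}\le C\,\sup_{j\ge-1}2^{(\hat\ka-\ka)j}e^{-c\,t\,2^{2j}}\cdot 2^{\ka j}{\|\Delta_j f\|}_{L^q_{w_\ga}},
\]
and then $\sup_{j\ge-1}2^{(\hat\ka-\ka)j}e^{-c\,t\,2^{2j}}\le C\,t^{-(\hat\ka-\ka)/2}$ since $\hat\ka\ge\ka$ implies $\sup_{x\ge 0}x^{(\hat\ka-\ka)/2}e^{-cx}<\infty$ after the substitution $x=t\,2^{2j}$ (for $j=-1$ one checks the bound separately, trivially since $t^{-(\hat\ka-\ka)/2}$ is the dominant behavior only for small $t$ while for large $t$ the exponential wins). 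I expect the main obstacle to be the weighted Young estimate: one must verify carefully that the ratio $w_\ga(x)/w_\ga(y)$ is controlled by an exponential in $|x-y|$ uniformly in $\ga\le\ga_0$ (this is exactly where $\der<1$ and the boundedness of $\ga$ are used), and that the Gevrey regularity from Proposition~\ref{prop:unity_partition} is strong enough that $\int|\psi_{j,t}(z)|e^{C'|z|}dz$ stays finite and, crucially, retains the full exponential decay $e^{-ct2^{2j}}$ in $t$ rather than being degraded by the weight — this is a standard but delicate point in the Mourrat–Weber framework \cite{mourrat-weber}, and I would follow their treatment closely.
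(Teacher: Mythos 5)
The paper offers no proof of its own here: it simply cites \cite[Proposition 3.11]{mourrat-weber}, and your architecture (write $\Delta_j(p_tf)=\psi_{j,t}\ast\Delta_jf$ with $\psi_{j,t}=\mathcal{F}^{-1}(e^{-t|\xi|^2/2}\tilde\chi_j)$, prove a weighted per-block bound with gain $e^{-ct2^{2j}}$, then sum using $\sup_{x\ge0}x^{(\hat{\ka}-\ka)/2}e^{-cx}<\infty$) is exactly the strategy of that reference. However, one step in your core estimate fails as written. You weaken the weight ratio from $w_\ga(x)/w_\ga(y)\le Ce^{c\ga|x-y|^{\der}}$ to $C'e^{C'|x-y|}$ and then assert that $\int|\psi_{j,t}(z)|e^{C'|z|}\,dz$ is controlled by the Gevrey decay of $\psi_{j,t}$. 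It is not: the multiplier $e^{-t|\xi|^2/2}\tilde\chi_j(\xi)$ is compactly supported and only Gevrey-$\theta$ with $\theta>1$ (a compactly supported function cannot be analytic), so by Paley--Wiener its inverse Fourier transform cannot decay exponentially; the optimal decay is of stretched-exponential type $\exp(-c|2^jz|^{1/\theta})$ with $1/\theta<1$. Hence $\int|\psi_{j,t}(z)|e^{C'|z|}\,dz$ may be infinite, and the exponential weakening destroys the argument precisely at the point you yourself flag as delicate.

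The repair is to \emph{not} perform that weakening: keep the uniform bound $w_\ga(x)/w_\ga(y)\le Ce^{c\ga_0|x-y|^{\der}}$ and invoke the standing assumption $\theta\in(1,1/\der)$ from Definition~\ref{def:weights}, which gives $\der<1/\theta$, so the kernel decay $\exp(-c|2^jz|^{1/\theta})$ dominates the weight growth $\exp(c\ga_0|z|^{\der})$ and the weighted $L^1$ norm of $\psi_{j,t}$ is finite while retaining the factor $e^{-ct2^{2j}}$. This interplay between the Gevrey index $\theta$ and the weight exponent $\der$ is the entire reason the partition of unity is taken in $\mathcal{G}_c^{\theta}$ rather than merely in $C_c^\infty$; your write-up names the right obstacle but then discards the quantitative information that resolves it. A secondary point: your treatment of the block $j=-1$ for large $t$ is not ``trivial'' --- there is no exponential gain at frequency zero, so obtaining the decaying factor $t^{-(\hat{\ka}-\ka)/2}$ as $t\to\infty$ (if one really insists on uniformity over all $t>0$ rather than $t\in(0,T]$) requires a separate argument exploiting the integrability of the weight $w_\ga$.
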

\begin{proof}
See \cite[Proposition 3.11]{mourrat-weber}.
\end{proof}
We now give a result on comparison of Besov norms for different weights $w$.
\begin{lemma}\label{lem:fact2}
Let $w_1, w_2 \in \mathcal{W}$ be such that $w_1 \leq w_2$. Then for every $f \in \mathcal{B}_{p,q}^{\ka, w_2}$ we have
\begin{equation*}
{\|f\|}_{\mathcal{B}_{p,q}^{\ka, w_1}} \leq {\|f\|}_{\mathcal{B}_{p,q}^{\ka, w_2}}
\end{equation*}
\end{lemma}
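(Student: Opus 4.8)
The plan is to work directly from the definition of the weighted Besov norm in \eqref{eq:wt_Besov_norm}, reducing everything to a pointwise comparison of the two weights. Recall that ${\|f\|}_{\mathcal{B}_{p,q}^{\ka, w}}$ is built from the quantities $2^{\ka j}{\|\Delta_j f\|}_{L^p_w}$, and that by the Remark following Definition~\ref{def:wt-besov} we have ${\|g\|}_{L^p_w} = {\|g w\|}_{L^p(\R)}$ for any $g$. So the heart of the matter is to compare ${\|(\Delta_j f)\, w_1\|}_{L^p(\R)}$ with ${\|(\Delta_j f)\, w_2\|}_{L^p(\R)}$.

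First I would observe that since $w_1, w_2 \in \mathcal{W}$ are genuine (strictly positive) weight functions with $w_1(x) \leq w_2(x)$ for every $x \in \R$, we have the pointwise bound $|g(x)| w_1(x) \leq |g(x)| w_2(x)$ for all $x$, hence $|g(x) w_1(x)|^p \leq |g(x) w_2(x)|^p$ (for $p < \infty$; the case $p = \infty$ is the analogous pointwise supremum comparison). Integrating over $\R$ and taking $p$-th roots gives ${\|g w_1\|}_{L^p(\R)} \leq {\|g w_2\|}_{L^p(\R)}$, i.e. ${\|g\|}_{L^p_{w_1}} \leq {\|g\|}_{L^p_{w_2}}$, for every measurable $g$. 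Applying this with $g = \Delta_j f$ yields, for each $j \geq -1$,
\begin{equation*}
2^{\ka j}{\|\Delta_j f\|}_{L^p_{w_1}} \leq 2^{\ka j}{\|\Delta_j f\|}_{L^p_{w_2}}.
\end{equation*}

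Finally I would plug this into \eqref{eq:wt_Besov_norm}: since the $\ell^q$ norm over $j$ is monotone in each coordinate, taking the $\ell^q$ sum of the left-hand side is bounded by the $\ell^q$ sum of the right-hand side, which gives exactly ${\|f\|}_{\mathcal{B}_{p,q}^{\ka, w_1}} \leq {\|f\|}_{\mathcal{B}_{p,q}^{\ka, w_2}}$; in particular the right-hand side being finite (i.e. $f \in \mathcal{B}_{p,q}^{\ka, w_2}$) forces the left-hand side to be finite as well. There is no real obstacle here — the only point requiring a word of care is the convention ${\|\cdot\|}_{L^p_w} = {\|\cdot\, w\|}_{L^p}$ from the Remark (rather than the measure-theoretic $L^p(w\,dx)$), but the monotonicity argument is identical under either convention since $0 < w_1 \leq w_2$, and the $\ell^q$-summation step is completely routine.
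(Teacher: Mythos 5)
Your proof is correct and is exactly the argument the paper has in mind: the paper simply states that the lemma ``follows easily from Definition~\ref{def:wt-besov}'', and your pointwise comparison of the weights followed by monotonicity of the $L^p$ and $\ell^q$ norms is the standard way to fill that in.
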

\begin{proof}
Follows easily from Definition~\ref{def:wt-besov}.
\end{proof}
Our next preliminary lemma is an elementary comparison between the weights corresponding to Definition~\ref{def:weights}.
\begin{lemma}\label{lem:fact3}
Recall that the weight $v_t = w_{\la + \si t}$ has been defined for $t \geq 0$ in Notation~\ref{not:c_p}. Then for $0 \leq s < t$ we have
\begin{equation*}
v_t \leq c_{\si} {|t-s|}^{-\si} v_s \hat{w}_{\der \si}
\end{equation*}
\end{lemma}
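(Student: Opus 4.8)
The plan is to reduce everything to a pointwise inequality between the exponential weights. Recall that $v_t(x) = w_{\la+\si t}(x) = e^{-(\la+\si t){|x|}_{\ast}^{\der}}$ and similarly for $v_s$, while $\hat{w}_{\der\si}(x) = {|x|}_{\ast}^{-\der\si}$. Dividing, the claimed bound $v_t \le c_\si {|t-s|}^{-\si} v_s \hat{w}_{\der\si}$ is equivalent to
\begin{equation*}
e^{-\si(t-s){|x|}_{\ast}^{\der}} \le c_\si {|t-s|}^{-\si} {|x|}_{\ast}^{-\der\si},
\end{equation*}
i.e., setting $h = t-s > 0$ and $r = {|x|}_{\ast}^{\der} \ge 1$, to $e^{-\si h r} \le c_\si (hr)^{-\si}$, or equivalently $(hr)^{\si} e^{-\si h r} \le c_\si$. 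So the whole statement follows from the elementary fact that the function $y \mapsto y^{\si} e^{-\si y}$ is bounded on $(0,\infty)$: first I would observe it is continuous, tends to $0$ as $y \to 0^+$ and as $y \to \infty$, hence attains a finite maximum $c_\si := \sup_{y>0} y^\si e^{-\si y}$ (in fact the maximum is at $y=1$, giving $c_\si = e^{-\si}$, but any finite bound suffices). Applying this with $y = hr = {|t-s|}\,{|x|}_{\ast}^{\der}$ gives
\begin{equation*}
{|t-s|}^{\si}{|x|}_{\ast}^{\der\si} e^{-\si {|t-s|}{|x|}_{\ast}^{\der}} \le c_\si,
\end{equation*}
which rearranges to exactly the pointwise inequality above, valid for every $x \in \R$.

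Once the pointwise bound $v_t(x) \le c_\si {|t-s|}^{-\si} v_s(x) \hat{w}_{\der\si}(x)$ is in hand, the inequality between weights holds verbatim, and there is nothing further to prove since the statement of the lemma is phrased at the level of the weight functions themselves. If instead one wanted the consequence at the level of Besov norms, one would simply combine this pointwise domination with the monotonicity of weighted Besov norms (Lemma~\ref{lem:fact2}), but that is not needed for the statement as written.

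There is essentially no obstacle here: the only mild point of care is checking that the product $v_s \hat{w}_{\der\si}$ still belongs to the admissible class $\mathcal{W}$ of weights, so that writing $v_s \hat{w}_{\der\si}$ makes sense in the framework of Definition~\ref{def:weights} — but a product of a stretched-exponential weight and a polynomial weight is again dominated by (and comparable to) a stretched-exponential weight of the same exponent $\der$, so this causes no trouble. The argument is purely a one-variable calculus estimate on $y^\si e^{-\si y}$ together with the fact that $\der > 0$ makes ${|x|}_{\ast}^{\der}$ range over $[1,\infty)$.
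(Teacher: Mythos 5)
Your proof is correct and follows essentially the same route as the paper: both reduce the claim to the pointwise identity $v_t = v_s e^{-\si(t-s){|x|}_{\ast}^{\der}}$ and then invoke the elementary bound $y^{\si}e^{-\si y}\le c_{\si}$ on $(0,\infty)$ (stated in the paper as $x^{\al}e^{-sx}\le c_{\al}s^{-\al}$). Nothing is missing.
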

\begin{proof}
For $0\leq s < t$, observe that $v_t = v_s e^{-\si (t-s) {|x|}_{\ast}^{\der}}$. Then we use the fact that there exists a constant $c_{\al}$ such that 
\begin{equation*}
0 \leq x^{\al} e^{-sx} \leq \dfrac{c_{\al}}{{s}^{\al}}~\text{ for }~x,\al,s \in \R_{+}
\end{equation*}
Consequently $e^{-\si(t-s){|x|}_{\ast}^{\der}} \leq c_{\si} {|t-s|}^{-\si} {|x|}_{\ast}^{-\si \der}$ which implies $v_t \leq c_{\si} {|t-s|}^{-\si} v_s \hat{w}_{\si \der}$ 
\end{proof}
Let us recall the definition of products of distributions within the weighted Besov spaces framework.
\begin{lemma}\label{lem:fact4}
Let $\al < 0 < \beta$ be such that $\al + \beta > 0$. In addition, consider $p,q \in [1, \infty]$ and $\nu \in [0,1]$. Let $p_1, p_2 \in [1, \infty]$ be such that 
\begin{equation*}
\dfrac{1}{p_1} = \dfrac{\nu}{p} ~\text{ , }~ \dfrac{1}{p_2} = \dfrac{1-\nu}{p} \quad\text{ and }\quad w=w_{\ga} \hat{w}_{\si}.
\end{equation*}
Then the mapping $(f,g) \mapsto fg$ can be extended to a continuous linear map from $\mathcal{B}_{p_1,q}^{\al, w_\ga} \times \mathcal{B}_{p_2,q}^{\beta, \hat{w}_{\si}}$  to $\mathcal{B}_{p,q}^{\al, w}$. Moreover there exists a constant $C$ such that
\begin{equation*}
{\|fg\|}_{\mathcal{B}_{p,q}^{\al, w}} \leq C {\|f\|}_{\mathcal{B}_{p_1, q}^{\al, w_{\ga}}} {\|g\|}_{\mathcal{B}_{p_2, q}^{\beta, \hat{w}_{\si}}}.
\end{equation*}
\end{lemma}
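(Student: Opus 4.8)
The plan is to run the classical Bony paraproduct argument inside the weighted Littlewood--Paley calculus of \cite{mourrat-weber}. For $f,g\in\cs(\R)$, where $fg$ is already unambiguously defined, I would start from the decomposition
\begin{equation*}
fg = \pi_\prec(f,g) + \pi_\circ(f,g) + \pi_\succ(f,g),
\end{equation*}
where $\pi_\prec(f,g) = \sum_{j\ge -1} S_{j-1}f\,\Delta_j g$ with $S_{j-1} = \sum_{i\le j-2}\Delta_i$, where $\pi_\succ(f,g) = \pi_\prec(g,f)$, and where $\pi_\circ(f,g) = \sum_{|i-j|\le 1}\Delta_i f\,\Delta_j g$ is the resonant term. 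I would then estimate the three pieces separately in $\mathcal B_{p,q}^{\al,w}$ and add up.

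For each piece the mechanism is the same. First, the summand indexed by $j$ has Fourier support in a ball (low--high paraproduct, resonant term) or in a dyadic annulus of radius comparable to $2^j$, so that $\Delta_k$ of the product only feels the summands with $|j-k|\le N$ for the two paraproducts, resp. with $j\ge k-N$ for the resonant term, $N$ a fixed integer. Second, on a single summand one uses the convention $\|h\|_{L^p_w}=\|hw\|_{L^p}$, the factorization $w=w_\ga\hat w_\si$ and H\"older's inequality: since $\tfrac1{p_1}+\tfrac1{p_2}=\tfrac\nu p+\tfrac{1-\nu}p=\tfrac1p$ one gets $\|h_1h_2\|_{L^p_w}\le\|h_1\|_{L^{p_1}_{w_\ga}}\|h_2\|_{L^{p_2}_{\hat w_\si}}$, which is precisely where the numerology of $p_1,p_2,\nu$ in the statement enters. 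Third, one invokes the weighted block estimates of \cite{mourrat-weber}: $\|\Delta_kh\|_{L^p_w}\lesssim\|h\|_{L^p_w}$ uniformly in $k$, $\|\Delta_jf\|_{L^{p_1}_{w_\ga}}\lesssim 2^{-j\al}\|f\|_{\mathcal B^{\al,w_\ga}_{p_1,q}}$, and --- because $\al<0$ --- $\|S_{j-1}f\|_{L^{p_1}_{w_\ga}}\lesssim 2^{-j\al}\|f\|_{\mathcal B^{\al,w_\ga}_{p_1,q}}$ by a geometric summation, while --- because $\beta>0$ --- $\|S_{j-1}g\|_{L^{p_2}_{\hat w_\si}}\lesssim\|g\|_{\mathcal B^{\beta,\hat w_\si}_{p_2,q}}$ uniformly in $j$. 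Combining these ingredients, for $\pi_\prec(f,g)$ and $\pi_\circ(f,g)$ the regularity exponents add: one obtains $2^{(\al+\beta)k}\|\Delta_k(\,\cdot\,)\|_{L^p_w}$ bounded by a discrete convolution against the kernel $2^{(\al+\beta)m}\mathbf 1_{\{m\le N\}}$ (summable exactly because $\al+\beta>0$, which is where that hypothesis is used) applied to an $\ell^q$ sequence built from the block norms of $f$ and $g$, so that $\pi_\prec(f,g),\pi_\circ(f,g)\in\mathcal B^{\al+\beta,w}_{p,q}$. For $\pi_\succ(f,g)=\sum_j S_{j-1}g\,\Delta_jf$ the coarse factor is $g$ with $\beta>0$, so one only recovers $2^{\al k}\|\Delta_k(\,\cdot\,)\|_{L^p_w}$ bounded by a finite convolution of an $\ell^q$ sequence, giving $\pi_\succ(f,g)\in\mathcal B^{\al,w}_{p,q}$. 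Taking $\ell^q$ norms over $k$ with Young's inequality for sequences, and using the embedding $\mathcal B^{\al+\beta,w}_{p,q}\hookrightarrow\mathcal B^{\al,w}_{p,q}$ (legitimate because $\beta>0$ and the weight is unchanged), yields $\|fg\|_{\mathcal B^{\al,w}_{p,q}}\le C\|f\|_{\mathcal B^{\al,w_\ga}_{p_1,q}}\|g\|_{\mathcal B^{\beta,\hat w_\si}_{p_2,q}}$; a density argument then upgrades $(f,g)\mapsto fg$ to a continuous bilinear map on the full Besov spaces.

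The part I expect to cost real work is not the scale-counting above --- which is classical already in the unweighted setting --- but rather the third ingredient: the weighted block estimates must hold with constants uniform in $j$, i.e. one must be able to move the weight through a convolution with the Littlewood--Paley kernel $K_j(z)=2^{j}\mathcal F^{-1}\chi(2^jz)$ at the cost of only a bounded factor. This is exactly why the partition of unity is taken in the Gevrey class $\cg_c^\theta$ in Proposition~\ref{prop:unity_partition}: the kernels $K_j$ then enjoy a stretched-exponential decay of the form $|K_j(z)|\lesssim 2^{j}e^{-c(2^j|z|)^{1/\theta}}$, which is compatible with the polynomial weights $\hat w_\si$ and --- thanks to the constraint $\theta<1/\der$ built into Definition~\ref{def:weights} --- also with the stretched-exponential weights $w_\ga$, since $w_\ga(x)/w_\ga(y)\lesssim e^{c'\ga|x-y|^\der}$ and $\int_\R e^{-c|u|^{1/\theta}}e^{c'\ga\,2^{-j\der}|u|^\der}\,du$ is finite, and bounded uniformly over $j\ge0$, precisely when $\der<1/\theta$. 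All of these weighted estimates are available in \cite{mourrat-weber} (they are the ones underlying Lemma~\ref{lem:fact1}); once they are quoted, the product estimate reduces to the bookkeeping described above.
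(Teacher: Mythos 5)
Your proposal is correct and follows essentially the same route as the paper, which simply defers to the paraproduct estimate of Mourrat--Weber (their Corollary 3.21): the Bony decomposition into $\pi_\prec$, $\pi_\circ$, $\pi_\succ$, the weighted H\"older step exploiting $w=w_\ga\hat w_\si$ and $\tfrac{1}{p_1}+\tfrac{1}{p_2}=\tfrac{1}{p}$, the use of $\al<0$, $\beta>0$ and $\al+\beta>0$ exactly where you place them, and the Gevrey-kernel weighted block estimates are precisely the ingredients of that cited proof. Your identification of the uniform weighted convolution bounds (requiring $\der<1/\theta$) as the only genuinely non-classical ingredient is also accurate.
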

\begin{proof}
The proof is similar to that of \cite[Corollary 3.21]{mourrat-weber}.
\end{proof}
We also include the following extension of Gronwall's Lemma taken from \cite[Lemma 15]{dalang} which will be required in order to show existence of our solution.
\begin{lemma}\label{lem:gronwall}
Let $g:[0,T] \mapsto \R_{+}$ be a non-negative function such that $\int_0^T g(s)ds < \infty$. Let $\lp f_n, n\in \N \rp$ be a sequence of non-negative functions on $[0,T]$ and $k_1, k_2$ be non-negative numbers such that for $0 \leq t \leq T$,
\begin{equation}
f_n(t) \leq k_1 + \int_0^t (k_2 + f_{n-1}(s))g(t-s) ds.
\end{equation}
If $\sup_{0 \leq s \leq T} f_0(s) < \infty$, then $\sup_{n \geq 0} \sup_{0 \leq t \leq T} f_n(t) < \infty$, and if $k_1=k_2=0$, then $\sum_{n \geq 0} f_n(t)$ converges uniformly on $[0,T]$.
\end{lemma}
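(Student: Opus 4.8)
The plan is to convert the recursive inequality into a single estimate involving the iterated convolutions of $g$, and then to reduce both assertions to one summability statement about those convolutions. Write $(g\ast h)(t)=\int_0^t g(t-s)h(s)\,ds$, let $G(t)=\int_0^t g(s)\,ds$, denote by $g^{\ast j}$ the $j$-fold convolution of $g$ (with the convention $g^{\ast 0}\ast h:=h$), and set $M_0:=\sup_{s\in[0,T]}f_0(s)<\infty$. The hypothesis reads $f_n\le a+g\ast f_{n-1}$ for $n\ge 1$, where $a$ is the fixed non-negative function $a(t)=k_1+k_2G(t)$, bounded by $k_1+k_2G(T)$ on $[0,T]$. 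Iterating this $n$ times down to $f_0$ and using associativity of the convolution together with $g^{\ast j}\ast G=g^{\ast(j+1)}\ast\1$, $f_0\le M_0$ and the positivity of all terms, I obtain, for every $t\in[0,T]$,
$$ f_n(t)\ \le\ \sum_{j=0}^{n-1}(g^{\ast j}\ast a)(t)+(g^{\ast n}\ast f_0)(t)\ \le\ (k_1+k_2)\sum_{j=0}^{\infty}\int_0^t g^{\ast j}(s)\,ds\ +\ M_0\int_0^t g^{\ast n}(s)\,ds, $$
where $\int_0^t g^{\ast 0}(s)\,ds$ is read as $1$.

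The crux is then to prove that $S:=\sum_{j\ge 0}\int_0^T g^{\ast j}(s)\,ds<\infty$; equivalently, with the finite measure $\mu(ds)=g(s)\,ds$ on $[0,\infty)$, that $\sum_{j\ge 0}\mu^{\ast j}([0,T])<\infty$. Here the naive factorial bound $\int_0^t g^{\ast j}\le G(t)^j/j!$ is \emph{not} available for a general non-negative $g\in L^1$ (it already fails for $g=\1_{[0,1]}$), so I would argue via the Laplace transform. Since $e^{-\lambda s}g(s)\le g(s)\in L^1[0,T]$ and $e^{-\lambda s}\to 0$ for every $s>0$, dominated convergence yields $\hat\mu(\lambda)=\int_0^T e^{-\lambda s}g(s)\,ds\to 0$ as $\lambda\to\infty$; fix $\lambda>0$ with $\theta:=\hat\mu(\lambda)<1$. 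Then $\widehat{\mu^{\ast j}}(\lambda)=\theta^{\,j}$, while $\widehat{\mu^{\ast j}}(\lambda)\ge e^{-\lambda T}\mu^{\ast j}([0,T])$, so $\mu^{\ast j}([0,T])\le e^{\lambda T}\theta^{\,j}$ and $S\le e^{\lambda T}/(1-\theta)<\infty$.

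With these two ingredients the conclusions follow immediately. For the uniform bound, the displayed inequality and $\int_0^t g^{\ast n}(s)\,ds\le S$ give $\sup_{n\ge 0}\sup_{t\in[0,T]}f_n(t)\le(k_1+k_2+M_0)\,S<\infty$. When $k_1=k_2=0$ we have $a\equiv 0$, hence $f_n(t)\le(g^{\ast n}\ast f_0)(t)\le M_0\,\mu^{\ast n}([0,T])\le M_0 e^{\lambda T}\theta^{\,n}$ uniformly in $t\in[0,T]$, and the Weierstrass $M$-test shows $\sum_{n\ge 0}f_n(t)$ converges uniformly on $[0,T]$.

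The only genuinely delicate step is the summability $S<\infty$: because the classical $1/j!$ gain is unavailable, one must replace it by an exponential-moment (Laplace transform) argument, which is exactly where the finiteness of $T$ and the mere integrability of $g$ are used. The rest is routine bookkeeping with non-negative convolutions.
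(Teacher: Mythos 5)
The paper offers no proof of this lemma: it is quoted verbatim from Dalang's Lemma 15 and used as a black box, so there is no internal argument to compare yours against. Your proof is correct and self-contained. The reduction to $f_n\le\sum_{j=0}^{n-1}g^{\ast j}\ast a+g^{\ast n}\ast f_0$ is sound (monotonicity and associativity of convolution against non-negative kernels), and you correctly identify the one genuinely non-trivial point, namely the summability $S=\sum_{j\ge0}\mu^{\ast j}([0,T])<\infty$ for $\mu(ds)=g(s)\,ds$. Your caution that the naive bound $\int_0^t g^{\ast j}\le G(t)^j/j!$ fails for general $g\in L^1$ is well placed and directly relevant here: in Proposition~\ref{prop:exi+uni} the lemma is applied with $g(s)=C\,s^{-(\ka_u+\ka)/2-\si}$, a power singularity for which the factorial gain is indeed unavailable (e.g.\ $s^{-1/2}\ast s^{-1/2}\equiv\pi$). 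The Laplace-transform argument — choose $\la$ with $\hat\mu(\la)<1$ by dominated convergence, then $\mu^{\ast j}([0,T])\le e^{\la T}\hat\mu(\la)^j$ — is the standard renewal-theoretic fix and is carried out correctly; Dalang's original proof establishes the same summability of the iterated kernels by an equivalent device, so your route is essentially the canonical one. Both conclusions (the uniform bound, and the geometric decay hence Weierstrass $M$-test when $k_1=k_2=0$) then follow exactly as you state.
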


We are ready to state our main result about existence and uniqueness of solution for our abstract heat equation~\eqref{eq:she}. 
\begin{proposition}\label{prop:exi+uni}
Let $\mathscr{W}$ be a distribution as in Hypothesis \ref{hyp:pathwise_W}. Consider $\la > 0$ and $q \geq 1$. Then there exists a unique solution to equation~\eqref{eq:integral-form} lying in $\mathcal{C}_{q}^{\ka_u, \la, \si}$ where $\ka_u \in (\ka,1)$ and where $\cac_p^{\ka_u, \la, \si}$ is defined in Notation~\ref{not:c_p}.
\end{proposition}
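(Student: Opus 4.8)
The plan is to solve the integral equation~\eqref{eq:integral-form} by a Picard iteration in the space $\mathcal{C}_{q}^{\ka_u,\la,\si}$ over a fixed time horizon $[0,T]$, the linearity of the equation being handled by the singular Gronwall lemma of Dalang (Lemma~\ref{lem:gronwall}). Concretely, set $u^{0}_{t}=p_{t}u_{0}$ and define recursively
\[ u^{n+1}_{t}=p_{t}u_{0}+\int_{0}^{t}p_{t-s}\bigl(u^{n}_{s}\,\mathscr{W}\bigr)\,ds=:\Gamma(u^{n})_{t}. \]
I would first record that the free term is controlled uniformly on $[0,T]$: since $v_{t}\le v_{0}$ for $t\ge 0$, Lemma~\ref{lem:fact2} together with Lemma~\ref{lem:fact1} (applied with $\hat\ka=\ka=\ka_u$, i.e.\ no smoothing) gives $\|p_{t}u_{0}\|_{\mathcal{B}_{q,\infty}^{\ka_u,v_{t}}}\le C\|u_{0}\|_{\mathcal{B}_{q,p}^{\ka_u,v_{0}}}$.

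The heart of the matter is a single estimate on the Duhamel integrand, obtained by chaining the preliminary lemmas. Fix $0\le s<t$. Lemma~\ref{lem:fact4} shows that the product $u^{n}_{s}\,\mathscr{W}$ is a well-defined distribution in $\mathcal{B}_{q,\infty}^{-\ka,v_{s}\hat{w}_{\si\der}}$ --- here the hypothesis $\ka_u>\ka$ guarantees $-\ka+\ka_u>0$ --- with
\[ \bigl\|u^{n}_{s}\,\mathscr{W}\bigr\|_{\mathcal{B}_{q,\infty}^{-\ka,v_{s}\hat{w}_{\si\der}}}\ \le\ C\,\bigl\|u^{n}_{s}\bigr\|_{\mathcal{B}_{q,\infty}^{\ka_u,v_{s}}}\,\bigl\|\mathscr{W}\bigr\|_{\mathcal{B}_{\infty,\infty}^{-\ka,\hat{w}_{\si\der}}}. \]
Next, since $v_{t}\le c_{\si}(t-s)^{-\si}\,v_{s}\hat{w}_{\si\der}$ by Lemma~\ref{lem:fact3}, Lemma~\ref{lem:fact2} lets us trade the weight $v_{s}\hat{w}_{\si\der}$ for $v_{t}$ at the price of $(t-s)^{-\si}$; applying then Lemma~\ref{lem:fact1} with the exponential weight $v_{t}=w_{\la+\si t}$ raises the regularity from $-\ka$ to $\ka_u$ at the price of $(t-s)^{-(\ka_u+\ka)/2}$. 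Altogether
\[ \bigl\|p_{t-s}\bigl(u^{n}_{s}\,\mathscr{W}\bigr)\bigr\|_{\mathcal{B}_{q,\infty}^{\ka_u,v_{t}}}\ \le\ C\,(t-s)^{-\beta_{0}}\,\bigl\|u^{n}_{s}\bigr\|_{\mathcal{B}_{q,\infty}^{\ka_u,v_{s}}}\,\bigl\|\mathscr{W}\bigr\|_{\mathcal{B}_{\infty,\infty}^{-\ka,\hat{w}_{\si\der}}},\qquad \beta_{0}:=\si+\tfrac{\ka_u+\ka}{2}. \]
The decisive point is that $\beta_{0}<1$: since $\ka,\ka_u\in(0,1)$ one has $(\ka_u+\ka)/2<1$, and $\si$ may be taken arbitrarily small (Hypothesis~\ref{hyp:pathwise_W}), so $g(r):=C\|\mathscr{W}\|_{\mathcal{B}_{\infty,\infty}^{-\ka,\hat{w}_{\si\der}}}\,r^{-\beta_{0}}$ is integrable on $[0,T]$.

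With these bounds I would run the three standard steps, each time invoking Lemma~\ref{lem:gronwall}. \emph{(a) Uniform a priori bound.} Writing $f_{n}(t)=\|u^{n}_{t}\|_{\mathcal{B}_{q,\infty}^{\ka_u,v_{t}}}$, an easy induction (using that $g$ is integrable) shows each $f_{n}$ is finite and bounded on $[0,T]$, and the recursion gives $f_{n}(t)\le k_{1}+\int_{0}^{t}f_{n-1}(s)\,g(t-s)\,ds$ with $k_{1}=C\|u_{0}\|_{\mathcal{B}_{q,p}^{\ka_u,v_{0}}}$ and $k_{2}=0$; since $f_{0}$ is bounded, Lemma~\ref{lem:gronwall} yields $\sup_{n}\sup_{[0,T]}f_{n}<\infty$, so all iterates lie in a \emph{single} space $\mathcal{C}_{q}^{\ka_u,\la,\si}$. \emph{(b) Convergence.} By linearity $u^{n+1}_{t}-u^{n}_{t}=\int_{0}^{t}p_{t-s}\bigl((u^{n}_{s}-u^{n-1}_{s})\mathscr{W}\bigr)ds$, so $d_{n}(t):=\|u^{n+1}_{t}-u^{n}_{t}\|_{\mathcal{B}_{q,\infty}^{\ka_u,v_{t}}}$ obeys $d_{n}(t)\le\int_{0}^{t}d_{n-1}(s)\,g(t-s)\,ds$ --- the case $k_{1}=k_{2}=0$ of Lemma~\ref{lem:gronwall}, $d_{0}$ being bounded by step (a). Hence $\sum_{n}d_{n}$ converges uniformly on $[0,T]$, the iterates are Cauchy, and their limit $u$ solves~\eqref{eq:integral-form}; passing to the limit is legitimate since $\Gamma$ is Lipschitz on $[0,T]$ and each $u^{n}$ is continuous in $(t,x)$, so $u$ inherits the continuity and the bound required in Notation~\ref{not:c_p}, i.e.\ $u\in\mathcal{C}_{q}^{\ka_u,\la,\si}$. \emph{(c) Uniqueness.} If $u,\tilde{u}\in\mathcal{C}_{q}^{\ka_u,\la,\si}$ both solve~\eqref{eq:integral-form}, then $\phi(t):=\|u_{t}-\tilde{u}_{t}\|_{\mathcal{B}_{q,\infty}^{\ka_u,v_{t}}}$ is bounded and satisfies $\phi(t)\le\int_{0}^{t}\phi(s)\,g(t-s)\,ds$; applying Lemma~\ref{lem:gronwall} with $f_{n}\equiv\phi$ and $k_{1}=k_{2}=0$ forces $\sum_{n}\phi$ to converge, whence $\phi\equiv 0$. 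As $T$ was arbitrary this produces a unique global solution.

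The step I expect to be the main obstacle is the chained Duhamel estimate of the second paragraph, namely making Lemmas~\ref{lem:fact1}--\ref{lem:fact4} cooperate in the presence of the \emph{time-dependent} weight $v_{t}=w_{\la+\si t}$: one has to keep track of the polynomial correction $\hat{w}_{\si\der}$ generated both by the product (Lemma~\ref{lem:fact4}) and by the weight comparison (Lemma~\ref{lem:fact3}), and to arrange the order of operations so that the heat-flow smoothing is only applied against an \emph{exponential} weight, so that the cumulative temporal exponent $\beta_{0}=\si+\tfrac{\ka_u+\ka}{2}$ stays strictly below $1$. This is precisely where the constraints $\ka<1$, $\ka_u\in(\ka,1)$ and $\si$ small are consumed, and where the singularity of $\mathscr{W}$ makes itself felt; once the integrable bound $g$ is secured, the remainder is the routine Picard--Gronwall machinery.
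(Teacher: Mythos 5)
Your proposal is correct and follows essentially the same route as the paper: Picard iteration in $\mathcal{C}_{q}^{\ka_u,\la,\si}$, the chained Duhamel estimate combining Lemmas~\ref{lem:fact1}--\ref{lem:fact4} to produce the integrable kernel $(t-s)^{-\si-(\ka_u+\ka)/2}$, and Dalang's Gronwall lemma to sum the differences. The only cosmetic deviations are the order in which the product/weight/heat-flow lemmas are applied (reversed but equivalent) and the uniqueness step, where you feed the constant sequence $f_n\equiv\phi$ into Lemma~\ref{lem:gronwall} while the paper instead shrinks $\tau$ to get a strict contraction; both conclude $\phi\equiv 0$.
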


\begin{proof}
We will follow a standard Picard iteration scheme to prove our result. 
Consider a small time interval $\left[0,\tau\right]$ where $\tau$ is to be fixed later. We restrict all spaces and corresponding norms on this time interval. 
Define $u^{(0)} \equiv u_0$ and for $n \geq 0$ set 
\begin{equation}\label{eq:iter_eq}
u_t^{(n+1)} = \int_0^t p_{t-s} (u_s^{(n)}\mathscr{W})ds.
\end{equation}
Fix $\ka_u \in (\ka,1)$ and consider $\der u_t^{(n)} = u_t^{(n+1)} - u_t^{(n)}$. Observe that from \eqref{eq:iter_eq} and then applying Lemma~\ref{lem:fact1} we obtain
\begin{align*}
{\|\der u^{(n+1)}_t\|}_{{\mathcal{B}}^{\ka_u, v_t}_{q,\infty}} &\leq \int_0^t {\|p_{t-s}(\der u^{(n)}_{s}\mathscr{W})\|}_{\mathcal{B}^{\ka_u, v_t}_{q,\infty}}\\
& \leq C \int_0^t {|t-s|}^{-\frac{\ka_u + \ka}{2}} {\|\der u^{(n)}_{s} \mathscr{W}\|}_{\mathcal{B}^{-\ka, v_t}_{q,\infty}} ds.
\end{align*}
where here and in the following $C$ is a generic constant which may change in subsequent steps. Now applying Lemmas~\ref{lem:fact2} and \ref{lem:fact3} we get
$$
{\|\der u_t^{(n+1)}\|}_{\mathcal{B}_{q,\infty}^{\ka_u, v_t}} \leq C \int_0^t (t-s)^{-\frac{\ka_u + \ka}{2} - \si} {\|\der u_s^{(n)} \mathscr{W}\|}_{\mathcal{B}_{q,\infty}^{-\ka, v_s \hat{w}_{\der \si}}} ds.
$$
Using $\nu = 1$ in Lemma~\ref{lem:fact4} and observing $\ka_u > \ka$, we find 
$$
{\|\der u^{(n)}_{s} \mathscr{W}\|}_{\mathcal{B}^{-\ka, v_s \hat{w}_{\delta \si}}_{q,\infty}} \leq {\|\der u^{(n)}_s\|}_{\mathcal{B}^{\ka_u, v_s}_{q,\infty}} {\|\mathscr{W}\|}_{\mathcal{B}^{-\ka, \hat{w}_{\delta \si}}_{\infty, \infty}}.
$$
Consequently, 
\begin{equation}\label{eq:alpha}
{\|\der u^{(n+1)}_t\|}_{\mathcal{B}^{\ka_u, v_t}_{q,\infty}} \leq C {\|\mathscr{W}\|}_{\mathcal{B}^{-\ka, \hat{w}_{\delta \si}}_{\infty, \infty}} \int_0^t \dfrac{{\|\der u^{(n)}_s\|}_{\mathcal{B}^{\ka_u, v_s}_{q,\infty}}}{{|t-s|}^{(\ka_u + \ka)/2 + \si}} ds.
\end{equation}
Observe that 
$$\sup_{0 \leq s \leq \tau} {\|\der u^{(0)}_s\|}_{\mathcal{B}^{\ka_u, v_s}_{q,\infty}} = \sup_{0 \leq s \leq \tau} {\|u^{(1)}_s -u^{(0)}_s\|}_{\mathcal{B}^{\ka_u, v_s}_{q,\infty}} = \sup_{0 \leq s \leq \tau} {\|p_{s} u_0 - u_0\|}_{\mathcal{B}^{\ka_u, v_s}_{q,\infty}}.
$$ 
Also recall that $v_s = w_{\la+\si s}$, where the weight $w_{\la + \si s}$ has been defined in Definition~\ref{def:weights} above. Consequently 
${\|p_s u_0\|}_{\mathcal{B}^{\ka_u, v_s}_{q,\infty}} \leq {\|p_s u_0\|}_{\mathcal{B}^{\ka_u, v_0}_{q,\infty}}$. Thus, owing to Lemma~\ref{lem:fact1} and \ref{lem:fact2}, we have
$$
\sup_{0 \leq s \leq \tau} {\|\der u_s^{(0)}\|}_{\mathcal{B}^{\ka_u, v_s}_{q,\infty}} \leq \sup_{0 \leq s \leq \tau} {\|p_s u_0 - u_0 \|}_{\mathcal{B}^{\ka_u, v_0}_{q,\infty}} \leq C {\|u_0\|}_{\mathcal{B}^{\ka_u, v_0}_{q,\infty}}
$$
which is finite by our assumption on the initial condition. We can thus apply Gronwall's Lemma as stated in Lemma~\ref{lem:gronwall} to equation~\eqref{eq:alpha}. 
As a consequence we find $\sum_{n \geq 0} {\|\der u_s^{(n)}\|}_{\mathcal{B}^{\ka_u, v_s}_{q,\infty}}$ converges uniformly on $[0, \tau]$ and thus $u^{(n)}$ converges uniformly in $\mathcal{C}_{q}^{\ka_u, \la, \si}$. This proves existence of a solution on $[0, \tau]$ (observe that we don't need $\tau$ to be small for this step). 

In order to prove uniqueness, we can resort to the same techniques. Consider two solutions $u^1$ and $u^2$ in $\mathcal{C}_{q}^{\ka_u, \la, \si}$ and set $u^{12}=u_1-u_2$. We have to show $u^{12}\equiv 0$. Since we have
$$ %\label{eq:u_diff}
u_t^{12} = \int_0^t p_{t-s}(u_s^{12} \mathscr{W})ds,
$$
we obtain similarly to \eqref{eq:alpha}
$$
{\|u^{12}_t\|}_{\mathcal{B}^{\ka_u, v_t}_{q,\infty}} \leq C {\|\mathscr{W}\|}_{\mathcal{B}^{-\ka, \hat{w}_{\delta \si}}_{\infty, \infty}} \int_0^t \dfrac{{\|u^{12}_s\|}_{\mathcal{B}^{\ka_u, v_s}_{q,\infty}}}{{|t-s|}^{(\ka_u + \ka)/2 + \si}} ds.
$$
Therefore, choosing $\si$ small enough we get:
$$
{\|u^{12}_{t}\|}_{\mathcal{B}^{\ka_u, v_t}_{q,\infty}} \leq \lp C {\|\mathscr{W}\|}_{\mathcal{B}^{-\ka, \hat{w}_{\delta \si}}_{\infty, \infty}} {\tau}^{\eta} \rp \sup_{0 \leq s \leq \tau} {\|u^{12}_s\|}_{\mathcal{B}^{\ka_u, v_s}_{q,\infty}}.
$$
where $\eta = 1-\lp \frac{\ka_u + \ka}{2} + \si \rp$. Then choosing $\tau$ small enough so that $( {\|\mathscr{W}\|}_{{\mathcal{B}}_{\infty, \infty}^{-\ka, \hat{w}_{\delta \si}}} {\tau}^{\eta} ) < 1$, we find $\|u^{12}_t\|_{\mathcal{B}^{\ka_u, v_t}_{q,\infty}} = 0$ for all $t \in [0, \tau]$. This achieves uniqueness on the small interval $[0, \tau)$.

In order to get global existence and uniqueness we observe that our considerations above do not depend on the initial condition of the solution. Hence one can repeat the proof on subsequent intervals of size $\tau$ to get the result. 
\end{proof}

We can now apply our general Proposition~\ref{prop:exi+uni} in order to solve our original equation~\eqref{eq:she-1}. 
\begin{theorem}\label{thm:exi+uniq}
Let $W$ be the centered Gaussian noise defined by \eqref{eq:cov_noise_Fourier}, with $H \in \lp 0, \frac{1}{2}\rp$ and consider $\ka \in (1-H,1)$. Let $u_0 \in \cb_{q,\infty}^{\ka_u, w_{\la}}$ for a given $\la > 0$ and $\ka_u \in (\ka, 1)$, where $w_{\la} = e^{-\la {|x|_{*}^{\delta}}}$ is defined in Definition~\ref{def:weights}. Consider the space $\cac_{q}^{\ka_u, \la, \si}$ introduced in Notation~\ref{not:c_p}. Then equation~\eqref{eq:she-1} admits a solution which is unique in $\mathcal{C}_q^{\ka_u, \la, \si}$.
\end{theorem}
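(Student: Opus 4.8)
The plan is to obtain this theorem as a pathwise (i.e. $\omega$-by-$\omega$) application of the abstract existence and uniqueness result, Proposition~\ref{prop:exi+uni}. The entire probabilistic content reduces to checking that, for almost every realization, the random distribution $\dot W$ driving \eqref{eq:she-1} satisfies Hypothesis~\ref{hyp:pathwise_W}.

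First I would fix the parameters: $\der\in(0,1)$ and $\theta\in(1,1/\der)$ as in Definition~\ref{def:weights}, a H\"older exponent $\ka\in(1-H,1)$, a solution regularity index $\ka_u\in(\ka,1)$, the initial weight parameter $\la>0$, and an arbitrarily small $\si>0$ governing the time-dependent weight $v_t=w_{\la+\si t}$ of Notation~\ref{not:c_p}. By the Besov regularity of the noise proved in Subsection~\ref{sec:noise}, used with H\"older exponent $\ka$ and weight parameter $\si\der>0$, the field $W$ has a version for which, on a set $\Omega_0$ of full probability, $\dot W(\omega)\in\mathcal{B}^{-\ka,\hat{w}_{\si\der}}_{\infty,\infty}$, with moments of all orders for the corresponding norm. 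Since $\ka\in(1-H,1)\subset(0,1)$, for each $\omega\in\Omega_0$ the distribution $\mathscr{W}:=\dot W(\omega)$ meets Hypothesis~\ref{hyp:pathwise_W} verbatim.

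Next I would invoke Proposition~\ref{prop:exi+uni} for each fixed $\omega\in\Omega_0$, with $\mathscr{W}=\dot W(\omega)$, the chosen $\la>0$ and $q\geq1$, and initial condition $u_0\in\cb_{q,\infty}^{\ka_u,w_\la}=\cb_{q,\infty}^{\ka_u,v_0}$, which is exactly the integrability required of $u_0$ in the definition of a mild solution. This yields a unique $u^\omega\in\cac_q^{\ka_u,\la,\si}$ solving the integral equation~\eqref{eq:integral-form} driven by $\dot W(\omega)$; setting $u:=u^\omega$ on $\Omega_0$ (and $u\equiv0$ off it) gives a solution of \eqref{eq:she-1}, and its uniqueness in $\cac_q^{\ka_u,\la,\si}$ is precisely the uniqueness clause of Proposition~\ref{prop:exi+uni}. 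A minor point I would record is measurability: the Picard iterates $u^{(n)}$ in the proof of Proposition~\ref{prop:exi+uni} depend on $\dot W(\omega)$ through measurable operations (heat-kernel convolution, the Besov product of Lemma~\ref{lem:fact4}, Bochner integration) and converge uniformly on $[0,T]$, so the limit is jointly measurable; if desired, the moment bounds on ${\|\dot W\|}_{\mathcal{B}^{-\ka,\hat{w}_{\si\der}}_{\infty,\infty}}$ propagate to moment bounds on $\sup_{t\le T}{\|u_t\|}_{\mathcal{B}^{\ka_u,v_t}_{q,\infty}}$.

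There is essentially no remaining obstacle here: all the analytic work — the heat-semigroup smoothing of Lemma~\ref{lem:fact1}, the weighted Besov paraproduct estimate of Lemma~\ref{lem:fact4}, the Gronwall argument of Lemma~\ref{lem:gronwall}, and above all the Besov regularity of $\dot W$ (the step where the constraint $\ka>1-H$, and hence the admissibility of $H<\tfrac12$, enters) — has already been carried out. The only care needed is bookkeeping: using the single parameter $\si$ consistently in the weight $\hat{w}_{\si\der}$ attached to $\mathscr{W}$ via Hypothesis~\ref{hyp:pathwise_W} and in the weight $v_t=w_{\la+\si t}$ defining $\cac_q^{\ka_u,\la,\si}$ (this matching is what makes Lemma~\ref{lem:fact3} usable inside the proof of Proposition~\ref{prop:exi+uni}), and keeping $\ka_u\in(\ka,1)$ so that $-\ka+\ka_u>0$ and the product $u_s\cdot\dot W$ is well defined through Lemma~\ref{lem:fact4}.
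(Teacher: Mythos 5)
Your proposal is correct and matches the paper's (implicit) argument exactly: the paper states this theorem as an immediate consequence of Proposition~\ref{prop:exi+uni}, once the regularity result of Section~\ref{sec:noise} guarantees that $\dot W$ almost surely satisfies Hypothesis~\ref{hyp:pathwise_W} with $\ka\in(1-H,1)$. Your additional remarks on measurability and on the consistent use of the parameter $\si$ in $\hat{w}_{\si\der}$ and $v_t=w_{\la+\si t}$ are sound bookkeeping that the paper leaves implicit.
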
 

\section{Feynman-Kac representation}
In this section we shall establish a Feynman-Kac representation for the solution of \eqref{eq:she-1}, which will be at the heart of our Lyapounov computations. We first introduce some additional notations about random environments.

\begin{notation}\label{not:FK-1}
Let $B$ be a Brownian motion defined on a probability space $( \hat{\Omega}, \hat{\cf}, \bP )$, independent of the space $( \Omega, \cf, \bp )$ on which $W$ is defined. In the sequel we denote by $\be$ (resp. $\E$) the expectation on $( \Omega, \cf, \bp )$ (resp. $( \hat{\Omega}, \hat{\cf}, \bP )$). We will also write $\E_x$ when we want to highlight the initial value $x$ of the Brownian motion $B$.
\end{notation}

We now introduce the Feynman-Kac functional we shall use in order to represent the solution of \eqref{eq:she-1}. 
\begin{notation}
Let $W$ be the Gaussian noise defined by \eqref{eq:cov_noise_Fourier}. For $\ep > 0$ we set 
\beq\label{eq:V_t^ep}
V_{t}^{\ep}(x) = \int_0^t \int_{\R}l_{\ep} (B_r^x - y) W(dy) dr, 
\eeq
where $l_{\ep}$ stands for the $\ep$-mollifier generated from the standard bump function $l$ as given in the general notation of the Introduction.
We will also write, somehow informally
\beq\label{eq:V_t-def}
V_{t}(x) = \int_0^t W(\der_{B_s^x})ds = \int_0^t \int_{\R} \der_0 (B_r^x - y) W(dy) dr,
\eeq
which will be seen as a $L^2$-limit of the random variables $V_t^{\ep}$.
\end{notation}

We state the following lemma taken from \cite{Chen-Book} which will be used in the proof for Proposition~\ref{prop:exp_moment_conv}
\begin{lemma}\label{lem:subadditive-exp}
For any non-decreasing sub-additive process $Z_t$ defined on $( \hat{\Omega}, \hat{\cf}, \bP )$ with continuous path and with $Z_0 = 0$, the following inequality holds true for all $\theta > 0$ and $t > 0$:
$$
\E \lc \exp \lp \theta Z_t \rp \rc < \infty \hspace{0.2in} \forall\theta, t > 0. 
$$
In addition,
$$
\lim_{t \to \infty} \dfrac{1}{t} \log \lp \E \lc \exp \lp \theta Z_t \rp \rc \rp = \Psi(\theta),
$$
where $\Psi$ is a function from $(0,\infty)$ to $[0,\infty)$.
\end{lemma}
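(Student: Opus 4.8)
The plan is to prove the two assertions separately: first the finiteness of all exponential moments, then the existence of the limit $\Psi(\theta)$, which is a Fekete-type subadditivity argument combined with the moment bound.

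\emph{Step 1: finiteness of exponential moments.} I would start from the subadditivity and monotonicity of $Z$. Fix $t>0$ and a large integer $n$; by subadditivity $Z_t \leq \sum_{k=1}^n \bigl( Z_{kt/n} - Z_{(k-1)t/n} \bigr)$, where by stationarity of increments (which is part of the standard notion of subadditive process, or is built into the hypothesis) each summand has the law of $Z_{t/n}$. The key point is that, since $Z$ is non-decreasing with $Z_0=0$ and has continuous paths, $Z_{t/n} \to 0$ almost surely as $n \to \infty$, and in particular for $n$ large $\bP( Z_{t/n} \geq 1 )$ can be made small. One then invokes the classical fact (see e.g.\ the martingale/coupling argument underlying Kingman's theorem, or a direct estimate) that a sum of $n$ i.i.d.\ copies of a non-negative random variable that is $\leq 1$ with high probability has a Gaussian-type upper tail, so that $\E[\exp(\theta Z_t)] < \infty$. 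Concretely: choose $n$ so that $p := \bP(Z_{t/n} \geq 1) < e^{-\theta}/2$ (say); then control $\E\bigl[\exp\bigl(\theta \sum_{k=1}^n \xi_k\bigr)\bigr]$ where the $\xi_k$ are i.i.d.\ copies of $Z_{t/n}$, splitting each $\xi_k$ into its part below $1$ and a geometric-type overshoot. Since this is the content of \cite{Chen-Book}, I would cite that reference for the detailed estimate rather than reproduce it.

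\emph{Step 2: the logarithmic limit.} Set $\varphi(t) = \log \E[\exp(\theta Z_t)]$, which is finite for every $t$ by Step~1 and non-negative since $Z_t \geq 0$. Using subadditivity $Z_{s+t} \leq Z_s + (Z_{s+t}-Z_s)$ together with the independence and stationarity of increments, one gets
\begin{equation*}
\E\bigl[\exp(\theta Z_{s+t})\bigr] \leq \E\bigl[\exp(\theta Z_s)\bigr]\, \E\bigl[\exp(\theta Z_t)\bigr],
\end{equation*}
i.e.\ $\varphi(s+t) \leq \varphi(s) + \varphi(t)$. Fekete's subadditive lemma then yields that $\varphi(t)/t$ converges as $t \to \infty$ to $\Psi(\theta) := \inf_{t>0} \varphi(t)/t \in [0,\infty)$; the limit is finite because $\varphi(t)/t \leq \varphi(1)/1 < \infty$ for $t \geq 1$ (after a routine interpolation argument handling non-integer $t$ using monotonicity of $Z$), and non-negative because each $\varphi(t) \geq 0$.

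\emph{Main obstacle.} The only genuinely delicate point is Step~1: turning the qualitative statement "$Z_{t/n} \to 0$ a.s." into a quantitative exponential-integrability bound uniform enough to sum over the $n$ blocks. This requires the independence/stationarity of increments to be used carefully and a good tail estimate for sums of i.i.d.\ nonnegative variables that concentrate near $0$. Since the statement is quoted verbatim from \cite{Chen-Book}, my proof would essentially reduce to recalling that argument; the subadditivity bound in Step~2 is then elementary.
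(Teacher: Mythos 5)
The paper itself offers no proof of this lemma: it is imported verbatim from \cite{Chen-Book}, so the comparison is really with the argument there. Your Step~2 is correct and is the standard route: the hypothesis $Z_{s+t}\le Z_s+Z'_t$ with $Z'_t$ independent of the past and $Z'_t\stackrel{d}{=}Z_t$ gives $\E[e^{\theta Z_{s+t}}]\le\E[e^{\theta Z_s}]\,\E[e^{\theta Z_t}]$, and Fekete's lemma (using that $\varphi(t)=\log\E[e^{\theta Z_t}]$ is nonnegative, finite by Step~1, and nondecreasing, hence locally bounded) yields $\Psi(\theta)=\inf_{t>0}\varphi(t)/t\in[0,\infty)$.

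Step~1, however, contains a genuine gap. The ``classical fact'' you invoke --- that a sum of $n$ i.i.d.\ nonnegative variables, each of which is $\le 1$ with high probability, has a Gaussian-type (or even exponential) upper tail --- is false: take $\xi_k=0$ with probability $1-p$ and, with probability $p$, equal to a variable with no finite exponential moment; then $\bP(\xi_k\ge 1)=p$ is as small as you like, yet $\E[e^{\theta\sum_k\xi_k}]=\infty$. Making $\bP(Z_{t/n}\ge 1)$ small by taking $n$ large says nothing about the conditional overshoot, and the ``geometric-type overshoot'' you allude to is precisely the estimate that needs proving, so the sketch is circular at its key point. The actual mechanism in \cite{Chen-Book} does not subdivide time at all: one fixes $t$, introduces the first-passage time $\tau_\lambda=\inf\{s:\,Z_s\ge\lambda\}$, and uses \emph{continuity} together with monotonicity of the path to guarantee $Z_{\tau_\lambda}=\lambda$ exactly (no overshoot at the stopping time); combined with the independence and equidistribution of the post-$\tau_\lambda$ piece this gives the multiplicative tail bound $\bP(Z_t\ge n\lambda)\le\bigl(\bP(Z_t\ge\lambda)\bigr)^n$. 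One then chooses $\lambda$ \emph{large} (rather than $n$ large) so that $\bP(Z_t\ge\lambda)<e^{-2\theta\lambda}$ --- possible merely because $Z_t<\infty$ a.s.\ --- and sums the tail to conclude $\E[e^{\theta Z_t}]<\infty$. Note that path continuity, which in your sketch is used only to assert $Z_{t/n}\to0$ (a statement with no quantitative content here), is in fact the crux of the argument.
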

We now give a rigorous meaning to the quantity $V_t(x)$ by showing that it can be seen as a $L^2-$limit of $V_t^{\ep}(x)$. We also include some exponential bounds which are crucial for the Feynman-Kac representation of \eqref{eq:she-1}.

\begin{proposition}\label{prop:exp_moment_conv}
For $\ep > 0$, $t \geq 0$ and $x \in \R$, let $V_t^{\ep}(x)$ be defined by \eqref{eq:V_t^ep}. Then
\begin{itemize}
\item[(i)] $\lbrace V_t^{\ep} (x); \ep > 0 \rbrace$ is a convergent sequence in $L^2(\Omega \times \hat{\Omega})$. We call its limit $V_t(x)$, where $V_t(x)$ is defined by \eqref{eq:V_t-def}.
\item[(ii)] For all $q \geq 1$ we have 
$$
\lim_{\ep \downarrow 0} \be \otimes \E \lc \lln e^{q V_t^{\ep}} - e^{q V_t} \rrn \rc = 0. 
$$
\end{itemize}
\end{proposition}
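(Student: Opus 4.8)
The plan is to reduce both items to a single exponential–integrability estimate for a Brownian functional, and then to obtain that estimate by combining the sub-additivity Lemma~\ref{lem:subadditive-exp} with Brownian self-similarity.

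\textbf{Item (i).} The starting observation is that, conditionally on the Brownian path $B$, the variable $V_t^\ep(x)$ is centered Gaussian: writing $f_t^\ep(y):=\int_0^t l_\ep(B_r^x-y)\,dr$, one has $V_t^\ep(x)=W(f_t^\ep)$ with $f_t^\ep$ a (random) element of $\cd(\R)$ and $\cf f_t^\ep(\xi)=\cf l(\ep\xi)\int_0^t e^{-\imath\xi B_r^x}\,dr$. I would then use the tower property together with \eqref{eq:cov_noise_Fourier} to obtain
\[
\be\otimes\E\big[(V_t^\ep(x)-V_t^{\ep'}(x))^2\big]
= c_H\int_\R\big|\cf l(\ep\xi)-\cf l(\ep'\xi)\big|^2\,
\E\!\Big[\Big|\int_0^t e^{-\imath\xi B_r^x}\,dr\Big|^2\Big]\,|\xi|^{1-2H}\,d\xi .
\]
The inner Brownian expectation equals $2\int_0^t(t-r)e^{-\xi^2 r/2}\,dr$, which is at most $\min(t^2,4t|\xi|^{-2})$, so, since $|\cf l|\le 1$, the integrand is dominated uniformly in $\ep,\ep'$ by the integrable function $\min(t^2,4t|\xi|^{-2})|\xi|^{1-2H}$ — integrable at the origin because $1-2H>-1$, and at infinity because $-1-2H<-1$. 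As $\cf l(\ep\xi)\to 1$ when $\ep\downarrow 0$, dominated convergence shows $\{V_t^\ep(x)\}_\ep$ is Cauchy in $L^2(\Omega\times\hat\Omega)$, hence converges, its limit being the $V_t(x)$ of \eqref{eq:V_t-def}. Since the above majorant is $\be\otimes\E$-integrable, it is finite for a.e.\ $B$, so the same estimate conditionally on $B$ shows that $V_t^\ep(x)\to V_t(x)$ in $L^2(\Omega)$ for a.e.\ $B$; in particular $V_t(x)$ is, conditionally on $B$, centered Gaussian with variance
\[
\bar\sigma_t^2(B):=\|L_t\|_\ch^2
= c_H\int_\R\Big|\int_0^t e^{-\imath\xi B_r^x}\,dr\Big|^2|\xi|^{1-2H}\,d\xi ,
\]
where $L_t$ is the occupation density of $B^x$ on $[0,t]$ (note that $\bar\sigma_t^2(B)$ does not depend on $x$).

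\textbf{Item (ii), reduction.} From $|e^a-e^b|\le|a-b|(e^a+e^b)$ and Cauchy–Schwarz,
\[
\be\otimes\E\big[|e^{qV_t^\ep}-e^{qV_t}|\big]
\le q\,\big(\be\otimes\E[(V_t^\ep-V_t)^2]\big)^{1/2}
\big(\be\otimes\E[(e^{qV_t^\ep}+e^{qV_t})^2]\big)^{1/2},
\]
and the first factor vanishes by item (i). Thus it suffices to bound the second factor uniformly in $\ep$, i.e.\ to show $\sup_{\ep>0}\be\otimes\E[e^{\lambda V_t^\ep}]<\infty$ and $\be\otimes\E[e^{\lambda V_t}]<\infty$ for every $\lambda>0$. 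Conditioning on $B$ and using conditional Gaussianity, $\be\otimes\E[e^{\lambda V_t^\ep}]=\E[\exp(\tfrac{\lambda^2}{2}\|f_t^\ep\|_\ch^2)]$ and, since $|\cf l(\ep\cdot)|\le 1$, $\|f_t^\ep\|_\ch^2\le\bar\sigma_t^2(B)$; likewise $\be\otimes\E[e^{\lambda V_t}]=\E[\exp(\tfrac{\lambda^2}{2}\bar\sigma_t^2(B))]$. Hence everything comes down to
\begin{equation}\label{eq:plan-key}
\E\big[\exp(\theta\,\bar\sigma_t^2(B))\big]<\infty\qquad\text{for every }\theta>0,\ t>0 .
\end{equation}

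\textbf{Item (ii), proof of \eqref{eq:plan-key}.} I would introduce $Z_t:=\sup_{0\le s\le t}\bar\sigma_s(B)$, which is non-decreasing, continuous, with $Z_0=0$. It is sub-additive for the Brownian time-shift $\theta_s$: decomposing $L_{s+v}=L_s+(L_{s+v}-L_s)$, the triangle inequality in $\ch$, the translation invariance of $\|\cdot\|_\ch$, and the fact that $L_{s+v}-L_s$ is a spatial translate of the occupation density of the fresh Brownian motion $(B_{s+w}-B_s)_{w\le v}$ together yield $Z_{s+t}\le Z_s+Z_t\circ\theta_s$. Lemma~\ref{lem:subadditive-exp} then gives $\E[e^{Z_t}]<\infty$ for all $t$; moreover, $Z_t\circ\theta_s$ is independent of $\hat{\cf}_s$ and distributed as $Z_t$, so $t\mapsto\log\E[e^{Z_t}]$ is sub-additive and finite, whence $\E[e^{Z_t}]\le e^{C(1+t)}$ for all $t\ge 0$ with $C=\log\E[e^{Z_1}]$. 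Now I would invoke the Brownian self-similarity $\bar\sigma_t(B)\overset{d}{=}t^{(1+H)/2}\bar\sigma_1(B)$ with the choice $t=\mu^{2/(1+H)}$: since $\bar\sigma_t(B)\le Z_t$, this gives $\E[e^{\mu\bar\sigma_1(B)}]=\E[e^{\bar\sigma_t(B)}]\le e^{C(1+\mu^{2/(1+H)})}$ for every $\mu>0$ with one fixed constant $C$. A Chernoff bound then produces
\[
\bP\big(\bar\sigma_1^2(B)>r\big)=\bP\big(\bar\sigma_1(B)>r^{1/2}\big)
\le\inf_{\mu>0}e^{C}\,e^{-\mu r^{1/2}+C\mu^{2/(1+H)}}\le C'\,e^{-c\,r^{1/(1-H)}},
\]
and since $H<\tfrac12$ the exponent $\tfrac1{1-H}$ is strictly larger than $1$, so $\E[e^{\theta\bar\sigma_1^2(B)}]<\infty$ for all $\theta>0$. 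Using $\bar\sigma_t^2(B)\overset{d}{=}t^{1+H}\bar\sigma_1^2(B)$ once more gives \eqref{eq:plan-key}, which finishes the proof.

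\textbf{Main obstacle.} The delicate point is \eqref{eq:plan-key}: upgrading exponential integrability of $\bar\sigma_t$ to exponential integrability of its \emph{square}. A direct attack — bounding $\bar\sigma_t^2=\|L_t\|_\ch^2$ by a product of the Hölder norm, the supremum, and the range of the Brownian local time and then applying Young's inequality — is too lossy: after squaring it only gives a tail $e^{-cr^\beta}$ with $\beta<1$. What makes the argument work is the interplay of sub-additivity, which forces $\log\E[e^{\mu\bar\sigma_1}]$ to grow only like $\mu^{2/(1+H)}$ rather than $\mu^2$, with the exact self-similarity exponent $1+H$ of $\bar\sigma_t^2$. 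The other points requiring some care are the uniform majorant in the dominated-convergence step of item (i), and the translation invariance together with the independence of increments used to justify the sub-additivity of $Z_t$.
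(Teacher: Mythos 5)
Your proposal is correct. Item (i) and the reduction in item (ii) — the mean value theorem plus Cauchy--Schwarz, followed by conditional Gaussianity to reduce everything to exponential integrability of the conditional variance $\bar\sigma_t^2(B)=\int_{\R}\lln\int_0^t e^{\imath\xi B_s}ds\rrn^2\mu(d\xi)$ — match the paper's Steps 1, 2 and 4 essentially verbatim. The key step, $\E[\exp(\theta\bar\sigma_t^2)]<\infty$ for all $\theta$, is where you genuinely diverge. The paper sub-additivizes the \emph{square} directly: it observes that $Z_t=t^{-1}\bar\sigma_t^2$ satisfies $Z_{s+t}\le Z_s+Z_t'$ by writing $\frac{1}{s+t}\int_0^{s+t}$ as a convex combination of $\frac1s\int_0^s$ and $\frac1t\int_s^{s+t}$ and applying Jensen's inequality inside the $\mu$-integral; one application of Lemma~\ref{lem:subadditive-exp} to the running maximum $\tilde Z_T=\max_{t\le T}Z_t$ then yields all exponential moments of $\bar\sigma_t^2$ at once. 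You instead sub-additivize $\bar\sigma_t$ itself via the triangle inequality in $\ch$ and translation invariance, deduce $\log\E[e^{Z_t}]\le C(1+t)$, and then use the exact scaling $\bar\sigma_t\overset{d}{=}t^{(1+H)/2}\bar\sigma_1$ together with a Chernoff optimization to convert linear growth of the log-moment generating function into the tail bound $\bP(\bar\sigma_1^2>r)\le C'e^{-cr^{1/(1-H)}}$, which dominates every exponential. Both arguments are sound. The paper's Jensen trick is shorter and does not use self-similarity at all, so it would survive modifications of the noise that break exact scaling; your route is the classical ``sub-additivity plus scaling'' upgrade familiar from intersection local times and produces a quantitative tail for $\bar\sigma_1^2$, which is more information than is strictly needed. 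Two minor remarks: the exponent $1/(1-H)>1$ only requires $H<1$, not $H<\frac12$, so the restriction you cite is not the operative one; and both your proof and the paper's take for granted the pathwise continuity of the sub-additive process fed into Lemma~\ref{lem:subadditive-exp}, so you are at the same level of rigor there.
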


\begin{proof}
We divide this proof in several steps.

\noindent
\emph{Step 1: Proof of (i).}
Observe that $V_{t}^{\ep}(x)$ can be written as $\int_0^t W(l_{\ep}(B_r^x-\cdot))dr$, where $W(l_{\ep}(B_r^x - \cdot))$ has to be understood as a Wiener integral conditionally on $B$ (see \eqref{eq:integral_wiener}). In the following we try to find $\lim_{\ep_1,\ep_2 \to 0}\be \otimes \mathbb{E}\left[V_{t}^{\ep_1}(x)V_{t}^{\ep_2}(x)\right]$, which is enough to ensure the $L^2$ convergence of $V_t^{\ep}(x)$. To this aim, we invoke the isometry~\eqref{eq:cov_noise_Fourier} in order to get
\begin{align*}
 \E\otimes \be \left[V_{t}^{\ep_1}(x) V_{t}^{\ep_2}(x)\right] &= \mathbb{E}\left[ \int_0^t \int_0^t W(l_{\ep_1}(B_u^x - \cdot)) W(l_{\ep_2}(B_v^x - \cdot)) du~dv\right]\\
&=\mathbb{E}\int_0^t \int_0^t \int_{\R} \mathcal{F}l_{\ep_1}(B_u^x - \cdot)(\xi)\overline{\mathcal{F}l_{\ep_2}(B_v^x - \cdot)(\xi)} \mu(d\xi)~du~dv.
\end{align*}
Taking into account the expression for $\cf l_{\ep} (B_u^x - \cdot)$ we thus get
\begin{align}\label{a1}
\E\otimes \be \left[V_{t}^{\ep_1}(x) V_{t}^{\ep_2}(x)\right] &= \mathbb{E}\left[ \int_0^t \int_0^t \int_{\R} \cf l (\ep_1\xi) e^{ -\iota \left< \xi, B_u^x \right>} \overline{\cf l(\ep_2 \xi)} e^{ \iota \left<\xi,B_v^x \right>}\mu(d\xi)~du~dv \right]
\notag \\
&=\mathbb{E} \left[ \int_{\R}\left( e^{-\iota\left<\xi,B_u^x-B_v^x\right>}du~dv \right)\cf l (\ep_1 \xi)\overline{\cf l (\ep_2 \xi)} \mu(d\xi) \right].
\end{align}
We can now use the fact that $B_u^x - B_v^x \sim \mathcal{N}(0, v-u)$ to write
\beq\label{eq:cov_V-t^ep}
\E \otimes \be \left[V_{t}^{\ep_1}(x) V_{t}^{\ep_2}(x)\right] =\int_{\R}  \lp \int_{[0,t]^2} \psi_{\ep_1, \ep_2} (u, v;\xi) du dv \rp\mu(d\xi),  
\eeq
where $\psi_{\ep_1, \ep_2}(u,v;\xi)$ is defined by
$$
\psi_{\ep_1, \ep_2} (u,v;\xi) = e^{-\frac{1}{2} {|\xi|}^2 |v-u|} \cf l (\ep_1 \xi)\overline{\cf l (\ep_2 \xi)} .
$$
Moreover, setting $\psi(u,v;\xi) = e^{-\frac{1}{2} {|\xi|}^2 |v-u|}$, it is readily seen that
$$
\lim_{\ep_1, \ep_2 \to 0} \psi_{\ep_1, \ep_2}(u,v;\xi) = \psi(u,v; \xi),\quad \text{ and } \quad \lln \psi_{\ep_1, \ep_2} (u,v ; \xi) \rrn \leq \lln \psi(u,v; \xi) \rrn.
$$
In addition, the reader can check that 
$$
\int_{\R} \int_{[0,t]^2} \psi(u,v; \xi) \,du \,dv \,\mu(d\xi) \leq c\dfrac{\mu(d \xi)}{1+{|\xi|}^2} < \infty.
$$
Therefore, a standard application of the dominated convergence theorem to relation~\eqref{eq:cov_V-t^ep} proves that for every sequence $\ep_n$ converging to zero, $V_{t}^{\ep_n}(x)$ converges in $L^2$ to a limit denoted by $V_{t}(x)$ as mentioned before.

\noindent
\emph{Step 2: Conditional law of $V_t(x)$.}
We will next show that $V_t$ is conditionally Gaussian for all $t \geq 0$ with conditional variance given by
\beq\label{eq:V_tvar}
\be\lc  V_t^2 \rc = \int_{\R} {\lln \int_0^t e^{\imath \xi B_s} ds \rrn}^2 \mu (d \xi). 
\eeq
This will follow from similar calculations as before. First observe that $V_t^{\ep}$ is conditionally Gaussian, with conditional variance given by 
\beq\label{eq:V_t^ep_var}
\be {\lc V_t^{\ep} \rc}^2 = \be \lc  {\lp \int_0^t W(l_{\ep} (B_r^x - \cdot))dr \rp}^2 \rc 
\eeq
The right hand side of \eqref{eq:V_t^ep_var} can be simplified by using the covariance structure of our noise as follows, using the same computations as for \eqref{a1}:
\begin{equation*}
\be {\lc {\lp V_t^{\ep} \rp}^2 \rc} 
= \int_0^t \int_0^t \be \lc W(l_{\ep} (B_r^x - \cdot))W(l_{\ep} (B_s^x - \cdot))\rc dr ds 
= \int_{\R} {\lln \cf l(\ep \xi) \rrn}^2 {\lln \int_0^t e^{\imath \xi B_s}  ds\rrn}^2 \mu(d \xi).
\end{equation*}
Since $V_t$ is the $L^2$ limit of $V_t^{\ep}$ and $L^2$ limits of Gaussian processes remain Gaussian, we now have that conditioned on the Brownian motion, $V_t$ is Gaussian with zero mean and variance given by \eqref{eq:V_tvar}.

\noindent
\emph{Step 3: Exponential moments of $V_t$.}
Our next aim is to show that $V_t$ entertains exponential moments. Specifically we will prove that for all $q > 0$ we have
\beq\label{eq:unconditional_exp_exponential_V_t}
\be \otimes \E \lc e^{q V_t} \rc < \infty.
\eeq
Since we have already shown that $V_t$ is conditionally Gaussian, we have
\beq\label{a2}
\be \lc e^{q V_t} \rc = \exp \lp \dfrac{q^2}{2} \int_{\R} {\lln \int_0^t e^{\imath \xi B_s} ds \rrn}^2 \mu(d \xi) \rp.   
\eeq
Hence, the unconditional expectation of $e^{q V_t}$ is given by
$$
\E \otimes \be \lp e^{q V_t} \rp = \E \lc \exp \lp \dfrac{q^2}{2} \int_{\R} {\lln \int_0^t e^{\imath \xi B_s} ds \rrn}^2  \mu(d \xi)\rp \rc.
$$
%%%%%%%%%%%%%%%%%%%%%%%%%%%%%%%%%%%%%%%%
To see that this quantity is finite let us define the following random variable
$$
Z_t =  \dfrac{1}{t} \int_{\R} {\lln \int_0^t e^{\imath \la B_u} du \rrn}^2 \mu(d \la) .
$$
Observe that we can write:
\begin{align}\label{eq:Z_t/t_subadd-2}
\dfrac{Z_{s+t}}{s+t} &= \dfrac{1}{(s+t)^2} \int_{\R} {\lln \int_0^{s+t} e^{\imath \la B_u} du \rrn}^2\mu(d \la)\nonumber\\
&= \int_{\R} {\lln \dfrac{s}{s+t} {\lp \dfrac{1}{s} \int_0^s e^{\imath \la B_u} du \rp} + \dfrac{t}{s+t} {\lp \dfrac{1}{t} \int_s^{s+t} e^{\imath \la B_u} du \rp} \rrn}^2 \mu(d \la).
\end{align}
Using Jensen's inequality in \eqref{eq:Z_t/t_subadd-2} we now obtain:
\begin{align*}
\dfrac{Z_{s+t}}{s+t} &\leq \int_{\R} \lp  \dfrac{s}{s+t} {\lln \dfrac{1}{s} \int_0^s e^{\imath \la B_u} du \rrn}^2 + \dfrac{t}{s+t} {\lln  \dfrac{1}{t} \int_s^{s+t} e^{\imath \la B_u} du \rrn}^2 \rp \mu(d \la)\\
&= \dfrac{Z_{s} + Z'_t}{s+t} \quad\text{ where }\quad {Z'_t} = \dfrac{1}{t}\int_{\R} {\lln \int_s^{s+t} e^{\imath \la B_u} du \rrn}^2 \mu (d \la). 
\end{align*}
We have thus obtained that $Z$ satisfies the following sub-additive property:
\beq\label{eq:Z_t/t_subadd}
Z_{s+t} \leq Z_s + {Z'_t}. 
\eeq
Moreover, notice that $Z_t'$ above can be written as
$$
Z_t' = \dfrac{1}{t} \int_{\R} {\lln \int_s^{s+t} e^{\imath \la (B_u - B_s)} du \rrn}^2 \mu(d \la),
$$ 
due to the fact that ${| e^{- i \la B_s} |}^2 = 1$. Hence, it is readily checked that $Z_t'$ is independent of $\lcl B_u; 0 \le u \le s \rcl$ and thus also independent of $\lcl Z_u; 0 \le u \le s \rcl$. In addition, ${Z'_t}\stackrel{d}{=} Z_t$. 
Let us now slightly generalize those considerations. Namely, consider a new process $\tilde{Z}$ defined as
\beq\label{eq:tildeZ}
\tilde{Z}_T = \max_{t \leq T} Z_t.
\eeq
It is easily seen that the new process $\tilde{Z}_t$ is also sub-additive in nature. In other words, for all $T_1, T_2 \geq 0$, we have
$$
\tilde{Z}_{T_1 + T_2} \leq \tilde{Z}_{T_1} + \tilde{Z}_{T_2}',
$$
where $\tilde{Z}'_{T_2}$ is independent of $\lbrace \tilde{Z}_t; 0 \leq t \leq T_1 \rbrace$ with $\tilde{Z}_{T_2}'\stackrel{d}{=}\tilde{Z}_{T_2}$.
In addition, since $\tilde{Z}_0 = 0$ and $\tilde{Z}$ has continuous paths, we can apply Lemma~\ref{lem:subadditive-exp} in order to obtain for all $\theta > 0$ and $t > 0$:
$$
\E \lc \exp \lcl \theta \tilde{Z}_t \rcl \rc < \infty,
$$
and as a direct consequence we also have:
$$
\E \lc \exp \lcl \theta {Z}_t \rcl \rc < \infty.
$$
This proves the boundedness of the unconditional expectation of the exponential moments of $V_t$ as expressed in \eqref{eq:unconditional_exp_exponential_V_t}.

\noindent
\emph{Step 4: Conclusion}.
Observe that using the mean value theorem in its integral form and then Cauchy-Schwarz inequality one can write:
\begin{align}\label{eq:ineq-1}
\E \otimes \be \lc \lln e^{qV_t^{\ep}} - e^{q V_t} \rrn \rc &= \E \otimes \be \lc \lln q \lp V_t - V_t^{\ep} \rp \int_0^1 e^{\la q V_t^{\ep} + (1-\la)qV_t} \rrn \rc \nonumber\\
&\leq q {\lp \E \otimes \be \lc {\lln V_t - V_t^{\ep} \rrn}^2 \rc \rp}^{\frac{1}{2}} {\lp \E \otimes \be {\lc {\lln \int_0^1 e^{\la q V_t^{\ep} + (1-\la)qV_t} d\la \rrn}^2 \rc}\rp}^{\frac{1}{2}}.
\end{align}
Using Cauchy-Schwarz inequality on two consecutive occasions separated by Fubini, the object on the right hand side in \eqref{eq:ineq-1} can be further decomposed as:
\begin{align}\label{eq:ineq-2}
\E \otimes \be \lc {\lln \int_0^1 e^{\la q V_t^{\ep} + (1-\la)qV_t} d\la \rrn}^2 \rc &\leq \int_0^1 \E \otimes \be \lc e^{2 \la q V_t^{\ep} + 2(1-\la)q V_t} \rc d\la \nonumber \\
&\leq \int_0^1 {\lp \E \otimes \be \lc e^{4\la q V_t^{\ep}} \rc \rp}^{\frac{1}{2}} {\lp \E \otimes \be \lc e^{4 (1- \la)q V_t} \rc \rp}^{\frac{1}{2}} d\la
\end{align}
Observe from the variance of $V_t^{\ep}$ calculated earlier in \eqref{a2} that
$$
\be \lc e^{q V_t^{\ep}} \rc = \exp \lc \dfrac{q^2}{2} \int_{\R}  e^{- \ep \xi^2}{\lln \int_0^t e^{\imath \xi B_s} ds \rrn}^2 \mu(d \xi) \rc,
$$
and consequently
$$
\E \otimes \be \lc e^{q V_t^{\ep}} \rc \leq \E \otimes \be \lc e^{q V_t} \rc.
$$
Plugging this observation into \eqref{eq:ineq-2} we obtain
\beq\label{eq:ineq-3}
\E \otimes \be \lc {\lln \int_0^1 e^{\la q V_t^{\ep} + (1-\la)qV_t} d\la \rrn}^2 \rc \leq \E \otimes \be \lc e^{4 q V_t} \rc,
\eeq
which is finite by our considerations in Step 3 (see \eqref{eq:unconditional_exp_exponential_V_t}).
Using \eqref{eq:ineq-3} in \eqref{eq:ineq-1} we have
$$
\E \otimes \be \lc \lln e^{qV_t^{\ep}} - e^{q V_t} \rrn \rc \leq q {\lp \E \otimes \be \lc {\lln V_t - V_t^{\ep} \rrn}^2 \rc \rp}^{\frac{1}{2}} {\lp \E \otimes \be \lc e^{4 q V_t} \rc \rp}^{\frac{1}{2}}.
$$
Since $\lbrace V_t^{\ep} (x); \ep > 0 \rbrace$ is a convergent sequence in $L^2(\Omega \times \hat{\Omega})$, our conclusion now follows by taking limits. 
\end{proof}

With the exponential moments of $V_t(x)$ in hand, we can now obtain the announced Feynman-Kac representation of $u$.
\begin{proposition}\label{prop:F-K}
Consider the Gaussian noise $\dot{W}$ defined by \eqref{eq:cov_noise_Fourier}. Let $u$ be the unique solution of equation~\eqref{eq:she-1} with initial condition $u_0(x) = 1$, written in its mild form as:
\beq\label{eq:integral-form-2}
u_t(x) = 1 + \int_0^t p_{t-s} (u_s \dot{W}) ds.
\eeq
Then $u$ can be represented as
\beq\label{eq:f-k}
u_t(x) = \E_x \lc \exp \lp V_{t}(x) \rp \rc, 
\eeq
where $V_t(x)$ is the Feynman-Kac functional defined by \eqref{eq:V_t-def}.
\end{proposition}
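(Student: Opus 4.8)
The plan is to regularize the noise, invoke the classical Feynman--Kac formula for the resulting smooth equation, and then pass to the limit on both sides of the identity simultaneously. First I would introduce, for $\ep\in(0,1]$, the mollified noise $\dot W_\ep(z) := \int_\R l_\ep(z-y)\,W(dy)$, which is almost surely an infinitely differentiable function of $z$ with at most sub-polynomial growth; arguing exactly as in the proof that $\dot W\in\cb_{\infty,\infty}^{-\ka,\hat w_{\si}}$, and using that convolution against $l_\ep$ is bounded on the weighted Besov spaces uniformly in $\ep\in(0,1]$ (the weight $\hat w_{\der\si}$ varies by a bounded factor over intervals of length $\le 1$), one checks that $\dot W_\ep$ still satisfies Hypothesis~\ref{hyp:pathwise_W} with the same parameters. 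Proposition~\ref{prop:exi+uni} then produces a unique $u^\ep\in\cac_q^{\ka_u,\la,\si}$ solving $u_t^\ep = p_t 1 + \int_0^t p_{t-s}(u_s^\ep\dot W_\ep)\,ds$.

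The key step is the Feynman--Kac representation for the regularized equation, namely $u_t^\ep(x)=\E_x[\exp(V_t^\ep(x))]$ for every $\ep,t,x$. Here $\dot W_\ep$ is a genuine potential, $\int_0^t\dot W_\ep(B_r^x)\,dr = V_t^\ep(x)$, and $\be\otimes\E[e^{qV_t^\ep}]\le\be\otimes\E[e^{qV_t}]<\infty$ for all $q$ by the sub-additive argument in Step~3 of the proof of Proposition~\ref{prop:exp_moment_conv}. Setting $w_t^\ep(x):=\E_x[\exp(V_t^\ep(x))]$, the elementary identity $e^{V_t^\ep(x)} = 1 + \int_0^t \dot W_\ep(B_s^x)\,e^{\int_s^t\dot W_\ep(B_r^x)dr}\,ds$, the Markov property of $B$ at time $s$, and Fubini's theorem (all exchanges being licensed by the above exponential integrability together with $\E_x[\dot W_\ep(B_s)^2]<\infty$) show that $w^\ep$ satisfies the same mild equation as $u^\ep$; a routine a priori estimate on this mild equation, combining the exponential moment bound with the heat-kernel smoothing of Lemma~\ref{lem:fact1}, places $w^\ep$ in $\cac_q^{\ka_u,\la,\si}$. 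The uniqueness part of Proposition~\ref{prop:exi+uni} then yields $u^\ep = w^\ep$.

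It remains to let $\ep\downarrow 0$ on both sides. On the left: all a priori bounds in the proof of Proposition~\ref{prop:exi+uni} are linear in $\|\mathscr W\|_{\cb_{\infty,\infty}^{-\ka,\hat w_{\der\si}}}$, so applying them to $u^\ep-u$ together with Gronwall (Lemma~\ref{lem:gronwall}) shows that the solution map $\mathscr W\mapsto u$ is locally Lipschitz from $\cb_{\infty,\infty}^{-\ka,\hat w_{\der\si}}$ into $\cac_q^{\ka_u,\la,\si}$; moreover, since $\widehat{\dot W_\ep}(\xi)=\hat W(\xi)\,\cf l(\ep\xi)$ with $\cf l(\ep\xi)\to 1$ and $\cf l\le 1$, the very same second-moment computation that shows $\dot W\in\cb_{\infty,\infty}^{-\ka,\hat w_{\si}}$ gives, by dominated convergence, $\be[\|\dot W_\ep-\dot W\|_{\cb_{\infty,\infty}^{-\ka,\hat w_{\der\si}}}^p]\to 0$ for every $p\ge 1$ (there is room to spare in the exponents, as $\ka>1-H$). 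Hence along a subsequence $\ep_n\downarrow 0$ one has $\dot W_{\ep_n}\to\dot W$ almost surely in $\cb_{\infty,\infty}^{-\ka,\hat w_{\der\si}}$, so $u^{\ep_n}\to u$ almost surely in $\cac_q^{\ka_u,\la,\si}$, and since $\ka_u>0$ the evaluation $f\mapsto f(x)$ is continuous on $\cb_{q,\infty}^{\ka_u,v_t}$, whence $u_t^{\ep_n}(x)\to u_t(x)$ almost surely. On the right: Proposition~\ref{prop:exp_moment_conv}(ii) with $q=1$, together with Fubini and the conditional Jensen inequality, gives $\be[\,\lln\E_x[e^{V_t^\ep}]-\E_x[e^{V_t}]\rrn\,]\le\be\otimes\E[\,\lln e^{V_t^\ep}-e^{V_t}\rrn\,]\to 0$, so $\E_x[\exp(V_t^\ep(x))]\to\E_x[\exp(V_t(x))]$ almost surely along a further subsequence. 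Passing to the limit in $u_t^\ep(x)=\E_x[\exp(V_t^\ep(x))]$ along this subsequence yields $u_t(x)=\E_x[\exp(V_t(x))]$ almost surely.

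The main obstacle is the middle step, namely making the Feynman--Kac identity for the mollified equation fully rigorous: one has to verify that $w^\ep$ genuinely solves the mild equation (the Markov-property-and-Fubini argument, in which the exponential integrability of $V_t^\ep$ is exactly what justifies the exchanges of integration) and that $w^\ep$ belongs to the uniqueness class $\cac_q^{\ka_u,\la,\si}$, which needs a bootstrap-type a priori estimate since a priori $w^\ep$ is only seen to be a continuous function with controlled growth. The two limiting steps are comparatively routine once Propositions~\ref{prop:exi+uni} and~\ref{prop:exp_moment_conv} are in hand.
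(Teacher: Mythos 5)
Your proposal is correct and follows essentially the same route as the paper: mollify the noise, invoke the classical Feynman--Kac formula for the regularized equation, and pass to the limit on both sides using the convergence $u^\ep\to u$ (from the well-posedness theory of Proposition~\ref{prop:exi+uni}) together with Proposition~\ref{prop:exp_moment_conv} for the right-hand side. The paper states the two intermediate steps (the Feynman--Kac identity for the mollified equation and the convergence $u^\ep\to u$) more tersely than you do, but the substance is identical.
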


\begin{proof}
For $\ep > 0$, let $l_{\ep}$ be the approximation of the identity given in the Introduction. We define a smoothed noise $\dot{W}^{\ep}$ by $\dot{W}^{\ep}= \dot{W} \ast l_{\ep}$, as well as the approximation $u^{\ep}$ of $u$ as the solution of 

\beq\label{eq:mild_form}
u_t^{\ep} (x) = 1+ \int_0^t p_{t-s} (u_s^{\ep} \, \dot{W}^{\ep})ds.
\eeq
Along the same lines as for Proposition~\ref{prop:exi+uni} we can prove that
$$
\lim_{\ep \downarrow 0} u^{\ep} = u \text{ in }\cac_q^{\ka_u, \la, \si} ,
$$
where $\ka_u, \la$ and $q$ are defined in Proposition~\ref{prop:exi+uni}. In addition, since $u^{\ep}$ solves \eqref{eq:mild_form}
in the strong sense, it also admits a Feynman-Kac representation of the form
$$
u_t^{\ep} (x) = \E \lc e^{V_t^{\ep}(x)} \rc,
$$
where $V_t^{\ep}(x)$ is defined by \eqref{eq:V_t^ep}.
For any $p \geq 1$, we are now claiming that for all $T > 0$ we have 
\beq\label{eq:u_t^ep_convergence}
\lim_{\ep \downarrow 0} \sup_{(t,x) \in [0,T] \times \R} \E \lc {\lln u_t^{\ep} (x) - u_t(x) \rrn}^p \rc = 0.
\eeq
In order to get \eqref{eq:u_t^ep_convergence}, notice that 
\begin{align*}
\be \lc {\lln u_t^{\ep} (x) -u_t(x) \rrn}^p \rc &= \be \lc {\lln \E \lc e^{V_t(x)} -e^{V_t^{\ep}(x)} \rc \rrn}^p \rc\\ 
&\leq \be \otimes \E \lc {\lln V_t(x) - V_t^{\ep}(x) \rrn}^p   \lp e^{p V_t(x)} + e^{p V_t^{\ep} (x)} \rp\rc.
\end{align*}
An elementary application of Cauchy-Schwarz inequality and the fact that $V_t(x)$, $V_t^{\ep}(x)$ are conditionally Gaussian yield
\begin{multline*}
\be \lc {\lln u_t^{\ep}(x) - u_t(x) \rrn}^p \rc \\
\leq c_p {\lp \be \otimes \E \lc {\lln V_t(x) - V_t^{\ep}(x) \rrn}^{2} \rc \rp}^{\frac{p}{2}} \lc {\lp \be \otimes \E \lc e^{2p V_t^{\ep} (x)} \rc \rp}^{\frac{1}{2}} +  {\lp \be \otimes \E \lc e^{2p V_t (x)} \rc \rp}^{\frac{1}{2}} \rc.
\end{multline*}
We can now apply directly Proposition~\ref{prop:exp_moment_conv} in order to get
$$
\lim_{\ep \downarrow 0}  \be \lc {\lln u_t^{\ep}(x) - u_t(x) \rrn}^p \rc = 0.
$$
The proof of \eqref{eq:f-k} is now achieved.
\end{proof}

\section{Principal eigenvalues}
Recall that we have shown in Proposition~\ref{prop:F-K} that the unique solution $u$ of our stochastic heat equation~\eqref{eq:integral-form-2} can be written as 
$$
u_t(x) = \E_x \lc \exp(V_t(x)) \rc = \E_x \lc \exp \lp \int_0^t W(\der_{B_s})ds \rp \rc,
$$
where the second identity stems from \eqref{eq:V_t-def}. 

\noindent
Furthermore, $W$ being a homogeneous noise, the asymptotic behavior of $u$ does not depend on the space parameter $x \in \R$. For sake of simplicity we will thus consider $x = 0$ and investigate the quantity 
$$
u_t(0) = \E_0 \lc \exp \lp \int_0^t W(\der_{B_s})ds \rp \rc.
$$

As we will see later on the following equivalence holds true as $t \to \infty$:
\beq\label{eq:solution_asymp_relation}
\E_0 \lc \exp \lp \int_0^t W(\der_{B_s})ds \rp \rc \approx \exp \lp t \la_{\dot{W}} \lp Q_{R_t} \rp \rp
\eeq
for a given region $R_t$ and a principal eigenvalue type quantity $\la_{\dot{W}}$ defined as 
\beq \label{eq:la}
\la_{\dot{W}} (D) = \sup_{g \in \ck (D)} \lcl W(g^2) - \dfrac{1}{2}\int_D {\lln g'(x)\rrn}^2 dx \rcl.
\eeq
In \eqref{eq:la}, $\ck(D)$ is a set of functions defined by 
\beq\label{eq:K}
\ck(D) = \lcl g \in \cs(D): {\|g\|}_2 = 1 \text{ and }g' \in L^2(\R) \rcl,
\eeq
where $\cs(D)$ is the space of infinitely smooth functions that vanish at the boundary of an open domain $D$. Notice that $\ck(D)$ can be seen as a subset of the classical Sobolev space $W^{1,2}(\R)$. In addition, observe that the set $\ck(D)$ is not compact, so that the reader might think that the $\sup$ defining $\la_{\dot{W}}(D)$ in \eqref{eq:la} is ill-defined. However, as we will see in the proof of Proposition~\ref{prop:ub}, our optimization can be reduced by scaling to a compact set $\cg(D)$ defined by
\beq\label{eq:G}
\cg(D) := \lcl g \in \cs(D): {\|g\|}_2^2 + \frac{1}{2}{\|g'\|}_2^2 = 1 \rcl.
\eeq
Before establishing relation \eqref{eq:solution_asymp_relation}, we will try to get some information about the limiting behavior of $\la_{\dot{W}}(D)$ as the size of the box $D$ becomes large. 
\subsection{Basic results}
In this section we establish some Gaussian and analytic results which will be building blocks in the asymptotics \eqref{eq:solution_asymp_relation}. We start by noting that $W(g)$ is a well-defined Gaussian field on the space $\ck(D)$ defined by \eqref{eq:K}.
\begin{lemma}\label{lem:finite_W(g^2)}
Let $g \in \ck(D)$ for any $D \subset \R$. Then 
$$
W(g^2) - \dfrac{1}{2} {\|g\|}_2^2 < \infty \hspace{0.2in} \textnormal{a.s.}
$$
\end{lemma}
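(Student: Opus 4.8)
The plan is to notice that, since $\|g\|_2 = 1$, the quantity in the statement equals $W(g^2)$ minus a deterministic finite constant, so the assertion is equivalent to the claim that $W(g^2)$ is an honest (finite-variance) element of the first chaos generated by $W$, that is, that $g^2 \in \ch$. Once this is known, $W(g^2)$ is a centered Gaussian random variable, hence a.s.\ finite, and the lemma follows. So the whole problem reduces to checking that
\[
\be\lc W(g^2)^2 \rc = c_H \int_\R \int_\R \frac{|g^2(x+y) - g^2(x)|^2}{|y|^{2-2H}}\, dx\, dy < \infty ,
\]
where we have used the representation \eqref{eq:cov_noise_homSob} of the $\ch$-inner product.

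I would first record two elementary facts about $g \in \ck(D)$. Since $\|g\|_2 = 1$ and $g' \in L^2(\R)$ we have $g \in W^{1,2}(\R)$, and the one-dimensional Sobolev embedding yields $g \in L^\infty(\R)$; in particular $g^2 \in L^2(\R)$, since $\|g^2\|_2^2 \le \|g\|_\infty^2 \|g\|_2^2 < \infty$. I would also use the classical translation estimate $\|g(\cdot + y) - g\|_{L^2(\R)} \le |y|\, \|g'\|_{L^2(\R)}$, valid for every $y \in \R$.

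Then I would split the double integral above into the regions $|y| \le 1$ and $|y| > 1$. On $\{|y|\le 1\}$ I factor $g^2(x+y) - g^2(x) = \bigl( g(x+y) - g(x)\bigr)\bigl( g(x+y) + g(x)\bigr)$ and bound the second factor by $2\|g\|_\infty$, so that this contribution is at most $4\|g\|_\infty^2 \|g'\|_2^2 \int_{|y|\le 1} |y|^{2H}\, dy$, which is finite because $2H > -1$. On $\{|y|>1\}$ I use the crude bound $\int_\R |g^2(x+y) - g^2(x)|^2\, dx \le 4\|g^2\|_2^2$, which reduces the remaining integral to $4\|g^2\|_2^2 \int_{|y|>1} |y|^{-(2-2H)}\, dy$; this last integral converges precisely because $2 - 2H > 1$, i.e.\ because $H < \frac{1}{2}$. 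Adding the two bounds gives $\be\lc W(g^2)^2 \rc < \infty$, hence $g^2 \in \ch$ and we are done.

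The one genuinely delicate point is the contribution of large $|y|$: it is exactly here that the hypothesis $H < \frac{1}{2}$ is used in an essential way, since for $H \ge \frac{1}{2}$ the exponent $2-2H \le 1$ and the crude estimate diverges. A secondary, more technical, point is to make sure that finiteness of the right-hand side of \eqref{eq:cov_noise_homSob} genuinely places $g^2$ in the completion $\ch$, and does not merely bound a formal quadratic form; this is handled by approximating $g^2$ by functions in $\cd(\R)$ (mollify $g^2$ and, for unbounded $D$, truncate) and checking, with the same two-region estimate applied uniformly along the sequence, that the approximation is Cauchy in the $\ch$-norm.
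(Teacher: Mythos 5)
Your proof is correct, but it follows a genuinely different route from the paper's. The paper works entirely on the Fourier side: it writes $\textnormal{Var}[W(g^2)]=c_H\int_{\R}|\cf g^2(\xi)|^2|\xi|^{1-2H}d\xi$ as in \eqref{eq:varW(g^2)}, bounds $|\cf g^2(\xi)|$ by $1$ (using $\|g\|_2=1$) near the origin and by $2|\xi|^{-1}\|g'\|_2$ (via integration by parts) at infinity, and splits the integral at $|\xi|=1$ to obtain the explicit bound \eqref{eq:cfg^2-mu-L1}, namely $\frac{1}{1-H}+\frac{4\|g'\|_2^2}{H}$. You instead work on the physical side with the increment representation \eqref{eq:cov_noise_homSob}, splitting at $|y|=1$ and using the Sobolev embedding $W^{1,2}(\R)\hookrightarrow L^\infty(\R)$ together with the translation estimate $\|g(\cdot+y)-g\|_2\le|y|\|g'\|_2$. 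Both arguments are sound; your small/large $y$ estimates are the same style of computation the paper itself performs later in the proof of Lemma~\ref{lem:2}. Two points of comparison are worth noting. First, your far-field bound genuinely requires $H<\frac12$ (so that $2-2H>1$), whereas the Fourier argument works for all $H\in(0,1)$; this is harmless here since the paper assumes $H<\frac12$ throughout, but it makes the Fourier route slightly more robust. Second, the paper's proof is not only an existence statement: the quantitative Fourier bound \eqref{eq:cfg^2-mu-L1} is reused later to show that $\ce$ in \eqref{eq:ce} and $\si^2(g)$ in the proof of Proposition~\ref{prop:lb} are finite, so your approach, while valid for the lemma itself, would leave those later citations to be re-derived. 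Your closing remark about verifying that $g^2$ actually lies in the completion $\ch$ (rather than merely having a finite formal quadratic form) is a legitimate subtlety that the paper's proof passes over silently; the mollify-and-truncate approximation you sketch is the right way to close it.
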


\begin{proof}
Note that the variance of $W(g^2)$ is given by
\beq\label{eq:varW(g^2)}
\textnormal{Var} \lc W(g^2) \rc = c_H \int_{\R} {\lln \cf g^2(\xi) \rrn}^2 {\lln \xi \rrn}^{1-2H} d\xi.
\eeq
Also observe that for $g\in\ck(D)$ we have
$$
{\lln \cf g^2(\xi) \rrn} = {\lln \int_{\R} e^{-\imath\xi x} g^2(x) dx \rrn} \leq \int_{\R} {\lln g^2(x) \rrn} dx = 1.
$$
In addition, an elementary integration by parts argument shows that
$$
\int_{\R} e^{-\imath\xi x} g^2(x) dx = - i \int_{\R} \lp \dfrac{1}{\xi} \dfrac{dg^2}{dx} \rp e^{- i \xi x} dx.
$$
Hence for any $\xi \in \R$ and $g \in \ck(D)$ we get
$$
{\lln \cf g^2(\xi) \rrn} \leq {\lln \xi \rrn}^{-1} \int_{\R} {\lln \dfrac{dg^2}{dx} (x)\rrn} dx = 2 {|\xi|}^{-1} \int_{\R} |g(x)||g'(x)| dx \leq 2 {|\xi|}^{-1} {\|g'\|}_2,
$$
where the last inequality follows from Cauchy-Schwarz inequality and observing that ${\|g\|}_2=1$ for $g \in \ck(D)$. Let us now break up the variance in two parts by utilizing the two bounds just established.
\beq\label{eq:cfg^2-mu-L1}
\int_{\R} {\lln \cf g^2(\xi) \rrn}^2 {|\xi|}^{1-2H} d\xi \leq \int_{-1}^{1} {|\xi|}^{1-2H} d\xi + 4{\|g'\|}_2^2 \int_{|\xi|\geq 1} {|\xi|^{-(1+2H)}} d\xi = \dfrac{1}{1-H} + \dfrac{4{\|g'\|}_2^2}{H}
\eeq
We thus get that the variance of $W(g^2)$ is bounded and consequently $W(g^2)$ is finite almost surely. Coupled with the fact that ${\| g' \|}_2 < \infty$ whenever $g$ is an element of $\ck(D)$, we get that
$$
W(g^2) - \dfrac{1}{2} {\|g'\|}_2^2 < \infty \hspace{0.2in} \textnormal{a.s.}
$$
\end{proof}

\begin{remark} 
The following variational quantity will play a prominent role in our limiting results (see also \eqref{eq:ce-0} in Theorem~\ref{thm:Introthm-2}):
\beq\label{eq:ce}
\ce \equiv \sup_{g \in \cg(\R)} \int_{\R} {\lln\int_{\R} e^{\imath \la x}  g^2(x) dx \rrn}^2 {\lln \la \rrn}^{1-2H} d\la , 
\text{ where $\cg$ is defined by \eqref{eq:G}.}
\eeq
The computations of Lemma~\ref{lem:finite_W(g^2)} imply that $\ce$ is a finite quantity. Moreover, if we denote $\ce = \ce(\dot{W})$, then it is easily seen from relation \eqref{eq:varW(g^2)} that 
\beq\label{eq:ce-scaling}
\ce(p\dot{W}) = p^2 \ce(\dot{W})
\eeq
\end{remark}

%\begin{question}
%\hb{We should also prove that $\sup_{g \in \ck(Q_{r})} \lcl W(g^2) - \frac{1}{2} {\|g'\|}_2^2 \rcl <\infty$ for a given box $Q_{r}$. Is this easy to obtain?}
%\end{question}
The first result we need on Gaussian processes is an entropy type bound.
\begin{lemma}\label{lem:2}
Let $\dot{W}$ be the noise defined by \eqref{eq:cov_noise_Fourier}, and recall that $\cg(-\ep, \ep)$ is given by \eqref{eq:G} for all $\ep > 0$. Then we have:
$$
\lim_{\ep \to 0^+} \be \lc \sup_{g \in \cg(-\ep, \ep)} W(g^2) \rc = 0.
$$
\end{lemma}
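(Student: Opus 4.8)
The plan is to show that the supremum of the Gaussian field $W(g^2)$ over the rescaled ball $\cg(-\ep,\ep)$ tends to $0$ in $L^1$ by a scaling argument that reduces everything to the fixed domain $\cg(-1,1)$, followed by a Borell--TIS / Dudley entropy estimate combined with a vanishing variance bound. First I would record that the quantity is nonnegative: taking $g$ close to a fixed admissible bump gives $\be[\sup W(g^2)]\ge 0$, so only an upper bound is needed. Then I would exploit the self-similarity of the noise. If $g\in\cg(-\ep,\ep)$, set $g_\ep(x)=\sqrt{\ep}\,g(\ep x)$, which is supported on $(-1,1)$; using \eqref{eq:cov_noise_Fourier} with $\mu(d\xi)=c_H|\xi|^{1-2H}d\xi$ and the change of variables $\xi\mapsto\xi/\ep$, one finds $\mathrm{Var}[W(g^2)] = \ep^{2H}\,\mathrm{Var}[W(g_\ep^2)]$ (the scaling exponent coming from $1-2H$ in the spectral density and the normalization of $g_\ep$). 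Since the map $g\mapsto g_\ep$ does not preserve the constraint ${\|g\|}_2^2+\tfrac12{\|g'\|}_2^2=1$ exactly but turns it into ${\|g_\ep\|}_2^2+\tfrac{\ep^2}{2}{\|g_\ep'\|}_2^2=1$, I would instead work with the larger set $\tilde\cg:=\{h\in\cs(-1,1):{\|h\|}_2^2+\tfrac12{\|h'\|}_2^2\le 1\}$, which contains $g_\ep$ for all $\ep\le1$, so that
\[
\be\Big[\sup_{g\in\cg(-\ep,\ep)}W(g^2)\Big] \;\le\; \ep^{H}\,\be\Big[\sup_{h\in\tilde\cg}W_{*}(h^2)\Big],
\]
where $W_*$ is a copy of the noise with the $\ep$-scaling factored out (more precisely one rewrites the rescaled field as $\ep^H$ times a field with the original covariance, using homogeneity).

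Next I would argue that $\be[\sup_{h\in\tilde\cg}W_*(h^2)]$ is finite, which makes the displayed inequality conclude the proof. For this I would invoke Dudley's entropy bound: the Gaussian field $h\mapsto W_*(h^2)$ on $\tilde\cg$ has canonical metric $d(h_1,h_2)^2 = \mathrm{Var}[W_*(h_1^2)-W_*(h_2^2)] = c_H\int_\R|\cf(h_1^2-h_2^2)(\xi)|^2|\xi|^{1-2H}d\xi$. Using $\cf(h_1^2-h_2^2)=\cf((h_1-h_2)(h_1+h_2))$, the bound $|\cf(fg)(\xi)|\le{\|fg\|}_1$ for low frequencies and the integration-by-parts bound $|\cf(fg)(\xi)|\le|\xi|^{-1}{\|(fg)'\|}_1$ for high frequencies — exactly the two estimates already carried out in the proof of Lemma~\ref{lem:finite_W(g^2)} — one gets $d(h_1,h_2)\lesssim {\|h_1-h_2\|}_{W^{1,2}}$ uniformly over $h_1,h_2\in\tilde\cg$. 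Since $\tilde\cg$ is a bounded subset of $W^{1,2}_0(-1,1)$ and the embedding $W^{1,2}_0(-1,1)\hookrightarrow C([-1,1])$ is compact, the metric entropy $\log N(\tilde\cg,d,r)$ is integrable in $r$ near $0$ (one can in fact bound it by the entropy of a ball in a fractional Sobolev space of positive smoothness), so Dudley's theorem gives $\be[\sup_{h\in\tilde\cg}W_*(h^2)]<\infty$.

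Combining the two steps, $\be[\sup_{g\in\cg(-\ep,\ep)}W(g^2)]\le C\,\ep^{H}\to 0$ as $\ep\to0^+$, which together with the nonnegativity established at the start yields the claim.

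\emph{Main obstacle.} The delicate point is the entropy/finiteness estimate for $\sup_{h\in\tilde\cg}W_*(h^2)$: one must be careful that although $\tilde\cg$ is compact in $C([-1,1])$, the field is indexed by $h^2$ and the metric $d$ involves a Sobolev-type norm of $h^2$, so the reduction $d(h_1,h_2)\lesssim{\|h_1-h_2\|}_{W^{1,2}}$ needs the uniform $L^\infty$ and $W^{1,2}$ bounds on elements of $\tilde\cg$ (which do hold since $\tilde\cg$ is bounded in $W^{1,2}_0$), and then one needs an entropy bound for balls in $W^{1,2}_0(-1,1)$ under a slightly weaker norm — this is classical (Birman--Solomjak / Triebel entropy numbers) but must be quoted correctly. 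A cleaner alternative that avoids entropy numbers altogether: bound $\sup_{h\in\tilde\cg}|W_*(h^2)|$ pathwise by $\sup_{h\in\tilde\cg}\|h^2\|_{\cb^{\ka}_{1,1}}$ times $\|\dot W\|_{\cb^{-\ka}_{\infty,\infty}}$ for suitable $\ka\in(1-H,1)$ using the duality pairing and the Besov regularity of $\dot W$ already established, and note $\sup_{h\in\tilde\cg}\|h^2\|_{\cb^{\ka}_{1,1}}<\infty$ since $\tilde\cg$ is bounded in $W^{1,2}_0(-1,1)$; this turns the whole lemma into the scaling computation plus an application of the noise regularity proposition. I would present the argument along these lines, since it is the shortest route given what is already available in the paper.
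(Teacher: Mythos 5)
There is a genuine gap, and it sits at the heart of your argument: the scaling identity for the variance is wrong, and wrong in the direction that kills the proof. With $g_\ep(x)=\sqrt{\ep}\,g(\ep x)$ one has $\cf g_\ep^2(\xi)=\cf g^2(\xi/\ep)$, and the change of variables in $\int_\R|\cf g^2(\xi)|^2|\xi|^{1-2H}d\xi$ gives
$\textnormal{Var}[W(g^2)]=\ep^{2H-2}\,\textnormal{Var}[W(g_\ep^2)]$, not $\ep^{2H}\,\textnormal{Var}[W(g_\ep^2)]$ (equivalently, the field rescales by $\ep^{H-1}$, not $\ep^{H}$, consistent with $\dot W$ having scaling exponent $H-1$). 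Since $\ep^{H-1}\to\infty$, your central inequality $\be[\sup_{g\in\cg(-\ep,\ep)}W(g^2)]\le \ep^{H}\be[\sup_{h\in\tilde\cg}W_*(h^2)]$ becomes $\le C\ep^{H-1}$, which is useless. Relatedly, the constraint transforms into ${\|g_\ep\|}_2^2+\tfrac{1}{2\ep^2}{\|g_\ep'\|}_2^2=1$ (not $\tfrac{\ep^2}{2}$ as you wrote), so the image of $\cg(-\ep,\ep)$ is the set of $h$ supported in $(-1,1)$ with ${\|h'\|}_2^2\le 2\ep^2$ and hence, by Poincar\'e, ${\|h\|}_2\lesssim\ep$. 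This shrinking of the index set is precisely where the smallness lives; by enlarging to the fixed set $\tilde\cg$ you discard it, and the (incorrect) prefactor $\ep^{H}$ cannot replace it. A correct version of your route would keep the image set and show, e.g., ${\|h^2\|}_1\lesssim\ep^2$ and ${\|(h^2)'\|}_1\lesssim\ep^2$ on it, which does yield $\textnormal{Var}[W(g^2)]\lesssim\ep^{2+2H}$ uniformly — but that is then essentially a direct variance bound, not a reduction to a fixed supremum.

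For comparison, the paper's proof does exactly the two things your sketch needs but does not deliver: (i) it quotes the entropy result of \cite[Proposition 2.1]{chen-ann14} on the \emph{fixed} compact set $\cg(Q_1)$, in the quantitative form that the expected supremum over the subset of functions with $\E[W(g^2)]^2\le\der$ tends to $0$ as $\der\downarrow0$ (this is the uniform-continuity statement that makes a vanishing variance imply a vanishing expected supremum — plain finiteness of $\be[\sup_{\tilde\cg}W(h^2)]$ would not suffice); and (ii) it proves $\sup_{g\in\cg(Q_\ep)}\E[W(g^2)]^2\to0$ directly from the real-space covariance formula \eqref{eq:cov_noise_homSob}, via a three-region decomposition and Sobolev embedding. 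Your Dudley step is also only asserted: the bound $d(h_1,h_2)\lesssim{\|h_1-h_2\|}_{W^{1,2}}$ alone gives nothing, since the unit ball of $W^{1,2}_0(-1,1)$ is not totally bounded in its own norm; one must dominate $d$ by a strictly weaker (fractional) Sobolev norm before entropy numbers apply. As written, the proposal does not prove the lemma.
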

\begin{proof}
The beginning of the proof is similar to \cite[Lemma 2.2]{chen-ann14}, and we will skip the details for sake of conciseness. Indeed, one can mimic the entropy arguments developed in \cite[Proposition 2.1]{chen-ann14} and show that
$$
\lim_{\der \downarrow 0} \E \sup \lcl W(g^2) ; g \in \cg(Q_1)\text{ and }\E [W(g^2)]^2 \leq \der\rcl = 0.
$$
Then, still following the steps of \cite[Lemma 2.2]{chen-ann14}, it suffices to show that
\beq\label{eq:lem2-1}
\lim_{\ep \downarrow 0} \sup_{g \in \cg(Q_{\ep})} \E {\lc W(g^2) \rc}^2 = 0.
\eeq
To establish \eqref{eq:lem2-1} we use the alternate expression for our covariance function as in \eqref{eq:cov_noise_homSob}, which yields the following expression for all functions $g \in \cg(Q_{\ep})$:
\beq\label{eq:lem2-a}
\E {\lc W(g^2) \rc}^2 = c_H \int_{\R} \int_{\R} \dfrac{{|g^2(x+y) - g^2(x)|}^2}{|y|^{2-2H}} dx dy.
\eeq
Since the domain of any function $g \in \cg(Q_{\ep})$ is contained in $Q_{\ep}$, let us break the right hand side of \eqref{eq:lem2-a} into three parts by integrating over three regions ${\lbrace R_i \rbrace}_{i=1,2,3}$, where
\begin{align*}
R_1 &= \lcl (x,y): |x| \leq \ep, |x+y| \leq \ep \rcl,\\
R_2 &= \lcl (x,y): |x| \leq \ep, |x+y| > \ep \rcl,\\
R_3 &= \lcl (x,y): |x| > \ep, |x+y|  \leq \ep \rcl.
\end{align*}
Consequently, 
$$\E[W(g^2)]^2 = I_{1, \ep} + I_{2, \ep} + I_{3,\ep},$$
where
$$
I_{i,\ep} = c_H \int_{R_i} \dfrac{|g^2(x+y)-g^2(x)|^2}{|y|^{2-2H}} dx dy.
$$
Let us now work  with each integral $I_{i,\ep}$ in succession. In order to upper-bound $I_{1, \ep}$, observe that
$$
{\lln g^2(x+y) - g^2(x) \rrn} = \lln g(x+y) + g(x) \rrn \lln g(x+y) - g(x) \rrn,
$$
and that
\begin{align}\label{eq:lem2-2}
\lln g(x+y) - g(x) \rrn &= \lln \int_x^{x+y} g'(z) dz \rrn\nonumber \\
&\leq \sqrt{\lln \int_x^{x+y} {\lln g'(z) \rrn}^2 dz \rrn |y|} \leq {\|g'\|}_2 \sqrt{|y|},
\end{align}
by an application of the Cauchy-Schwarz inequality. Thus the integrand in $I_{1, \ep}$ can be upper-bounded as follows:
\begin{align}\label{eq:lem2-3}
\dfrac{{\lln g^2(x+y) - g^2(x) \rrn}^2}{|y|^{2-2H}} &= \dfrac{{\lln g(x+y) + g(x) \rrn}^2{\lln g(x+y) - g(x) \rrn}^2}{|y|^{2-2H}}\nonumber\\
&\leq \dfrac{2 \lp g^2(x+y) + g^2(x) \rp {\|g'\|}_2^2 |y|}{|y|^{2-2H}},
\end{align}
where we have used \eqref{eq:lem2-2} and the fact that ${|a+b|}^2 \leq 2(a^2 + b^2)$. Plugging \eqref{eq:lem2-3} in $I_{1,\ep}$ and using the fact that ${\|g'\|}_2^2 \leq 2$ for every $g \in \cg(Q_{\ep})$, we obtain:
\begin{align}\label{eq:lem2-4}
I_{1,\ep} &\leq 4c_H \int_{R_1} \dfrac{g^2(x+y) + g^2(x)}{{|y|}^{1-2H}}dx dy \nonumber \\
&= 4c_H \lc \int_{-\ep}^{\ep} dx \int_{-\ep - x}^{\ep - x} dy~ \dfrac{g^2(x+y)}{|y|^{1-2H}} + \int_{-\ep}^{\ep} dx \int_{-\ep-x}^{\ep-x}dy~ \dfrac{g^2(x)}{|y|^{1-2H}} \rc \nonumber \\
&\leq 4c_H \lc \int_{-\ep}^{\ep} dx \int_{-\ep}^{\ep} dz~ \dfrac{g^2(z)}{|z-x|^{1-2H}} + \int_{-\ep}^{\ep}dx~ g^2(x)\int_{-2\ep}^{2\ep} dy~ \dfrac{1}{|y|^{1-2H}}\rc.
\end{align}
Let us now recall some basic analytic facts taken from \cite[Chapter 4]{adams}: the Sobolev space $W^{1,2}$ is embedded in any $L^k(\R)$ for all $k \geq 2$. More specifically, for all $k \geq 2$ we have
\beq\label{eq:lem2-b}
{\|g\|}_{L^k(\R)} \leq c_k {\|g\|}_{W^{1,2}(\R)},
\eeq
where $c_k$ is a positive constant independent of $g$.

We shall invoke \eqref{eq:lem2-b} in order to bound the first integral in the right hand side of \eqref{eq:lem2-4}. Namely, apply H\"older's inequality with two conjugate numbers $p$ and $q$, which gives 
$$
\int_{(-\ep, \ep)^2} \dfrac{g^2(z)}{{|z-x|}^{1-2H}}dx dz \leq {\lp \int_{(-\ep, \ep)^2} {|g(z)|}^{2p} dx dz \rp}^{\frac{1}{p}} {\lp \int_{(-\ep, \ep)^2} \dfrac{dx dz}{{|z-x|}^{(1-2H)q}} \rp}^{\frac{1}{q}}.
$$ 
We now take a small constant $\der > 0$ and $q = \frac{1-\der}{1-2H}$, which means that $p = \frac{1-\der}{2H-\der}$. Then inequality \eqref{eq:lem2-b} plus some elementary computations show that for $\ep < 1$
$$
\int_{(-\ep, \ep)^2} \dfrac{g^2(z)}{{|z-x|}^{1-2H}} \leq c_{H, \der} {\|g\|}_{W^{1,2}(\R)}^2 \ep \leq 3 c_{H, \der} \ep,
$$
where we resort to the fact that ${\|g\|}_{W^{1,2}(\R)}^2 \leq 3$ whenever $g \in \cg({Q_{\ep}})$ for the last inequality. Using this information in \eqref{eq:lem2-4} {and noting that the second term in \eqref{eq:lem2-4} is bounded thanks to elementary considerations,} we obtain:
\beq\label{eq:lem2-5}
I_{1, \ep} \leq c_H \lp 3c_{H, \der} \ep + {\|g\|}_2^2 {\ep}^{2H} \rp \leq c_{H{,\der}}{{\ep}^{2H}}.
\eeq

Let us now work with $I_{2, \ep}$ and $I_{3, \ep}$. Observe that $I_{2, \ep}$ can be expressed as follows:
\begin{align*}
I_{2, \ep} &= c_H\int_{R_2} \dfrac{{\lln g^2(x+y) -g^2(x) \rrn}^2}{{|y|}^{2-2H}}dx dy \\
&= c_H\int_{R_2} \dfrac{g^4(x)}{|y|^{2-2H}} dx dy\\
&= c_H\int_{-\ep}^{\ep} g^4(x) \lc \int_{-\infty}^{-\ep-x} \dfrac{dy}{|y|^{2-2H}} + \int_{\ep -x}^{\infty} \dfrac{dy}{|y|^{2-2H}}\rc dx\\
&= \dfrac{c_H}{1-2H} \int_{-\ep}^{\ep} g^4(x) \lc \dfrac{1}{(\ep - x)^{1-2H}} + \dfrac{1}{(\ep+x)^{1-2H}}\rc dx.
\end{align*}
We let the patient reader check that the same kind of identity holds for $I_{3, \ep}$. Thus, we find that
\beq\label{eq:lem2-c}
I_{2, \ep} + I_{3, \ep} \leq \dfrac{2c_H}{1-2H} \int_{-\ep}^{\ep} g^4(x) \lc \dfrac{1}{(\ep - x)^{1-2H}} + \dfrac{1}{(\ep + x)^{1-2H}} \rc dx.
\eeq
In order to bound the right hand side of \eqref{eq:lem2-c}, we use the same strategy as for $I_{1, \ep}$. Namely, for $p, q \geq 1$ satisfying $\frac{1}{p} + \frac{1}{q} = 1$, H\"older's inequality imply
$$
I_{1, \ep} + I_{2, \ep} \leq \dfrac{2c_H}{1-2H} {\lp \int_{-\ep}^{\ep} g^{4p}(x)dx \rp}^{\frac{1}{p}} \lc {\lp \int_{-\ep}^{\ep} \dfrac{dx}{(\ep-x)^{(1-2H)q}} \rp}^{\frac{1}{q}}  + {\lp \int_{-\ep}^{\ep} \dfrac{dx}{(\ep + x)^{(1-2H)q}} \rp}^{\frac{1}{q}}\rc.
$$ {As before,} let us now fix {$q=\frac{1-\der}{1-2H}$}.  This implies that the integrals $\int_{-\ep}^{\ep}{(\ep \pm x)^{-(1-2H)q}} dx$ are finite and each is equal to {$c_{\der} \ep^{1-\der}$ for a universal constant $c_{\der}$}. Putting together this information, we find
$$
I_{2, \ep} + I_{3, \ep} \leq {c_{H, \der} {\|g\|}_{4p}^4 \ep^{\frac{\der}{q}}}.
$$
Moreover, a second usage of equation~\eqref{eq:lem2-b} plus the fact that ${\|g\|}_{W^{1,2}(\R)} \leq \sqrt{3}$ yield:
$$
{\|g\|}_{4p} \leq c_p {\|g\|}_{W^{1,2}} \leq c_p \sqrt{3}.
$$
Thus, we obtain:
\beq\label{eq:lem2-6} 
I_{2, \ep} + I_{3, \ep} \leq {c_{H, \der} {\ep^{\frac{\der}{q}}}}.
\eeq
Combining the inequalities \eqref{eq:lem2-5} and \eqref{eq:lem2-6} we find that  
$$
I_{1, \ep} + I_{2, \ep} + I_{3, \ep} \leq {c_{H, \der} \ep^{\nu}}, 
$$
{for a given $\nu > 0$}, uniformly for all $g \in \cg(Q_{\ep})$. Therefore we get 
$$\
\lim_{\ep \downarrow 0} {I_{1,\ep} + I_{2,\ep} + I_{3, \ep}} = 0.
$$
We have thus proved \eqref{eq:lem2-1}. 
\end{proof}

We now introduce the scalings which will be needed in our future computations. 
\begin{notation}\label{not:g_t+h_t}
For fixed $u>0$ and $t > 0$ we introduce a scaling coefficient $h_t$ defined as:
\beq\label{eq:h_t}
h_t = \sqrt{u}(\log t)^{\frac{1}{2(1+H)}}. 
\eeq
Then for each $g \in \cs(\R)$, we define a $L^2(\R)$-rescaled function $g_t$ as follows:
\beq\label{eq:g_t}
g_t(x) = \sqrt{h_t}g(h_t x). 
\eeq
Also, denote by $Q_r$ the open interval $(-r,r)$.
\end{notation}

We now see how to rescale the principal eigenvalues related to $\dot{W}$ in boxes of the form $Q_r$.
\begin{lemma}\label{lem:scaled-eigen}
Let $\dot{W}$ be the Gaussian noise defined by \eqref{eq:cov_noise_Fourier}. For a box $Q_t = (-t, t)$, recall that $\la_{\dot{W}}(Q_t)$ is given by formula \eqref{eq:la}. Then the following relation holds true:
\beq\label{eq:scaled-eigen}
\la_{\dot{W}}(Q_t) = h_t^2 \sup_{g \in \ck(Q_{th_t})} \lcl \frac{1}{h_t^2} W(g_t^2) - \frac{1}{2}\int_{Q_{t{h_t}}} {|g'(x)|}^2 dx \rcl,
\eeq
where the quantities $h_t$ and the function $g_t$ are introduced in Notation~\ref{not:g_t+h_t}.
\end{lemma}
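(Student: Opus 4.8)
The plan is to prove \eqref{eq:scaled-eigen} as a pure change-of-variables identity, the point being that the rescaling $g \mapsto g_t$ of Notation~\ref{not:g_t+h_t} is an $L^2$-isometric bijection between the constraint sets $\ck(Q_{th_t})$ and $\ck(Q_t)$, and that it transforms the Dirichlet energy in a controlled way while leaving the noise term untouched.

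First I would record the elementary properties of the map $g \mapsto g_t$, where $g_t(x) = \sqrt{h_t}\, g(h_t x)$. The substitution $y = h_t x$ gives $\|g_t\|_2^2 = \int_\R h_t\, g(h_t x)^2\,dx = \int_\R g(y)^2\,dy = \|g\|_2^2$, so the constraint $\|g\|_2 = 1$ is preserved. If $g \in \cs(Q_{th_t})$, i.e.\ $g$ is infinitely smooth and vanishes outside $(-th_t, th_t)$, then $g_t$ is infinitely smooth and vanishes outside $(-t,t)$, hence $g_t \in \cs(Q_t)$; and the inverse map $f \mapsto \big(x \mapsto h_t^{-1/2} f(x/h_t)\big)$ sends $\ck(Q_t)$ back into $\ck(Q_{th_t})$. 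Thus $g \mapsto g_t$ is a bijection $\ck(Q_{th_t}) \to \ck(Q_t)$, and in the supremum over $f \in \ck(Q_t)$ defining $\la_{\dot{W}}(Q_t)$ in \eqref{eq:la} we may substitute $f = g_t$ and take the supremum over $g \in \ck(Q_{th_t})$ instead.

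Next I would compute how the two terms in the functional transform. The noise term requires nothing: $W(f^2) = W(g_t^2)$ identically, and this quantity is almost surely finite by Lemma~\ref{lem:finite_W(g^2)}. For the energy term, the chain rule gives $g_t'(x) = h_t^{3/2} g'(h_t x)$, so $|g_t'(x)|^2 = h_t^3 |g'(h_t x)|^2$, and the substitution $y = h_t x$ yields $\int_{Q_t} |g_t'(x)|^2\,dx = h_t^2 \int_{Q_{th_t}} |g'(y)|^2\,dy$ (the integration domains match because $g_t'$ vanishes outside $Q_t$ whenever $g$ is supported in $Q_{th_t}$). Substituting both into $W(f^2) - \tfrac12 \int_{Q_t} |f'(x)|^2\,dx$ gives $W(g_t^2) - \tfrac{h_t^2}{2}\int_{Q_{th_t}} |g'(x)|^2\,dx$, and pulling $h_t^2$ out of the supremum produces exactly the right-hand side of \eqref{eq:scaled-eigen}.

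I do not expect any genuine obstacle here: the statement is a bookkeeping identity about scaling. The only places demanding a little care are (i) verifying that the boundary-vanishing/support condition is preserved in \emph{both} directions under $g \mapsto g_t$, so that the correspondence between the constraint sets is a true bijection and the two suprema agree term by term, and (ii) tracking the power of $h_t$ through the chain rule, where the $\sqrt{h_t}$ prefactor of $g_t$ combines with the dilation by $h_t$ to produce the $h_t^{3/2}$ in $g_t'$ and hence the net factor $h_t^2$ after integrating $|g_t'|^2$. Once these are in place the identity is immediate.
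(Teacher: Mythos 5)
Your proposal is correct and follows essentially the same route as the paper: identify $g \mapsto g_t$ as an $L^2$-isomorphism (bijection) between $\ck(Q_{th_t})$ and $\ck(Q_t)$, substitute into the supremum defining $\la_{\dot W}(Q_t)$, and use the chain rule plus the change of variables $y = h_t x$ to convert $\int_{Q_t}|g_t'|^2$ into $h_t^2\int_{Q_{th_t}}|g'|^2$. Your extra care in checking that the support condition is preserved in both directions is a detail the paper leaves implicit, but it does not change the argument.
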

\begin{proof}
Notice that the map $g \mapsto g_t$ when defined from $\ck(Q_{th_t})$ to $\ck(Q_t)$ is a $L^2(\R)$-isomorphism between the two spaces. As a consequence, $\sup_{g \in \ck(Q_t)} A(g) = \sup_{g \in \ck(Q_{th_t})} A(g_t)$ for any general functional $A$ defined on a domain included in $L^{2}(\R)$. Hence,
$$
\la_{\dot{W}}(Q_t) = \sup_{g \in \ck(Q_t)} \lcl W(g^2)-\frac{1}{2}\int_{Q_t} {|g'(x)|}^2 dx \rcl = \sup_{g \in \ck(Q_{th_t})} \lcl W(g_t^2)-\frac{1}{2}\int_{Q_t} {|g_t'(x)|}^2 dx \rcl .
$$
Also, since $g_t'(x) =  {h_t}^{3/2}  g'(h_t x)$, we get
$$\int_{Q_t} {\lln g_t'(x) \rrn}^2 dx = \int_{Q_t} h_t^3 {\lln g'(h_t x) \rrn}^2 dx = h_t^2 \int_{Q_{th_t}} {\lln g'(y) \rrn}^2 dy,$$
where the second identity is due to an elementary change of variable. Consequently,
\begin{align}
\la_{\dot{W}}(Q_t) &= \sup_{g \in \ck(Q_t)} \lcl W(g^2)-\frac{1}{2}\int_{Q_t} {|g'(x)|}^2 dx \rcl \nonumber \\
&= h_t^2 \sup_{g \in \ck(Q_{th_t})} \lcl \frac{1}{h_t^2} W(g_t^2) - \frac{1}{2}\int_{Q_{t{h_t}}} {|g'(x)|}^2 dx \rcl,
\end{align}
which is our claim.
\end{proof}

\begin{remark}\label{rmk:scaling}
One can justify the scaling by $h_t$ given by \eqref{eq:h_t} in the following way: let us start with the rescaled version \eqref{eq:scaled-eigen} of $\la_{\dot{W}} (Q_t)$, which is valid for any weight $h_t$. In addition, we will see in Section~\ref{sec:UB} that the main quantity we should handle in \eqref{eq:scaled-eigen} is the family $\lcl h_t^{-2} W(g_t^2); t \geq 0 \rcl$ and we want this family of Gaussian random variables to remain stochastically bounded in $t$ as $t \to \infty$. Next an elementary computation (see \eqref{eq:si_t^2_bnd} below for more details) reveals that for all $g \in \ck$ we have
\beq\label{eq:si_t,g^2}
\si_{t,g}^2 \equiv \textnormal{Var} \lc {h_t^{-2}} W(g_t^2) \rc = c_{H, g} h_t^{-2(1+H)}.
\eeq
Due to the Gaussian nature of $h_t^{-2} W(g_t^2)$, we thus have (for all $x > 0$)
\beq\label{eq:tail_prob_ht^2W(gt^2)}
\mathbf{P} \lp h_t^{-2} \lln W(g_t^2) \rrn > x \rp  \leq c_1 e^{-{c_2 x^2}/{\si_{t,g}^2}}.
\eeq
A natural way to have the family $\lbrace h_t^{-2} W(g_t^2) ; t \geq 0  \rbrace$ stochastically bounded  is thus to pick the minimal $h_t$ such that one can use Borel-Cantelli in the right hand side of \eqref{eq:tail_prob_ht^2W(gt^2)}. It is readily checked that this is achieved as long as $\si_{t,g}^{-2}$ is of order $\log t$. Recalling the expression~\eqref{eq:si_t,g^2} for $\si_{t,g}^2$, this yields $h_t$ of order ${(\log t)}^{{1}/{2(1+H)}}$. 
\end{remark}

In the following two subsections we explore the long-time asymptotics of $\la_{\dot{W}} (Q_t)$. More precisely, we will try to prove the following:
\beq\label{eq:la_asymptotics}
\lim_{t \to \infty} 
\frac{\la_{\dot{W}}(Q_t)}{(\log t)^{1/(1+H)}} = {\lp 2 c_H \ce \rp}^{{1}/{1+H}} \hspace{0.2in} \textnormal{a.s.} 
\eeq

\subsection{Upper Bound}\label{sec:UB}

In order to get the upper bound part of \eqref{eq:la_asymptotics} we rely on the general idea that principal eigenvalues over a large domain can be essentially bounded by the maximum value among the principal eigenvalues on some sub-domains. See \cite[Proposition~1]{gartner-konig} where this result is proved when the potential is defined pointwise. In \cite{chen-ann14} the same result is stated to be true for generalized functions as well. We start with an elementary lemma whose proof is very similar to the aforementioned references.

\begin{lemma}\label{lem:eigen-gartner}
Let $r > 0$. There exists a non-negative continuous function $\Phi(x)$ on $\R$ whose support is contained in the $1-$neighborhood of the grid $2r\Z$, such that for any $R>r$ and any generalized function $\xi$,
\beq\label{eq:eigen-1}
\la_{\xi - \Phi^y}(Q_R) \leq \max_{z \in 2r\Z \cap Q_R} \la_{\xi}(z+Q_{r+1}), \qquad \text{for all }y\in Q_r,
\eeq
where $\Phi^y(x) = \Phi(x+y)$.
In addition $\Phi(x)$ is periodic with period $2r$, namely
\beq\label{eq:eigen-1.5}
\Phi(x+2rz) = \Phi(x),~~~x \in \R,~z\in\Z,
\eeq
and there is a constant $K>0$ independent of $r$ such that 
\beq\label{eq:eigen-2}
\dfrac{1}{2r} \int_{Q_r} \Phi(x)dx \leq \dfrac{K}{r}.
\eeq
\end{lemma}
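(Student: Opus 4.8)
The plan is to construct $\Phi$ explicitly as a periodic bump function adapted to the grid $2r\Z$, and then verify that subtracting it from any environment $\xi$ forces the principal eigenvalue over $Q_R$ to localize on one of the cells $z+Q_{r+1}$.

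First I would build a single continuous non-negative profile $\phi_0$ supported in $[-1,1]$, with $\phi_0 \equiv M$ on a suitable sub-interval and $\phi_0 = 0$ outside, where $M$ is a large constant to be chosen. Then I would set $\Phi(x) = \sum_{z \in \Z} \phi_0(x - 2rz)$, which is manifestly periodic with period $2r$ and supported in the $1$-neighborhood of $2r\Z$. The bound \eqref{eq:eigen-2} is then immediate: over one period the integral of $\Phi$ equals $\int_{-1}^{1}\phi_0 \le 2M$, so $\frac{1}{2r}\int_{Q_r}\Phi \le \frac{M}{r}$, giving \eqref{eq:eigen-2} with $K = M$ independent of $r$. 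The scaling constant $M$ will ultimately be forced by the localization requirement, but it does not depend on $r$, so this is fine.

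The heart of the matter is \eqref{eq:eigen-1}. Fix $y \in Q_r$ and consider any $g \in \ck(Q_R)$ in the variational problem \eqref{eq:la} for $\la_{\xi - \Phi^y}(Q_R)$. The shifted bumps $\Phi^y(\cdot) = \Phi(\cdot + y)$ are centered near the points $2r\Z - y$, and since these are spaced $2r$ apart while each bump has width $2$, for $r$ large the complement of $\bigcup_z \mathrm{supp}(\phi_0(\cdot + y - 2rz))$ inside $Q_R$ is a disjoint union of intervals, each of which (after adjusting endpoints) sits inside some translate $z + Q_{r+1}$ with $z \in 2r\Z \cap Q_R$. The key mechanism: on the support of the bumps the term $-\Phi^y(x) g^2(x) \le -M g^2(x)$ contributes a large negative quantity to the functional $\xi(g^2) - \frac12\|g'\|_2^2 - \Phi^y(g^2)$, unless $g$ is essentially zero there. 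Making this rigorous is where the main work lies: one chooses $M$ so large that the penalty outweighs any gain, then uses a smooth cut-off/partition-of-unity argument (subordinate to the cells) to decompose $g = \sum_k g_k$ with each $g_k$ supported near one cell, paying only a controlled amount in the Dirichlet energy $\|g'\|_2^2$ (the gradient of the cut-offs is $O(1)$ on a set of length $O(1)$ per junction, again harmless for large $M$). Then $\xi(g^2) - \Phi^y(g^2) - \frac12\|g'\|_2^2 \le \sum_k \big(\xi(g_k^2) - \frac12\|g_k'\|_2^2\big) \le \sum_k \|g_k\|_2^2 \cdot \la_\xi(z_k + Q_{r+1}) \le \max_z \la_\xi(z+Q_{r+1})$, after normalizing and using $\sum_k \|g_k\|_2^2 = 1$. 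Taking the supremum over $g$ yields \eqref{eq:eigen-1}.

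\textbf{The main obstacle} I anticipate is the decomposition step: turning an arbitrary competitor $g$ over the big box $Q_R$ into a sum of competitors over the individual cells without losing more in the Dirichlet energy than the bump penalty can absorb, and doing so uniformly in the shift parameter $y \in Q_r$. This is exactly the point where the constant $M$ (hence $K$) gets pinned down, and where one must be careful that the argument does not secretly depend on $r$ or $R$. The references \cite{gartner-konig} and \cite{chen-ann14} carry this out for pointwise potentials; since here $\xi$ is only a distribution, one must additionally check that $\xi(g_k^2)$ makes sense and that the inequality $\xi(g_k^2) - \frac12\|g_k'\|_2^2 \le \|g_k\|_2^2\,\la_\xi(z_k+Q_{r+1})$ is legitimate — but this follows directly from the definition \eqref{eq:la} of $\la_\xi$ together with the $L^2$-scaling already used in Lemma~\ref{lem:scaled-eigen}, applied to $g_k/\|g_k\|_2 \in \ck(z_k+Q_{r+1})$, so it requires no new regularity input beyond what Lemma~\ref{lem:finite_W(g^2)} provides.
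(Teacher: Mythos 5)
The paper itself offers no proof of this lemma (it defers to \cite{gartner-konig} and \cite{chen-ann14}), and your strategy --- a $2r$-periodic bump function plus a localization of the variational problem \eqref{eq:la} onto the cells --- is exactly the scheme used there. Two points in your sketch need tightening, and they are precisely the ``main obstacle'' you flag. First, the decomposition must be quadratic rather than linear: take a smooth partition $\sum_k \zeta_k^2 \equiv 1$ with each $\zeta_k$ supported in the $k$-th cell and set $g_k = \zeta_k g$; then $\sum_k \xi(g_k^2) = \xi(g^2)$ and $\sum_k \|g_k\|_2^2 = \|g\|_2^2 = 1$ hold exactly, whereas with $g = \sum_k g_k$ neither identity is available. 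The localization cost is then computed by the IMS identity
$\sum_k \tfrac12\|(\zeta_k g)'\|_2^2 = \tfrac12\|g'\|_2^2 + \tfrac12\int \bigl(\sum_k |\zeta_k'(x)|^2\bigr) g^2(x)\,dx$,
the cross terms cancelling because $\sum_k \zeta_k\zeta_k' = \tfrac12\bigl(\sum_k \zeta_k^2\bigr)' = 0$. Second, this identity shows what $M$ really has to do: one needs the pointwise bound $\Phi^y \geq \tfrac12\sum_k|\zeta_k'|^2$, which requires (i) choosing the $\zeta_k$ adapted to the shift $y$ so that their transition zones lie inside the support of $\Phi^y$, and (ii) $M \geq \sup_x \tfrac12\sum_k|\zeta_k'(x)|^2$, a universal constant because the transitions occur over intervals of length of order $1$. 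Your phrase ``choose $M$ so large that the penalty outweighs any gain'' is therefore misleading: no constant can dominate an arbitrary generalized function $\xi$, and none is needed --- $M$ only has to absorb the cutoff-gradient term, which is independent of $\xi$, $r$, $R$ and $y$, and this is what makes $K$ in \eqref{eq:eigen-2} independent of $r$.

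One further check you assert but do not verify: that each ($y$-dependent) cell, fattened by the transition width, actually sits inside a single box $z + Q_{r+1}$ with $z \in 2r\Z$. For a general shift $y \in Q_r$ the fattened cell has the same length $2r+2$ as the box but a different center, so the containment as literally stated requires either a slight enlargement of the boxes or a corresponding adjustment of the grid over which the maximum in \eqref{eq:eigen-1} is taken. This is harmless for the way the lemma is used (only the order of magnitude in $r$ matters, and $r \to \infty$ at the end), but it is the kind of bookkeeping that must be made explicit for the statement to hold verbatim. With these corrections your final chain of inequalities yields \eqref{eq:eigen-1}, and \eqref{eq:eigen-2} follows as you say.
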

We now show how to split the upper bound for the principal eigenvalue $\la_{\dot{W}}(Q_t)$ into small subsets.
\begin{lemma}
Let $W$ be the noise defined by \eqref{eq:b} and $Q_t = (-t, t)$. We consider the principal eigenvalue $\la_{\dot{W}}(Q_t)$ given by \eqref{eq:la}. Recalling that $h_t$ is given by \eqref{eq:h_t}, the following inequality holds true:
\beq \label{eq:la_X_z-ineq}
\la_{\dot{W}}(Q_t) \leq h_t^2 \lp \dfrac{K}{r} + \max_{z \in 2r\Z \cap Q_{th_t}} X_z(t) \rp,
\eeq
where the random field $\lcl X_z(t); z \in 2r\Z, t \geq 0 \rcl$ is defined by:
\beq\label{eq:X-z}
X_z(t) = \sup_{g \in \ck(z+Q_{r+1})} \lcl \dfrac{W(g_t^2)}{h_t^2} - \dfrac{1}{2}\int_{Q_{th_t}} {|g'(x)|}^2 dx \rcl.
\eeq
In \eqref{eq:X-z}, the set $\ck(z+Q_{r+1})$ is given by \eqref{eq:K} and the function $g_t$ is defined by \eqref{eq:g_t}.  
\end{lemma}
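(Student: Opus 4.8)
The plan is to combine the scaling identity of Lemma~\ref{lem:scaled-eigen} with the G\"artner--K\"onig type partition of Lemma~\ref{lem:eigen-gartner}. First I would introduce the (random) generalized function $\xi_t$ defined by $\langle \xi_t,\phi\rangle = h_t^{-2}W(\phi_t)$, where $\phi_t(x) = h_t\,\phi(h_t x)$. Since $g_t^2(x) = h_t\, g^2(h_t x)$, taking $\phi = g^2$ gives $\phi_t = g_t^2$, so the inner supremum on the right-hand side of \eqref{eq:scaled-eigen} is precisely $\la_{\xi_t}(Q_{th_t})$ in the notation of \eqref{eq:la}; hence Lemma~\ref{lem:scaled-eigen} reads $\la_{\dot{W}}(Q_t) = h_t^2\,\la_{\xi_t}(Q_{th_t})$. (That $\langle\xi_t,g^2\rangle$ is a.s.\ finite, which is what makes $\la_{\xi_t}$ meaningful, follows exactly as in Lemma~\ref{lem:finite_W(g^2)}.)

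Next, fix $r>0$ and restrict to $t$ large enough that $th_t > r$, so that Lemma~\ref{lem:eigen-gartner} applies with $\xi = \xi_t$ and $R = th_t$. For any $g\in\ck(Q_{th_t})$ (so that $\|g\|_2 = 1$) and any $y\in Q_r$ I would write
$$
\langle\xi_t,g^2\rangle - \tfrac12\|g'\|_2^2 = \Big(\langle\xi_t-\Phi^y,g^2\rangle - \tfrac12\|g'\|_2^2\Big) + \langle\Phi^y,g^2\rangle \le \la_{\xi_t-\Phi^y}(Q_{th_t}) + \langle\Phi^y,g^2\rangle,
$$
and bound the first term by $\max_{z\in 2r\Z\cap Q_{th_t}}\la_{\xi_t}(z+Q_{r+1})$ via \eqref{eq:eigen-1}. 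As the left-hand side is independent of $y$, I would average the inequality over $y\in Q_r$; the $2r$-periodicity \eqref{eq:eigen-1.5} of $\Phi$, a change of variables, and $\|g\|_2 = 1$ then give $\frac{1}{2r}\int_{Q_r}\langle\Phi^y,g^2\rangle\,dy = \frac{1}{2r}\int_{Q_r}\Phi(w)\,dw \le K/r$ by \eqref{eq:eigen-2}. Taking the supremum over $g$ yields $\la_{\xi_t}(Q_{th_t}) \le K/r + \max_{z\in 2r\Z\cap Q_{th_t}}\la_{\xi_t}(z+Q_{r+1})$.

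Finally I would identify each $\la_{\xi_t}(z+Q_{r+1})$ with $X_z(t)$: for $g\in\ck(z+Q_{r+1})$ the derivative $g'$ is supported in $z+Q_{r+1}$, so $\int_{Q_{th_t}}|g'|^2 \le \int_{z+Q_{r+1}}|g'|^2$, and since $\langle\xi_t,g^2\rangle = W(g_t^2)/h_t^2$ the functional defining $X_z(t)$ in \eqref{eq:X-z} dominates the one defining $\la_{\xi_t}(z+Q_{r+1})$; thus $\la_{\xi_t}(z+Q_{r+1})\le X_z(t)$. Inserting this into the previous bound and multiplying through by $h_t^2$ gives \eqref{eq:la_X_z-ineq}. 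The argument is essentially bookkeeping once Lemmas~\ref{lem:scaled-eigen} and~\ref{lem:eigen-gartner} are in place; the one step deserving genuine care is the averaging in the second paragraph, i.e.\ verifying that the localization cost $\langle\Phi^y,g^2\rangle$ averages to the $g$-independent quantity $\frac{1}{2r}\int_{Q_r}\Phi\le K/r$, together with the consistency of the various domains (the interval $Q_{th_t}$ for the gradient integral versus $z+Q_{r+1}$, and the requirement $th_t>r$).
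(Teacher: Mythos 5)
Your argument is correct and follows essentially the same route as the paper: rescale via Lemma~\ref{lem:scaled-eigen} to reduce to the eigenvalue of the field $h_t^{-2}\dot W_t$ on $Q_{th_t}$, use the periodicity of $\Phi$ to show the localization cost averages to at most $K/r$, apply \eqref{eq:eigen-1}, and identify the local eigenvalues with $X_z(t)$. The only difference is cosmetic — you fix $g$ and average over $y$ before taking the supremum, whereas the paper takes the supremum first and then bounds the sup of the average by the average of the sup — and both orderings are valid.
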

\begin{proof}
Let $\lcl W_t(\psi), \psi \in \cd(\R) \rcl$ be the generalized Gaussian field defined as 
$W_t(\psi) = W(\hpsi_t)$ where $\hpsi_t(x) = h_t\psi(h_t x)$. 
Then with the definition \eqref{eq:g_t} of $g_t$ in mind, notice that $W(g_t^2)= W_t(g^2)$. Thus invoking Lemma~\ref{lem:scaled-eigen} we have:
\begin{align*}
&\dfrac{1}{h_t^2} \la_{\dot{W}} (Q_t) = \sup_{g \in \ck(Q_{th_t})} \lcl \dfrac{1}{h_t^2} W(g_t^2) - \dfrac{1}{2}\int_{Q_{th_t}} {|g'(x)|}^2 dx \rcl\\
&= \sup_{g \in \ck(Q_{th_t})} \lcl \dfrac{1}{h_t^2}W_t(g^2)- \dfrac{1}{2}\int_{Q_{th_t}} {|g'(x)|}^2 dx\rcl\\
&= \sup_{g \in \ck(Q_{th_t})} \lcl \lla \dfrac{1}{h_t^2}\dot{W_t} - \dfrac{1}{2r} \int_{Q_r} \Phi^y dy, g^2 \rra + \lla \dfrac{1}{2r}\int_{Q_r}\Phi^y(x)dy, g^2 \rra -\dfrac{1}{2}\int_{Q_{th_t}} {|g'(x)|}^2 dx \rcl,
\end{align*}
where $\langle \dot{W}_t, g^2 \rangle$ is understood in the distribution sense. Hence inequality \eqref{eq:eigen-2} and the fact that $\langle g^2, \mathbf{1} \rangle = 1$ if $g \in \ck(Q_{th_t})$ yields
$$ \dfrac{1}{h_t^2} \la_{\dot{W}} (Q_t) \leq {\dfrac{K}{r} + \sup_{g \in \ck(Q_{th_t})} \lcl \lla \dfrac{1}{h_t^2}\dot{W_t} - \dfrac{1}{2r}\int_{Q_r}\Phi^y dy, g^2 \rra -\dfrac{1}{2}\int_{Q_{th_t}} {|g'(x)|}^2 dx \rcl}.$$
Therefore bounding $\sup \int$ by $\int \sup$ and invoking the definition \eqref{eq:la} of the principal eigenvalue, we end up with:
\begin{align*}
&\dfrac{1}{h_t^2} \la_{\dot{W}} (Q_t) \leq {\dfrac{K}{r} + \dfrac{1}{2r} \int_{Q_r} \sup_{g \in \ck(Q_{th_t})} \lcl \lla \dfrac{1}{h_t^2}\dot{W_t} - \Phi^y, g^2 \rra - \dfrac{1}{2}\int_{Q_{th_t}} {|g'(x)|}^2 dx\rcl dy}\\
&\leq \dfrac{K}{r} + \dfrac{1}{2r} \int_{Q_r} \la_{\frac{\dot{W_t}}{h_t^2} -\Phi^y} (Q_{th_t}) dy. 
\end{align*}
We can now resort to \eqref{eq:eigen-1} in order to get:
$$
\dfrac{1}{h_t^2} \la_{\dot{W}} (Q_t) \leq \dfrac{K}{r} + \max_{z \in 2r\Z \cap Q_{th_t}} \la_{\frac{\dot{W_t}}{h_t^2}} \lp z + Q_{r+1}\rp.
$$
Recall again that $\dot{W_t}$ is defined by $W_t(\psi) = W(\hpsi_t)$ where $\hpsi_t(x)=h_{t}\psi(h_{t}x)$.
Thus we have 
$$
\dfrac{1}{h_t^2} \la_{\dot{W}} (Q_t) \leq \dfrac{K}{r} + \max_{z \in 2r\Z \cap Q_{th_t}} X_z(t),
$$
where the random fields $\lcl X_z(t); z \in 2r\Z, t \geq 0 \rcl$ are defined by \eqref{eq:X-z}.
Our claim \eqref{eq:la_X_z-ineq} is thus easily deduced. 
\end{proof}
We are ready to state the desired lower bound on our principal eigenvalue.
\begin{proposition}\label{prop:ub}
Let $\la_{\dot{W}}(Q_t)$ be the principal eigenvalue of the random operator $\frac{1}{2}\Delta + \dot{W}$ over the restricted space {$\ck(Q_{t})$} of functions having compact support on $(-t,t)$, defined by~\eqref{eq:G}. Then the following limit holds:
$$
\limsup_{t \to \infty} \dfrac{\la_{\dot{W}}(Q_t)}{(\log t)^{\frac{1}{1+H}}} \leq {\lp 2 c_H \ce \rp}^{\frac{1}{1+H}},  \hspace{0.2in} \textnormal{a.s.} 
$$
where we recall that $\ce$ is defined by \eqref{eq:ce}.
\end{proposition}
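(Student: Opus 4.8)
\emph{Overview and Step 1 (scaling reduction).} The plan is to feed the box‑splitting inequality~\eqref{eq:la_X_z-ineq} into a Gaussian supremum estimate, after a scaling argument that reduces the variational problem defining $X_z(t)$ to a supremum over the \emph{compact} set $\cg$; the value $(2c_H\ce)^{1/(1+H)}$ and the scaling $h_t\asymp(\log t)^{1/(2(1+H))}$ then emerge from a Borel--Cantelli argument run first along $t=2^n$. Fix $r>0$ and the parameter $u>0$ of~\eqref{eq:h_t}, both to be optimized last. For $g\in\ck(z+Q_{r+1})$ put $\tilde g=g\,(1+\tfrac12\|g'\|_2^2)^{-1/2}$; one checks that $g\mapsto\tilde g$ is a bijection of $\ck(z+Q_{r+1})$ onto $\cg(z+Q_{r+1})$, with $s:=\|\tilde g\|_2^2\in(0,1)$, $\tfrac12\|\tilde g'\|_2^2=1-s$ and $g_t=\tilde g_t/\sqrt s$. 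Since the penalty in~\eqref{eq:X-z} equals $\|g'\|_2^2$ (as $\textnormal{supp}\,g\subset z+Q_{r+1}$), a one‑line computation gives
$$
\frac{W(g_t^2)}{h_t^2}-\frac12\|g'\|_2^2\;=\;1-\frac{1-h_t^{-2}W(\tilde g_t^2)}{s},
$$
which, because $0<s<1$, is strictly less than $h_t^{-2}W(\tilde g_t^2)$ whenever the latter is $<1$. Writing $S_z(t):=\sup_{\tilde g\in\cg(z+Q_{r+1})}h_t^{-2}W(\tilde g_t^2)$ and $E_t:=\{\max_{z\in 2r\Z\cap Q_{th_t}}S_z(t)<1\}$, we get $X_z(t)\le S_z(t)$ on $E_t$, hence by~\eqref{eq:la_X_z-ineq}, on $E_t$,
$$
\frac{\la_{\dot{W}}(Q_t)}{(\log t)^{1/(1+H)}}\;=\;u\,\frac{\la_{\dot{W}}(Q_t)}{h_t^2}\;\le\;\frac{uK}{r}+\frac{1}{(\log t)^{1/(1+H)}}\max_{z\in 2r\Z\cap Q_{th_t}}\sup_{\tilde g\in\cg(z+Q_{r+1})}W(\tilde g_t^2).
$$

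\emph{Step 2 (Gaussian estimates).} The rescaled noise $W_t(\psi):=W(\hat\psi_t)$ with $\hat\psi_t(x)=h_t\psi(h_tx)$ (as in the proof of~\eqref{eq:la_X_z-ineq}) satisfies $W_t\stackrel{d}{=}h_t^{1-H}W$ as Gaussian fields, by the change of variable $\xi\mapsto h_t\xi$ in~\eqref{eq:cov_noise_Fourier}; and $W(\tilde g_t^2)=W_t(\tilde g^2)$, so by stationarity $\sup_{\tilde g\in\cg(z+Q_{r+1})}W(\tilde g_t^2)\stackrel{d}{=}h_t^{1-H}S$ with $S:=\sup_{\tilde g\in\cg(Q_{r+1})}W(\tilde g^2)$. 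Being bounded in $W^{1,2}$, $\cg(Q_{r+1})$ is totally bounded for the canonical distance of the field $\tilde g\mapsto W(\tilde g^2)$ (this is implicit in the proof of Lemma~\ref{lem:2}), so $S<\infty$ a.s.\ and $C_r:=\be[S]<\infty$; and~\eqref{eq:varW(g^2)}, \eqref{eq:cfg^2-mu-L1} and the definition~\eqref{eq:ce} of $\ce$ give $\sup_{\tilde g\in\cg(Q_{r+1})}\textnormal{Var}[W(\tilde g^2)]\le c_H\ce$. By Borell--TIS, $\mathbf{P}(S>C_r+\eta)\le e^{-\eta^2/(2c_H\ce)}$, so a union bound over the $N_t\le th_t/r+1$ boxes yields, for $a>C_rh_t^{1-H}$,
$$
\mathbf{P}\Big(\max_{z}\sup_{\tilde g\in\cg(z+Q_{r+1})}W(\tilde g_t^2)>a\Big)\;\le\;N_t\exp\!\Big(-\frac{(a\,h_t^{H-1}-C_r)^2}{2c_H\ce}\Big).
$$
Since $h_t^{1+H}=u^{(1+H)/2}(\log t)^{1/2}$, $\log N_t\sim\log t$ and $C_rh_t^{1-H}=o((\log t)^{1/(1+H)})$: with $a=h_t^2$ the bound is $\le N_t\,t^{-u^{1+H}(1+o(1))/(2c_H\ce)}$, summable along $t=2^n$ as soon as $u^{1+H}>2c_H\ce$, so that $E_{2^n}$ holds eventually a.s.; and with $a=b(\log t)^{1/(1+H)}$ it is summable along $t=2^n$ whenever $b>\sqrt{2c_H\ce\,u^{1-H}}$, so Borel--Cantelli gives, a.s.,
$$
\limsup_{n\to\infty}\frac{1}{(\log 2^n)^{1/(1+H)}}\max_{z\in 2r\Z\cap Q_{2^nh_{2^n}}}\sup_{\tilde g\in\cg(z+Q_{r+1})}W(\tilde g_{2^n}^2)\;\le\;\sqrt{2c_H\ce\,u^{1-H}}.
$$

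\emph{Step 3 (conclusion).} Steps~1--2 give, for every $r>0$ and every $u$ with $u^{1+H}>2c_H\ce$, that $\limsup_n\la_{\dot{W}}(Q_{2^n})/(\log 2^n)^{1/(1+H)}\le uK/r+\sqrt{2c_H\ce\,u^{1-H}}$ a.s.; since $t\mapsto\la_{\dot{W}}(Q_t)$ is nondecreasing and $(\log 2^{n+1})^{1/(1+H)}/(\log 2^n)^{1/(1+H)}\to1$, the same holds with $\limsup_{t\to\infty}$. Sending $r\to\infty$ removes $uK/r$, and then letting $u$ decrease to the least admissible value $(2c_H\ce)^{1/(1+H)}$ yields
$$
\limsup_{t\to\infty}\frac{\la_{\dot{W}}(Q_t)}{(\log t)^{1/(1+H)}}\;\le\;\sqrt{2c_H\ce\cdot(2c_H\ce)^{(1-H)/(1+H)}}\;=\;(2c_H\ce)^{1/(1+H)}\qquad\textnormal{a.s.},
$$
which is the claim.

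\emph{Main obstacle.} The delicate step is Step~2: one must trade the non‑compact set $\ck$ for the compact $\cg$ \emph{without degrading the variance constant} $c_H\ce$, and then control the Gaussian supremum both over the infinitely many test functions in one box --- which is where the $W^{1,2}$‑compactness and the a priori bound~\eqref{eq:cfg^2-mu-L1} enter, just as in Lemma~\ref{lem:2} --- and over the $\asymp t$ boxes. The competition between the noise amplitude $h_t^{1-H}$ and the $H^1$‑penalty has to be balanced so that the Borel--Cantelli estimate is sharp precisely at the scaling $h_t\asymp(\log t)^{1/(2(1+H))}$; discarding the exceptional event $E_t^c$ infinitely often is part of the same balance, and it is this that pins down the optimal $u$ and hence the exact constant $(2c_H\ce)^{1/(1+H)}$ rather than a cruder upper bound.
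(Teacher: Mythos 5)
Your proof is correct and follows essentially the same route as the paper: the box-splitting bound \eqref{eq:la_X_z-ineq}, the rescaling $g\mapsto g\,(1+\tfrac12\|g'\|_2^2)^{-1/2}$ to trade the non-compact set $\ck$ for the compact set $\cg$, a Borell--TIS estimate with variance constant $c_H h_t^{2(1-H)}\ce$, and Borel--Cantelli along $t=2^n$ followed by monotonicity and the limits $r\to\infty$, $u\downarrow(2c_H\ce)^{1/(1+H)}$. The one genuine variation is your treatment of the centering term: instead of the paper's Lemma~\ref{lem:2}, which shows $\be[\sup_{f\in\cg(Q_{(r+1)/h_t})}W(f^2)]\to 0$ by shrinking the box, you use the exact distributional scaling $W_t\stackrel{d}{=}h_t^{1-H}W$ to identify the expected supremum as $C_r h_t^{1-H}=o(h_t^2)$ with $C_r=\be[\sup_{g\in\cg(Q_{r+1})}W(g^2)]$ a fixed constant; this is arguably cleaner, but the finiteness of $C_r$ still rests on the same entropy argument that underlies Lemma~\ref{lem:2}, so you have repackaged that ingredient rather than eliminated it. Finally, your second Borel--Cantelli at level $a=b(\log t)^{1/(1+H)}$ is redundant: on the event $E_t$ you already have $\max_z S_z(t)<1$, hence $\la_{\dot{W}}(Q_t)/h_t^2\le K/r+1$, which after $r\to\infty$ and $u\downarrow(2c_H\ce)^{1/(1+H)}$ yields the claim directly, exactly as in the paper.
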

\begin{proof}
We shall rely on relation \eqref{eq:la_X_z-ineq} and bound $\max_{z \in 2r\Z \cap Q_{th_t}} X_z(t)$ thanks to Gaussian entropy methods. We divide the proof in several steps.

\noindent
\emph{Step 1: Reduction to a Gaussian supremum.}
By homogeneity of the Gaussian field $\lbrace W(\phi); \phi \in \cd(\R) \rbrace$, the random variables ${\lcl X_z(t) \rcl}_{ z \in 2r\Z \cap Q_{th_t}}$ are identically distributed. Consequently we have

\begin{align}\label{eq:X_z-X_0}
\bp \lc \max_{z \in 2r\Z \cap Q_{th_t}} X_z(t) > 1 \rc &\leq \#\lcl 2r\Z \cap Q_{th_t} \rcl\bp\lc X_0(t) > 1 \rc \nonumber \\
&\leq \lp \dfrac{th_t}{r} \rp \bp \lc X_0(t) > 1 \rc.
\end{align}
Recalling the definition \eqref{eq:X-z} of $X_0(t)$, we thus get
\begin{multline}\label{eq:Xz_ineq_1}
\bp \lc \max_{z \in 2r\Z \cap Q_{th_t}} X_z(t) > 1 \rc \\
 \leq {\lp \dfrac{th_t}{r} \rp} \bp \lc \sup_{g \in \ck(Q_{r+1})} \lcl \dfrac{1}{h_t^2}W(g_t^2) - \dfrac{1}{2} \int_{Q_{th_t}} {\lln g'(x) \rrn}^2 dx\rcl > 1 \rc.
\end{multline}
Notice that in \eqref{eq:Xz_ineq_1} the Gaussian supremum for the family $\lp W(g_t^2) \rp$ is taken over the set $\ck$ given by \eqref{eq:K}. However, this set is not compact, which is not suitable for Gaussian computations (see e.g. the discussion after \cite[Lemma~1.3.1]{adler-taylor}). In the following steps we will reduce our computations to an optimization over a compact set of the form $\cg$ (see equation~\eqref{eq:G}). To this aim,
for any $g \in \ck(Q_{r+1})$, set
$$
\phi = \frac{g}{{\sqrt{1+\frac{1}{2}{\|g'\|}_2^2}}}.
$$
Notice that since ${\|g\|}_2 = 1$, we have 
$$
\phi \in \cg(Q_{r+1}), \quad \text{and} \quad \phi_t = \frac{g_t}{\sqrt{1+\frac{1}{2}{\|g'\|}_2^2}},
$$ 
where the notation $\phi_{t}$ is given by \eqref{eq:g_t}.
Therefore the following rough estimate holds true for the parameter $h_t$ defined by \eqref{eq:h_t}: 
$$
\dfrac{1}{h_t^2} W(\phi_t^2) \leq \dfrac{1}{h_t^2} \sup_{f \in \cg(Q_{r+1})} W(f_t^2).
$$
Moreover, recalling that $\phi_t^2 = {\lp 1 + \frac{1}{2}{\|g'\|}_2^2 \rp}^{-1} g_t^2$, we find
$$
\dfrac{1}{h_t^2} W(g_t^2) \leq \dfrac{\lp 1+ \dfrac{1}{2} {\|g'\|}_2^2 \rp}{h_t^2} \sup_{f \in \cg(Q_{r+1})} W(f_t^2).
$$
Thus, subtracting ${\|g'\|}_2^2$ on both sides of the above equation we get the following relation for all $g \in \ck(Q_{r+1})$:
$$
\lp \dfrac{1}{h_t^2} W(g_t^2) - \dfrac{1}{2} \int_{Q_{r+1}}{\lln g'(x)\rrn}^2 dx\rp \leq \dfrac{\lp 1+\dfrac{1}{2}{\|g'\|}_2^2 \rp}{h_t^2} \sup_{f \in \cg(Q_{r+1})} W(f_t^2) - \dfrac{1}{2}{\|g'\|}_2^2
$$
Taking supremum over $g \in \ck\lp{Q_{r+1}} \rp$, this yields
$$
X_0(t) \leq \sup_{g \in \ck(Q_{r+1})} 
\lcl \dfrac{\lp 1+\dfrac{1}{2}{\|g'\|}_2^2 \rp}{h_t^2} \sup_{f \in \cg(Q_{r+1})} W(f_t^2)  - \dfrac{1}{2}{\|g'\|}_2^2 \rcl.
$$
Consequently, if $X_0(t) \geq 1$, we also have 
$$
\sup_{g \in \ck(Q_{r+1})} \lp \dfrac{\sup_{f \in \cg(Q_{r+1})} W(f_t^2)}{h_t^2}  - 1\rp \lp 1+\dfrac{1}{2} {\|g'\|}_2^2 \rp \geq 0,
$$
or otherwise stated:
$$
\lc \sup_{f \in \cg(Q_{r+1})} \dfrac{W(f_t^2)}{h_t^2} - 1 \rc \sup_{g \in \ck(Q_{r+1})} \lp 1+ \dfrac{1}{2}{\|g'\|}_2^2 \rp \geq 0.
$$
It is readily checked that the above condition is met iff $\sup_{f \in \cg(Q_{r+1})} W(f_t^2) \geq h_t^2$. Summarizing, we have shown that 
$$
\lcl X_0(t) \geq 1 \rcl \subset \lcl \sup_{f \in \cg(Q_{r+1})} W(f_t^2) \geq h_t^2 \rcl, 
$$
which implies 
\beq\label{eq:prob_gauss_sup}
\bp \lp X_0(t) \geq 1 \rp \leq \bp \lc \sup_{g \in \cg(Q_{r+1})} W(g_t^2) \geq h_t^2 \rc.
\eeq
We are now reduced to the desired sup over a compact set.

\noindent
\emph{Step 2: Gaussian concentration.}
We now evaluate the right hand side of \eqref{eq:prob_gauss_sup} by standard Gaussian supremum estimates. Namely, some elementary scaling arguments show that for each $g \in \cg\lp{Q_{r+1}}\rp$, 
$$
{\lp{1 + \dfrac{\lp h_t^2 - 1 \rp}{2} {\|g'\|}_2^2}\rp}^{-1/2} {g_t} \in \cg(Q_{(r+1)/h_t}).
$$
Moreover by the linearity of Gaussian fields and due to the fact that ${\|g'\|}_2^2 \leq 2$ whenever $g \in \cg(Q_{r+1})$, we get 
\beq \label{eq:exp_sup_W}
\be\lc \sup_{g \in \cg(Q_{r+1})} W(g_t^2)\rc 
\leq 
h_t^2 \, \be \lc \sup_{f \in \cg(Q_{(r+1)/h_t})} W(f^2)\rc 
\equiv h_t^2 \delta_t .
\eeq
In addition, Lemma~\ref{lem:2} asserts that $\lim_{t \to \infty} \delta_t = 0$ (notice that the fact of working on a box with finite size $r+1$ is crucial for this step). We are now in a position to invoke Borell-TIS concentration inequality for Gaussian fields (See \cite[Theorem 2.1.2]{adler-taylor}) and our inequality~\eqref{eq:exp_sup_W}, which yields
\begin{align}\label{eq:gauss_ineq}
\bp& \lc \sup_{g \in \cg(Q_{r+1})} W(g_t^2) \geq h_t^2 \rc \nonumber  \\
&=\bp \lc \sup_{g \in \cg(Q_{r+1})} W(g_t^2) - \be \lp \sup_{g \in \cg(Q_{r+1})} W(g_t^2) \rp \geq h_t^2 (1 -\delta_t) \rc \nonumber \\
& \leq \exp \lc - \dfrac{h_t^4 {\lp 1-\delta_t \rp}^2}{2\si_t^2} \rc,
\end{align}
where $\si_t^2$ is a parameter defined by $\si_t^2 = \sup_{g \in \cg(Q_{r+1})} \textnormal{Var}\lc W(g_t^2) \rc$.

We now find an upper bound for the term $\si_t^2$ in \eqref{eq:gauss_ineq}. This is achieved as follows:
Owing to the definition \eqref{eq:cov_noise_Fourier} of the covariance of $W$, we have
\begin{align*}
\si_t^2 &= c_H \sup_{g \in \cg(Q_{r+1})} \int_{\R} {\lln \cf g_t^2(\xi) \rrn}^2 {|\la|}^{1-2H} d\la\\
&= c_H \sup_{g \in \cg(Q_{r+1})} \int_{\R} {\lln \int_{\R} e^{\imath \la x} g_t^2(x) dx \rrn}^2 {|\la|}^{1-2H} d\la .
\end{align*}
Therefore, recalling the definition \eqref{eq:g_t} of $g_{t}$ and invoking some easy scaling arguments we obtain:
\begin{align}\label{eq:si_t^2_bnd}
\si_t^2 &= c_H h_t^{2-2H}  \sup_{g \in \cg(Q_{r+1})} \int_{\R} {\lln \int_{\R} e^{\imath\la x} g^2(x) dx \rrn}^2 {|\la|}^{1-2H} d\la \nonumber \\
&\leq c_H h_t^{2-2H}  \sup_{g \in \cg(\R)} \int_{\R} {\lln \int_{\R} e^{\imath\la x} g^2(x) dx \rrn}^2 {|\la|}^{1-2H} d\la = c_H h_t^{2-2H} \ce,
\end{align}
where we recall that $\ce$ is a finite quantity according to \eqref{eq:ce}.
We can plug our upper bound~\eqref{eq:si_t^2_bnd} for the item $\si_t^2$ in \eqref{eq:gauss_ineq} and replace $h_t$ by its value $\sqrt{u} {\lp \log t \rp}^{1/(2(1+H))}$. We end up with:
\begin{equation*}
\bp \lc \sup_{g \in \cg(Q_{r+1})} W(g_t^2) \geq h_t^2 \rc \leq \exp\lp -\dfrac{(1-\delta_t)^2 u^{1+H}}{2 \ce c_H} \log t \rp .
\end{equation*}

We wish the series $\sum_k \bp \lp \sup_{g \in \cg(Q_{r+1})} W(g_{2^k}^2) \geq h_{2^k}^2 \rp $ to be convergent. To this aim, owing to the fact that $\lim_{t \to \infty} \delta_t = 0$, for $t$ sufficiently large we get
\beq\label{eq:gauss_final_bnd}
\bp \lp \sup_{g \in \cg(Q_{r+1})} W(g_t^2) \geq h_t^2 \rp \leq \exp\lc -\lp 1+\nu \rp \log t \rc
=\frac{1}{t^{1+\nu}},
\eeq
where $\nu > 0$ is a small enough constant, provided the following condition is met:
\beq\label{eq:ub_u_cond}
u > {\lp 2 c_H  \ce \rp}^{1/(1+H)}.
\eeq
Here we highlight the fact that $t^{-(1+\nu)}$ is obtained in the right hand side of \eqref{eq:gauss_final_bnd}. This exponent lead to our choice of scaling by $h_t = \sqrt{u} {(\log t)}^{\frac{1}{2(1+H)}}$ in our computations (see Remark~\ref{rmk:scaling}). 

\noindent
\emph{Step 3: Conclusion.} Now, we summarize our steps so far. Thanks to \eqref{eq:X_z-X_0}, \eqref{eq:prob_gauss_sup} and \eqref{eq:gauss_final_bnd} we have
\begin{align*}
\bp \lc \max_{z \in 2r\Z \cap Q_{th_t}} X_z(t) \geq 1 \rc &\leq {\lp\dfrac{th_t}{r}\rp} \bp \lc X_0(t) \geq 1 \rc\\
&\leq {\lp\dfrac{th_t}{r}\rp} \bp \lc \sup_{g \in \cg(Q_{r+1})} W(g_t^2) \geq h_t^2 \rc\\
&\leq {\lp\dfrac{th_t}{r}\rp} \exp\lp -(1+\nu)\log t \rp = \dfrac{h_t}{r} \dfrac{1}{t^{\nu}}.
\end{align*}
Take the sequence $t_k = 2^k$. Then we have 
$$\bp \lc \max_{z \in 2r\Z \cap Q_{t_k h_{t_k}}} X_z(t_k) \geq 1 \rc \leq \frac{\sqrt{u}}{r} \frac{(\log t_k)^{\frac{1}{2(1+H)}}}{t_k^{\nu}}= \frac{\sqrt{u}}{r}(k \log 2)^{\frac{1}{2(1+H)}} 2^{-k\nu},$$ 
and the right hand side of the above inequality is the general term of a convergent series. By Borell-Cantelli Lemma, we thus have
\beq \label{eq:X_z(t)_upper-bnd}
\limsup_{k \to \infty} \max_{z \in 2r\Z \cap Q_{t_k h_{t_k}}} X_z(t_k) < 1,\hspace{0.2in} \textnormal{a.s.}
\eeq
We now draw conclusions on the principal eigenvalue itself. Indeed, from \eqref{eq:la_X_z-ineq} and \eqref{eq:X_z(t)_upper-bnd}, it is readily checked that
$$
\limsup_{k \to \infty} \dfrac{\la_{\dot{W}}(Q_{t_k})} {{\lp \log t_k \rp}^{\frac{1}{(1+H)}}} < \lp \dfrac{K}{r} + 1\rp u, \hspace{0.2in} \textnormal{a.s.}
$$
Thus some elementary monotonicity arguments show that
\beq\label{eq:upper_bnd_r}
\limsup_{t \to \infty} \dfrac{\la_{\dot{W}}(Q_{t})}{ {\lp \log t \rp}^{\frac{1}{(1+H)}}} < \lp \dfrac{K}{r} + 1\rp u \hspace{0.2in} \textnormal{a.s.}
\eeq
Since the constant $K$ in \eqref{eq:upper_bnd_r} is independent of $r$, and $r$ can be arbitrarily large, we also get
$$
\limsup_{t \to \infty} \dfrac{\la_{\dot{W}}(Q_{t})}{ {\lp \log t \rp}^{\frac{1}{(1+H)}}} \leq  u \hspace{0.2in} \textnormal{a.s.}
$$

Eventually recall that we had to impose the condition \eqref{eq:ub_u_cond} on $u$. However $u$ can be taken as close as we wish to the value ${\lp 2 c_H \ce \rp}^{\frac{1}{1+H}}$. As a consequence we get
$$
\limsup_{t \to \infty} \dfrac{\la_{\dot{W}}(Q_{t})}{ {\lp \log t \rp}^{\frac{1}{(1+H)}}} \leq {\lp 2 c_H \ce \rp}^{{1}/{1+H}} \hspace{0.2in} \textnormal{a.s.} 
$$
\end{proof}
\subsection{Lower Bound}
This section is devoted to a lower bound counterpart of Proposition~\ref{prop:ub}. We start by a lemma asserting that $\la_{\dot{W}}(Q_t)$ cannot get too small with respect to an order of magnitude of $h_t^2$.
\begin{lemma}\label{lem:la-W-ineq}
Let $\la_{\dot{W}}(Q_t)$ be the principal eigenvalue of the random operator $\frac{1}{2}\Delta + \dot{W}$ over the restricted space $\ck(Q_{t})$ of functions having compact support on $(-t,t)$. Then we have the following upper bound:
\beq\label{eq:la-W-ineq}
\bp \lc \la_{\dot{W}} (Q_t) \leq h_t^2 \rc \leq \bp \lc \sup_{g \in \cg(Q_{th_t})} \dfrac{W(g_t^2)}{h_t^2} \leq 1 \rc.
\eeq
\end{lemma}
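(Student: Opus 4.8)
The plan is to prove the set inclusion
$$
\lcl \la_{\dot{W}}(Q_t) \leq h_t^2 \rcl \subseteq \lcl \sup_{g \in \cg(Q_{th_t})} \frac{W(g_t^2)}{h_t^2} \leq 1 \rcl
$$
and then simply take probabilities. The starting point is the rescaled representation of the principal eigenvalue obtained in Lemma~\ref{lem:scaled-eigen}, which I would invoke to rewrite
$$
\la_{\dot{W}}(Q_t) = h_t^2 \sup_{g \in \ck(Q_{th_t})} \lcl \frac{1}{h_t^2} W(g_t^2) - \frac{1}{2} \int_{Q_{th_t}} {|g'(x)|}^2 dx \rcl .
$$
Since any $g \in \ck(Q_{th_t})$ is supported in $Q_{th_t}$, one has $\int_{Q_{th_t}} {|g'|}^2 = {\|g'\|}_2^2$, so the event $\{\la_{\dot{W}}(Q_t) \leq h_t^2\}$ coincides with the event on which $h_t^{-2} W(\phi_t^2) - \frac12 {\|\phi'\|}_2^2 \leq 1$ holds simultaneously for every $\phi \in \ck(Q_{th_t})$.

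Next I would transfer this bound to test functions in the compact set $\cg(Q_{th_t})$ by the same normalization trick used in the proof of Proposition~\ref{prop:ub}. Given $g \in \cg(Q_{th_t})$, the relation ${\|g\|}_2^2 + \frac12 {\|g'\|}_2^2 = 1$ together with the smoothness of $g$ forces ${\|g\|}_2 > 0$ (otherwise $g \equiv 0$ and hence $g' \equiv 0$, contradicting the normalization). Setting $\phi = g / {\|g\|}_2$, the function $\phi$ lies in $\ck(Q_{th_t})$: it is smooth, vanishes at the boundary, has unit $L^2$ norm, and ${\|\phi'\|}_2^2 = {\|g'\|}_2^2/{\|g\|}_2^2 < \infty$. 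Moreover the rescaling $g \mapsto g_t$ of Notation~\ref{not:g_t+h_t} is $L^2$-linear, so $\phi_t = g_t/{\|g\|}_2$ and therefore $\phi_t^2 = g_t^2/{\|g\|}_2^2$, while $W(\phi_t^2) = W(g_t^2)/{\|g\|}_2^2$ by linearity of $W$.

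Then, on the event $\{\la_{\dot{W}}(Q_t) \leq h_t^2\}$, I would apply the bound $h_t^{-2} W(\phi_t^2) - \frac12 {\|\phi'\|}_2^2 \leq 1$ to this particular $\phi$, substitute the identities above, multiply through by ${\|g\|}_2^2 > 0$, and use ${\|g\|}_2^2 + \frac12 {\|g'\|}_2^2 = 1$ to obtain
$$
\frac{1}{h_t^2} W(g_t^2) \leq {\|g\|}_2^2 + \frac12 {\|g'\|}_2^2 = 1 .
$$
As $g \in \cg(Q_{th_t})$ was arbitrary, this gives $\sup_{g \in \cg(Q_{th_t})} h_t^{-2} W(g_t^2) \leq 1$ on that event, which is precisely the desired inclusion, and \eqref{eq:la-W-ineq} follows at once. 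There is no real obstacle in this argument: its entire content is the passage between the normalizations defining $\ck$ and $\cg$ combined with the homogeneity of $g \mapsto g_t$, both already exploited earlier; the only points deserving a line of care are verifying that ${\|g\|}_2 \neq 0$ so the division is legitimate, and that the normalized function indeed remains in $\ck(Q_{th_t})$.
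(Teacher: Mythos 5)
Your proposal is correct and follows essentially the same route as the paper: rewrite $\la_{\dot W}(Q_t)$ via Lemma~\ref{lem:scaled-eigen}, then for each $g\in\cg(Q_{th_t})$ test the resulting supremum over $\ck(Q_{th_t})$ against the normalized function $\phi=g/\|g\|_2$ and use $\|g\|_2^2+\frac12\|g'\|_2^2=1$ to deduce the event inclusion. The only difference is cosmetic — you additionally verify $\|g\|_2\neq 0$, a point the paper leaves implicit.
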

\begin{proof}
Observe that from \eqref{eq:scaled-eigen}, 
\beq\label{eq:eigen-lb}
\bp \lp \la_{\dot{W}}(Q_t) \leq h_t^2 \rp = \bp \lc \sup_{g \in \ck(Q_{th_t})} \lcl \dfrac{W(g_t^2)}{h_t^2} - \dfrac{1}{2} \int_{Q_{th_t}} {\lln g'(x) \rrn}^2 dx \rcl \leq 1 \rc.
\eeq
Moreover, 
\beq\label{eq:prob-1}
\bp \lc \sup_{g \in \ck(Q_{th_t})} \lcl \dfrac{W(g_t^2)}{h_t^2} - \dfrac{1}{2} \int_{Q_{th_t}} {\lln g'(x) \rrn}^2 dx \rcl \leq 1 \rc \leq \bp \lc \sup_{g \in \cg(Q_{th_t})} \dfrac{W(g_t^2)}{h_t^2} \leq 1 \rc.
\eeq
This is proved similarly to our considerations in Step~1 of the proof of Proposition~\ref{prop:ub}, details are included here for sake of clarity.

Namely, in order to prove \eqref{eq:prob-1}, notice that for $g \in  \cg\lp Q_{th_t}\rp$, we have $\phi = \frac{g}{{\|g\|}_2} \in \ck(Q_{th_t})$. Consequently
$$
\dfrac{W(\phi_t^2)}{h_t^2} - \dfrac{1}{2} \int_{Q_{th_t}} {\lln \phi'(x) \rrn}^2 dx 
\leq 
\sup_{f \in \ck(Q_{th_t})} \lcl \dfrac{W(f_t^2)}{h_t^2} - \dfrac{1}{2} \int_{Q_{th_t}} {\lln f'(x) \rrn}^2 dx \rcl .
$$
Thus the bound
\beq \label{eq:sup_W(g_t^2)_extra}
\sup_{f \in \ck(Q_{th_t})} \lcl \dfrac{W(f_t^2)}{h_t^2} - \dfrac{1}{2} \int_{Q_{th_t}} {\lln f'(x) \rrn}^2 dx \rcl \leq 1
\eeq
implies, still with $\phi=g/\|g\|_{2}$,
$$
\dfrac{W(\phi_t^2)}{h_t^2} - \dfrac{1}{2} \int_{Q_{th_t}} {\lln \phi'(x) \rrn}^2 dx  \leq 1.
$$
This in turn gives the following inequality when we write down $\phi$ in terms of $g$:
$$
\dfrac{W(g_t^2)}{h_t^2} - \dfrac{1}{2} \int_{Q_{th_t}} {\lln g'(x) \rrn}^2 dx \leq {\|g\|}_2^2.
$$
Therefore we have obtained, for every $g \in \cg(Q_{th_t})$,
$$
\dfrac{W(g_t^2)}{h_t^2} \leq {\|g\|}_2^2 + \dfrac{1}{2} {\|g'\|}_2^2 = 1,
$$
where the last equality follows from the fact that $g \in \cg(Q_{th_t})$. Taking supremum and recalling that we have assumed \eqref{eq:sup_W(g_t^2)_extra}, we get
$$
\lcl \sup_{g \in \ck(Q_{th_t})} \lcl \dfrac{W(g_t^2)}{h_t^2} - \dfrac{1}{2} \int_{Q_{th_t}} {\lln g'(x) \rrn}^2 dx \rcl \leq 1 \rcl \subset \lcl \sup_{g \in \cg(Q_{th_t})} \dfrac{W(g_t^2)}{h_t^2} \leq 1\rcl.
$$
Thus, \eqref{eq:prob-1} is proved and \eqref{eq:eigen-lb} can be further reduced to 
$$
\bp \lc \la_{\dot{W}} (Q_t) \leq h_t^2 \rc \leq \bp \lc \sup_{g \in \cg(Q_{th_t})} \dfrac{W(g_t^2)}{h_t^2} \leq 1 \rc,
$$
which proves our result~\eqref{eq:la-W-ineq}.
\end{proof}

Our next lemma is a general bound for Gaussian vectors with nontrivial covariance structure. It is borrowed from \cite[Lemma~4.2]{chen-ann14} and will be used in a discretization procedure which is part of our strategy for the lower bound on $\la_{\dot{W}}(Q_t)$. %In the following lemma we state an inequality to bound the maximum entry of a Gaussian vector. 
\begin{lemma}\label{lem:gauss_vec_ineq}
Let $(\xi_1, \dots, \xi_n)$ be a mean-zero Gaussian vector with identically distributed components. Write $R = \max_{i \neq j}\lln \textnormal{Cov} (\xi_i, \xi_j) \rrn$ and assume that $\textnormal{Var}(\xi_1) \geq 2R$. Then for any $A, B > 0$, the following inequality holds true:
$$
\bp \lc \max_{k \leq n} \xi_k  \leq A \rc \leq  {\lp \bp \lc \xi_1 \leq \sqrt{\dfrac{2R+\textnormal{Var}(\xi_1)}{\textnormal{Var}(\xi_1)}} ( A + B ) \rc \rp}^n + \bp\lc U \geq \dfrac{B}{\sqrt{2R}} \rc
$$
where $U$ is a standard normal random variable.
\end{lemma}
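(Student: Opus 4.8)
The plan is to replace the correlated vector $(\xi_1,\dots,\xi_n)$ by a \emph{more} strongly correlated Gaussian vector having the explicit ``common factor plus i.i.d.\ noise'' structure, to invoke Slepian's normal comparison inequality, and then to integrate out the common factor by a conditioning-and-splitting argument. Write $\sigma^2=\textnormal{Var}(\xi_1)$; by hypothesis $\sigma^2\ge 2R$, so $\sigma^2-R\ge R\ge 0$, and we may assume $R>0$, since for $R=0$ the $\xi_k$ are jointly Gaussian and pairwise uncorrelated, hence independent, and the bound is immediate (with the convention $B/0=\infty$).

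First I would introduce mutually independent Gaussian variables $\eta_1,\dots,\eta_n\sim\mathcal N(0,\sigma^2-R)$ and $V\sim\mathcal N(0,1)$ and set $\tilde\xi_k=\eta_k+\sqrt R\,V$. A one-line covariance computation gives $\textnormal{Var}(\tilde\xi_k)=\sigma^2=\textnormal{Var}(\xi_k)$ and, for $i\ne j$, $\textnormal{Cov}(\tilde\xi_i,\tilde\xi_j)=R\ge\textnormal{Cov}(\xi_i,\xi_j)$; this is exactly where $|\textnormal{Cov}(\xi_i,\xi_j)|\le R$ is used, so that the constant $R$ sits above every off-diagonal entry of the covariance matrix of $\xi$. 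Slepian's inequality (the normal comparison lemma, see e.g.\ \cite{adler-taylor}) then yields
$$
\bP\Big[\max_{k\le n}\xi_k\le A\Big]\ \le\ \bP\Big[\max_{k\le n}\tilde\xi_k\le A\Big].
$$

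Next, conditioning on $V$ and using the independence of the $\eta_k$'s (with $\Phi$ the standard normal c.d.f.\ and $\sigma^2-R\ge R>0$),
$$
\bP\Big[\max_{k\le n}\tilde\xi_k\le A\Big]=\E\Big[\Phi\Big(\tfrac{A-\sqrt R\,V}{\sqrt{\sigma^2-R}}\Big)^{n}\Big].
$$
I would then split this expectation according to whether $V\ge -B/\sqrt R$ or not. On $\{V\ge -B/\sqrt R\}$ one has $A-\sqrt R\,V\le A+B$, so the integrand is at most $\Phi\big(\tfrac{A+B}{\sqrt{\sigma^2-R}}\big)^{n}$; on the complement the integrand is bounded by $1$, which costs $\bP[V<-B/\sqrt R]=\bP[U>B/\sqrt R]$ for a standard normal $U$. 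Altogether
$$
\bP\Big[\max_{k\le n}\xi_k\le A\Big]\ \le\ \Phi\Big(\tfrac{A+B}{\sqrt{\sigma^2-R}}\Big)^{n}+\bP\Big[U>\tfrac{B}{\sqrt R}\Big].
$$

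Finally I would match this (slightly stronger) bound with the asserted form by two elementary monotonicity facts, both using $\sigma^2\ge 2R$ and $A+B>0$: since $(\sigma^2-R)(\sigma^2+2R)=\sigma^4+R(\sigma^2-2R)\ge\sigma^4$, one gets $\sqrt{\sigma^2-R}\ge\sigma^2/\sqrt{\sigma^2+2R}$, hence $\Phi\big(\tfrac{A+B}{\sqrt{\sigma^2-R}}\big)\le\Phi\big((A+B)\sqrt{\sigma^2+2R}/\sigma^2\big)=\bP\big[\xi_1\le\sqrt{(2R+\sigma^2)/\sigma^2}\,(A+B)\big]$; and $B/\sqrt R\ge B/\sqrt{2R}$ gives $\bP[U>B/\sqrt R]\le\bP[U\ge B/\sqrt{2R}]$. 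Substituting these two bounds produces the claimed inequality. The only genuinely delicate point is the bookkeeping around Slepian's inequality: keeping its direction straight and checking that $\tilde\xi$ dominates the covariances of $\xi$ entrywise while sharing its variances. The hypothesis $\sigma^2\ge 2R$ is invoked twice — to make $\mathcal N(0,\sigma^2-R)$ a bona fide (non-degenerate) Gaussian and to obtain the final variance comparison — and everything else is routine.
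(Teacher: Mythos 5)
Your proof is correct. Note first that the paper itself offers no argument for this lemma: it is imported verbatim from \cite[Lemma~4.2]{chen-ann14}, so there is no in-text proof to compare against. Your route is the standard (and, I believe, essentially Chen's) one: replace $\xi$ by an exchangeable ``common factor plus i.i.d.\ noise'' vector with matching variances and entrywise larger covariances, apply Slepian in the correct direction, condition on the common factor, and split according to whether that factor is too negative. All the delicate points check out: the direction of Slepian is right; the non-degeneracy of $\mathcal N(0,\sigma^2-R)$ and the final variance comparison are the two places where $\sigma^2\ge 2R$ enters, exactly as you say; the identity $(\sigma^2-R)(\sigma^2+2R)=\sigma^4+R(\sigma^2-2R)\ge\sigma^4$ is correct; and the $R=0$ degenerate case is handled. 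The one genuine difference from the decomposition that produces the lemma's constants \emph{directly} is your choice $\tilde\xi_k=\eta_k+\sqrt R\,V$ with $\eta_k\sim\mathcal N(0,\sigma^2-R)$; taking instead $\tilde\xi_k=\tfrac{\sigma}{\sqrt{\sigma^2+2R}}(\eta_k+\sqrt{2R}\,U)$ with $\eta_k$ i.i.d.\ copies of $\xi_1$ gives matching variances, common covariance $\tfrac{2R\sigma^2}{\sigma^2+2R}\ge R$ (again precisely because $\sigma^2\ge 2R$), and after the same splitting yields the stated bound with no further algebra. Your version instead produces the strictly stronger intermediate bound $\Phi\bigl(\tfrac{A+B}{\sqrt{\sigma^2-R}}\bigr)^n+\bp[U>B/\sqrt R]$ and then weakens it to the stated form; both the strengthening and the two monotonicity steps used to recover the lemma are verified correctly. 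No gaps.
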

We can now state our lower bound on the principal eigenvalue $\la_{\dot{W}}(Q_t)$. 
\begin{proposition}\label{prop:lb}
Under the same conditions as for Lemma~\ref{lem:la-W-ineq}, the following lower bound is fulfilled:
$$
\liminf_{t \to \infty}  \dfrac{\la_{\dot{W}}(Q_{t})}{ {\lp \log t \rp}^{\frac{1}{(1+H)}}} \geq {\lp 2 c_H \ce \rp}^{\frac{1}{1+H}} \hspace{0.2in} \textnormal{a.s.}
$$
\end{proposition}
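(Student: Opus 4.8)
The plan is to prove that for every $u<(2c_H\ce)^{1/(1+H)}$ one has $\la_{\dot{W}}(Q_t)>h_t^2=u(\log t)^{1/(1+H)}$ for all large $t$ almost surely; since $\ce>0$, letting $u$ increase to $(2c_H\ce)^{1/(1+H)}$ along a countable sequence and intersecting the corresponding a.s.\ events then gives the proposition. By Lemma~\ref{lem:la-W-ineq}, $\bp[\la_{\dot{W}}(Q_t)\leq h_t^2]\leq\bp[\sup_{g\in\cg(Q_{th_t})}W(g_t^2)\leq h_t^2]$, so along $t_k=2^k$ it suffices to show
\beq\label{eq:lb-summability}
\sum_{k}\bp\lc\sup_{g\in\cg(Q_{t_kh_{t_k}})}W(g_{t_k}^2)\leq h_{t_k}^2\rc<\infty.
\eeq
Once \eqref{eq:lb-summability} is established, Borel--Cantelli gives $\la_{\dot{W}}(Q_{t_k})>h_{t_k}^2$ eventually, and the monotonicity of $t\mapsto\la_{\dot{W}}(Q_t)$ together with $(\log t_k)^{1/(1+H)}/(\log t_{k+1})^{1/(1+H)}\to1$ upgrades this to $\liminf_{t\to\infty}\la_{\dot{W}}(Q_t)/(\log t)^{1/(1+H)}\geq u$ a.s.

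To bound the Gaussian supremum from below I would fix $\eta>0$ with $u^{1+H}<2c_H(\ce-\eta)$ and choose a \emph{compactly supported} test function $g_*\in\cg(Q_\rho)$, for some fixed $\rho>0$, with $\int_\R|\cf g_*^2(\la)|^2|\la|^{1-2H}d\la\geq\ce-\eta$; such a $g_*$ exists by truncating and renormalising a near-optimal Schwartz function, using the bound $\int_\R|\cf f^2(\la)|^2|\la|^{1-2H}d\la\lesssim 1+\|f'\|_2^2$ from the proof of Lemma~\ref{lem:finite_W(g^2)} to pass to the limit. Next I would fix a slowly growing spacing, say $S_t=\log t$, and place points $z_1<\dots<z_{n_t}$ inside $Q_{th_t}$, consecutive ones at distance $S_t$ and each satisfying $z_i+Q_\rho\subset Q_{th_t}$, so that $n_t\geq th_t/(2\log t)$ for $t$ large and the translates $g_*(\cdot-z_i)$ are disjointly supported elements of $\cg(Q_{th_t})$. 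Writing $\xi_i:=W\bigl((g_*(\cdot-z_i))_t^2\bigr)$ we then have the pointwise bound $\sup_{g\in\cg(Q_{th_t})}W(g_t^2)\geq\max_{i\leq n_t}\xi_i$, so \eqref{eq:lb-summability} follows once $\bp[\max_{i\leq n_t}\xi_i\leq h_t^2]$ is shown summable along $t_k=2^k$.

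The vector $(\xi_1,\dots,\xi_{n_t})$ is centred Gaussian with identically distributed entries (by stationarity of $W$). A scaling computation as in \eqref{eq:si_t^2_bnd} gives $V_t:=\textnormal{Var}(\xi_1)=c_Hh_t^{2-2H}\int_\R|\cf g_*^2(\mu)|^2|\mu|^{1-2H}d\mu\geq c_Hh_t^{2-2H}(\ce-\eta)$, while $\textnormal{Cov}(\xi_i,\xi_j)=c_Hh_t^{2-2H}\int_\R e^{-\imath(z_i-z_j)\mu}|\cf g_*^2(\mu)|^2|\mu|^{1-2H}d\mu$, so that $R_t:=\max_{i\neq j}|\textnormal{Cov}(\xi_i,\xi_j)|$ satisfies $R_t/V_t\to0$, since the Fourier transform of the smooth integrable function $|\cf g_*^2(\mu)|^2|\mu|^{1-2H}$ decays (at least polynomially) and $|z_i-z_j|\geq S_t\to\infty$. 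I would then apply Lemma~\ref{lem:gauss_vec_ineq} with $A=h_t^2$ and $B=B_t:=\sqrt{6R_t\log t}$, which is valid for $t$ large since $V_t\geq2R_t$. The error term $\bp[U\geq B_t/\sqrt{2R_t}]=\bp[U\geq\sqrt{3\log t}]\leq t^{-3/2}$ is summable along $t_k=2^k$. For the main term, a direct computation shows $B_t/h_t^2\to0$, hence $\sqrt{(2R_t+V_t)/V_t}\,(h_t^2+B_t)\leq(1+o(1))h_t^2$; moreover $(h_t^2)^2/(2V_t)\leq\frac{u^{1+H}}{2c_H(\ce-\eta)}\log t=:\beta'\log t$ with $\beta'<1$, so by the Gaussian tail lower bound $\bp\bigl[\xi_1\leq(1+o(1))h_t^2\bigr]\leq1-c(\log t)^{-1/2}t^{-\beta''}$ for some $\beta''\in(\beta',1)$ and $t$ large. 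Thus $\bigl(\bp[\xi_1\leq(1+o(1))h_t^2]\bigr)^{n_t}\leq\exp(-n_tp_t)$ with $p_t=c(\log t)^{-1/2}t^{-\beta''}$ and $n_tp_t\to\infty$ like a positive power of $t$, which is summable along $t_k=2^k$. Feeding these two bounds into Lemma~\ref{lem:gauss_vec_ineq} yields \eqref{eq:lb-summability} and hence the proposition.

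The main obstacle is the tension, inherent in this "many near-independent copies" lower bound, between wanting many translates (so $S_t$ small, $n_t\sim th_t/S_t$ large, making $\max_i\xi_i$ large) and wanting the $\xi_i$ nearly uncorrelated (so $S_t$ large). The key point resolving it is that $R_t$ decays in $S_t$ much faster than $n_t$ shrinks: because $g_*$ is smooth and compactly supported the relevant Fourier transform decays at least polynomially, so $R_t/V_t\to0$ and $B_t=o(h_t^2)$ already for $S_t=\log t$, whereas the counting step only requires $S_t\ll t^{1-\beta''}$, leaving an enormous margin. A secondary technical point needing care is the justification that the constant $\ce$ of \eqref{eq:ce}, a priori a supremum over Schwartz functions, is approached by compactly supported functions — which is what makes the translation step meaningful — and this follows from the $\|f'\|_2^2$-domination obtained in the proof of Lemma~\ref{lem:finite_W(g^2)}.
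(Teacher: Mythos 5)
Your proposal is correct and follows essentially the same route as the paper's proof: reduction to a Gaussian supremum via Lemma~\ref{lem:la-W-ineq}, discretization by disjointly supported translates of a fixed near-optimal compactly supported test function, covariance decay via Riemann--Lebesgue, the comparison inequality of Lemma~\ref{lem:gauss_vec_ineq}, and Borel--Cantelli along $t_k=2^k$ combined with monotonicity. The only differences are cosmetic (your choice $B=\sqrt{6R_t\log t}$ versus the paper's $B=vh_t^2$, a denser family of translates, and making explicit the approximation of $\ce$ by compactly supported functions, which the paper handles implicitly by letting $r\to\infty$ at the end).
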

\begin{proof}
We divide the proof in several steps.

\noindent
\emph{Step 1: Reduction to a discrete Gaussian supremum.}
Let the constant $r > 0$ be fixed but arbitrary and set $\cn_t = 2r\Z \cap Q_{t-r}$. When $t$ is large enough (namely $t>r$ and $h_t > 1$), it is readily checked that $h_tz+Q_r \subset Q_{th_t}$ for each $z \in \cn_t.$ Hence,
$$
\sup_{g \in \cg(Q_{th_t})} W(g_t^2) \geq \max_{z \in \cn_t} \sup_{g \in \cg(h_t z + Q_{r})} W(g_t^2).
$$ 
and thus owing to \eqref{eq:la-W-ineq},
$$
\bp \lc \la_{\dot{W}} (Q_t) \leq h_t^2 \rc \leq \bp\lp \max_{z \in \cn_t} \sup_{g \in \cg(h_t z + Q_{r})} W(g_t^2) \leq h_t^2 \rp.
$$
For any $g \in \cg(Q_r)$ and $z \in \cn_t$, notice that $g^z(\cdot) \equiv g(\cdot - h_t z) \in \cg(h_t z+ Q_r)$.
Hence $\max_{z \in \cn_t} \sup_{g \in \cg(h_t z + Q_{r})} W(g_t^2) \geq \max_{z \in \cn_t} W({(g_t^z)}^2)$, for any $g \in \cg(Q_r).$  The consequent inequality is therefore:
\beq\label{eq:la_W(g^z)_ineq}
\bp \lc \la_{\dot{W}} (Q_t) \leq h_t^2 \rc \leq \bp\lp \max_{z \in \cn_t} W({(g_t^z)}^2) \leq h_t^2 \rp.
\eeq
for any given (but arbitrary) $g \in \cg(Q_r)$.

\noindent
\emph{Step 2: Control of covariance.}
For ease of presentation let us denote $W((g_t^z)^2)$ by $\xi_z(t)$. We will try to control the covariance $\textnormal{Cov}(\xi_z(t), \xi_{z'}(t))$ for $z,z' \in \cn_t$ in order to show that the assumptions of Lemma~\ref{lem:gauss_vec_ineq} are met. First notice that $\cf((g^z)_t^2)$ can be also expressed as:
$$
\cf\lp (g^z)_t^2 \rp(\xi) = \int_{\R}e^{-\imath\xi x}  {\lcl g_t^z(x) \rcl}^2 dx
= \int_{\R}e^{-\imath\xi x} {\lcl \sqrt{h_t} g^z(h_t x) \rcl}^2 dx
= \int_{\R} e^{-\imath \xi x}h_t g^2(h_t (x - z)) dx.
$$
Therefore, with change of variable $s=h_t(x-z)$ we get:
\beq\label{eq:fourier}
\cf\lp (g^z)_t^2 \rp(\xi)
= e^{-\imath \xi z} \int_{\R} e^{-\imath \xi \frac{s}{h_t}} g^2(s) ds
= e^{-\imath\xi z} \cf g^2\lp\dfrac{\xi}{h_t}\rp
\eeq
Hence, the covariance of the random field $\xi_z(t)$ is given by
\begin{align*}
\textnormal{Cov}\lp \xi_z(t), \xi_{z'}(t) \rp &= \int_{\R} \cf (g^z)_t^2(\xi) \overline{\cf(g^{z'})_t^2 (\xi)} \mu (d\xi)\\
& = c_H \int_{\R} e^{\imath \xi(z-z')} {\lln \cf g^2\lp\dfrac{\xi}{h_t}\rp \rrn}^2 {\lln \xi \rrn}^{1-2H} d\xi,
\end{align*}
where the last equality follows by using \eqref{eq:fourier} and by plugging in the value of $\mu$. Changing variable, we can rewrite the covariance as
\beq\label{eq:cov_xi}
\textnormal{Cov}\lp \xi_z(t), \xi_{z'}(t) \rp = c_H h_t^{2(1-H)} \int_{\R} e^{\imath h_t u(z-z')} {\lln \cf g^2(u) \rrn}^2 {|u|}^{1-2H} du.
\eeq
In particular, we have 
\beq\label{eq:var_xi}
\textnormal{Var} \lp \xi_0(t) \rp = c_H h_t^{2(1-H)} \si^2(g)
\eeq
where $\si^2(g) = \int_{\R} {\lln \cf g^2(u) \rrn}^2 {|u|}^{1-2H} du$, which is a finite quantity when $g \in \cg(Q_r)$ according to \eqref{eq:cfg^2-mu-L1}.

Recall that $h_t = \sqrt{u} {\lp  \log t\rp}^{\frac{1}{2(1+H)}}$ and thus $h_t \to \infty$ as $t \to \infty$. In addition, $h_{t}|z-z'|\ge 2h_{t} r$ uniformly for $z\neq z'$ in $\cn_{t}$. Also observe again that $g \in \cg(Q_r)$ and hence $G(u) = {\lln \cf g^2(u) \rrn}^2 {|u|}^{1-2H}$ is in $L^1$ thanks to \eqref{eq:cfg^2-mu-L1}. By Riemann-Lebesgue lemma, we get the following assertion uniformly for $z\neq z'$ in $\cn_{t}$:
$$
\lim_{t \to \infty} \int_{\R} e^{\imath h_t u(z-z')} {\lln \cf g^2(u) \rrn}^2 {|u|}^{1-2H} du = 0.
$$
Therefore, plugging this information into \eqref{eq:cov_xi}, we end up with
\beq\label{eq:R_t-asymp}
R_t = \max_{\substack{{z, z' \in \cn_t}\\{z\neq z'}}} \lln \textnormal{Cov} \lp \xi_z(t), \xi_{z'}(t) \rp \rrn = o(h_t^{2(1-H)}).
\eeq
Furthermore observe that \eqref{eq:var_xi} implies $\lim_{t \to \infty} [\textnormal{Var} (\xi_0(t)) / {h_t^{2(1-H)}} ]  = c_H \si^2(g) > 0$. Thus we also get $\textnormal{Var} (\xi_0(t)) \geq 2R_t$ for $t$ sufficiently large. Summarizing our considerations so far, we have proved that the family $\lcl \xi_z(t); z \in \cn_t \rcl$ satisfies the conditions of Lemma~\ref{lem:gauss_vec_ineq} if $t$ is large enough. We now introduce an additional parameter $v > 0$ (to be chosen small enough later on) and we resort to Lemma~\ref{lem:gauss_vec_ineq} with $A=h_t^2$ and $B=vh_t^2$ in order to write:
\beq\label{eq:max_gauss_ineq}
\bp \lc \max_{z \in \cn_t} \xi_z(t)  \leq h_t^2\rc \leq V_t^1 + V_t^2 
\eeq
where $R_t$ is defined by \eqref{eq:R_t-asymp}, and
$$
V_t^1 = {\lp \bp\lc \xi_0(t) \leq (1+v)h_t^2 \sqrt{\dfrac{2R_t + \textnormal{Var}(\xi_0(t))}{\textnormal{Var} (\xi_0(t))}} \rc \rp}^{\lln\cn_t\rrn}, \quad V_t^2 = \bp \lc  U \geq \dfrac{v h_t^2}{\sqrt{2R_t}}\rc.
$$
We now bound these two terms separately. 

\noindent
\emph{Step 3: Bound on $V_t^2$.}
First, we bound the term $V_t^2$ on the right hand side in \eqref{eq:max_gauss_ineq}. By a classical bound on the normal tail probabilities:
\beq\label{eq:gaussian_tail_1}
\bp \lc U \geq \dfrac{v h_t^2}{\sqrt{2R_t}} \rc \leq \dfrac{1}{\sqrt{2 \pi}} \dfrac{\sqrt{2R_t}}{v h_t^2} \exp\lp -\dfrac{v^2 h_t^4}{4R_t} \rp.
\eeq
Since by \eqref{eq:R_t-asymp}, $\frac{R_t}{h_t^{2(1-H)}} \to 0$ as $t \to \infty$ we have for $t$ sufficiently large
\beq\label{eq:V_t^2-ineq1}
\dfrac{1}{\sqrt{2 \pi}} \dfrac{\sqrt{2R_t}}{v h_t^2} < 1.
\eeq
As for the term inside the exponential in \eqref{eq:gaussian_tail_1}, observe that (recall $h_t = \sqrt{u} {\lp \log t\rp}^{\frac{1}{2(1+H)}}$ again)
$$
\frac{v^2 h_t^4}{4 R_t} = \frac{v^2 h_t^{2(1+H)}}{4} \frac{h_t^{2(1-H)}}{R_t} = \frac{v^2 u^{1+H} }{4} \frac{h_t^{2(1-H)}}{R_t}\log t,
$$ 
and that for $t$ sufficiently large 
\beq\label{eq:V_t^2-ineq-2}
\dfrac{R_t}{h_t^{2(1-H)}} \dfrac{4}{v^2 u^{1+H}} < \dfrac{1}{2}.
\eeq
Plugging \eqref{eq:V_t^2-ineq1} and \eqref{eq:V_t^2-ineq-2} into \eqref{eq:gaussian_tail_1}, for $t$ sufficiently large we have
\beq\label{eq:max_gauss_secondterm}
V_t^2 = \bp \lc U \geq \dfrac{v h_t^2}{\sqrt{2R_t}} \rc \leq \dfrac{1}{\sqrt{2\pi}} \dfrac{\sqrt{2R_t}}{v h_t^2} \exp\lp -\dfrac{v^2 h_t^4}{4R_t} \rp \leq \exp\lp -2 \log t \rp = \dfrac{1}{t^2}.
\eeq

\noindent
\emph{Step 4: Bound on $V_t^1$.}
Let us now bound $V_t^1$ in \eqref{eq:max_gauss_ineq}, which can be written as
\beq
V_t^1 = {\lp \bp \lc \dfrac{\xi_0(t)}{\sqrt{\textnormal{Var}{\lp\xi_0(t) \rp} }} \leq \dfrac{(1+v)h_t^2 \sqrt{2R_t + \textnormal{Var}(\xi_0(t))}}{\textnormal{Var}(\xi_0(t))} \rc \rp}^{\lln \cn_t \rrn}.
\eeq
Therefore according to relation \eqref{eq:var_xi}, we have 
$$
V_t^1 = {\lp \bp \lc U \leq l_t \rc \rp}^{\lln \cn_t \rrn} 
$$
where we have set
$$
l_t = \dfrac{(1+v) h_t^{1+H}}{c_H \si^2(g)} \sqrt{2 \dfrac{R_t}{h_t^{2(1-H)}} + c_H \si^2(g)}.
$$ 
In order to work with this last term we can rewrite it as:
\begin{align}\label{eq:V_t^1-1}
V_t^1 = {\lp\bp\lc U \leq l_t\rc \rp}^{\lln \cn_t \rrn} &= {\lp 1 - \bp\lc U > l_t \rc \rp}^{\lln \cn_t \rrn}\nonumber \\ 
&= {\lc \exp \lp \dfrac{1}{\bp [U > l_t]} \log\lp 1 - \bp[U>l_t] \rp \rp \rc}^{\lln \cn_t \rrn \bp[U > l_t]}.
\end{align}
Owing to \eqref{eq:R_t-asymp} and the fact that $\lim_{t \to \infty} h_t = \infty$, it is easily checked that $\lim_{t \to \infty} l_t = \infty$. Therefore, 
\beq\label{eq:e^{-1}}
\lim_{t \to \infty} \exp\lp \dfrac{1}{\bp[U>l_t]} \log \lp 1-\bp[U>l_t] \rp \rp = e^{-1}
\eeq
Let us now concentrate on $\lln \cn_t \rrn \bp[U > l_t]$ in \eqref{eq:V_t^1-1}. We use the following elementary facts about $l_t$:
\begin{itemize}
\item[(i)] $\lim_{t \to \infty} \frac{R_t}{h_t^{2(1-H)}} = 0$, and thus
\beq\label{eq:l_t-asymp}
\lim_{t \to \infty} \lp l_t - c_{v,g} h_t^{1+H} \rp = 0, \text{ where }c_{v,g} = \frac{(1+v)}{c_H^{1/2} \si(g)}.
\eeq

\item[(ii)] Since $\lim_{t \to \infty} l_t = \infty$, we have $\lim_{t \to \infty} l_t e^{l_t^2/2} \bp[U > l_t] = \frac{1}{\sqrt{2 \pi}}$.
\end{itemize}
Using this information it is easy to see that 
$$
\bp[U > l_t] \sim \dfrac{e^{-\frac{l_t^2}{2}}}{l_t} \sim \dfrac{\exp\lp -\frac{c_{v,g}^2}{2} h_t^{2(1+H)} \rp}{\sqrt{2 \pi}c_{v,g} h_t^{1+H}}
$$
With the expression $h_t = \sqrt{u} {\lp \log(t) \rp}^{\frac{1}{2(1+H)}}$ in mind, this yields 
$$
\bp [U > l_t] \sim \dfrac{\exp\lp -\frac{c_{v,g}^2 u^{1+H}}{2} \log(t) \rp}{\sqrt{2 \pi}{c_{v,g}\lc u^{1+H} \log(t) \rc }^{1/2}}
$$
and thus
$$
\bp[U > l_t] \sim \dfrac{1}{\sqrt{2 \pi}c_{v,g} {\lc u^{1+H} \log(t)\rc}^{1/2} t^{\frac{c_{v,g}^2 u^{1+H}}{2}}}
$$
In addition, we also have $\lln \cn_t \rrn \sim \frac{t}{r}$, which yields
$$
{\lln \cn_t \rrn} \bp [U > l_t] \sim \dfrac{t^{1-\frac{c_{v,g}^2 u^{1+H}}{2}}}{ \sqrt{2 \pi}c_{v,g} {\lc u^{1+H} \log(t) \rc}^{1/2} r}
$$
Recall now that $c_{v,g}$ is given by expression \eqref{eq:l_t-asymp}. Hence, choosing $v$ small enough and provided 
\beq\label{eq:u-upper-bound}
u < {\lp 2 c_H \si^2(g) \rp}^{{1}/{1+H}},
\eeq
we can check that $\frac{c_{v,g}^2 u^{1+H}}{2} < 1$. Consequently, for $t$ large enough we obtain:
\beq\label{eq:t^beta}
{\lln \cn_t \rrn} \bp [U > l_t] \geq t^{\beta}, \text{ for some }\beta>0.
\eeq
Plugging \eqref{eq:e^{-1}} and \eqref{eq:t^beta} into \eqref{eq:V_t^1-1}, we get the following relation for $t$ sufficiently large: 
\beq\label{eq:max_gauss_firstterm}
V_t^1 = {\lp \bp\lc \xi_0(t) \leq (1+v)h_t^2 \sqrt{\dfrac{2R_t + \textnormal{Var}(\xi_0(t))}{\textnormal{Var} (\xi_0(t))}} \rc \rp}^{\lln \cn_t \rrn} \leq e^{-t^{\beta}}.
\eeq

\noindent
\emph{Step 5: Conclusion.}
Reporting inequalities \eqref{eq:max_gauss_secondterm} and \eqref{eq:max_gauss_firstterm} into \eqref{eq:max_gauss_ineq}, we find:
$$
\bp \lc \max_{z \in \cn_t} W((g^z)_t^2) \leq h_t^2 \rc \leq \frac{1}{t^2} + \frac{1}{e^{t^\beta}},
$$
for some $\beta > 0$ whenever 
$$
u < {\lp 2c_H \si^2(g) \rp}^{\frac{1}{1+H}}.
$$
Now appealing to \eqref{eq:la_W(g^z)_ineq} and using Borel-Cantelli Lemma:
$$
\liminf_{k \to \infty} h_{t_k}^{-2}\la_{\dot{W}}(Q_{t_k}) > 1 \hspace{0.2in}\textnormal{a.s.}
$$
for some increasing sequence $t_k$ of integers. Thus the expression $h_t = \sqrt{u} {(\log t)}^{\frac{1}{2(1+H)}}$ and some elementary monotonicity arguments show that
$$
\liminf_{t \to \infty} \la_{\dot{W}} (Q_t) (\log t)^{-1/(1+H)} > u \hspace{0.2in}\textnormal{a.s.}
$$
Now thanks to \eqref{eq:u-upper-bound} and taking $u \uparrow {\lp 2 c_H \si^2(g) \rp}^{\frac{1}{1+H}}$, we have for every $g \in \cg(Q_r)$,
$$
\liminf_{t \to \infty} \la_{\dot{W}}(Q_t)(\log t)^{-1/(1+H)}  \geq {\lp 2 c_H \si^2(g) \rp}^{\frac{1}{1+H}} \hspace{0.2in} {\textnormal{a.s.}}
$$
Recall that $\ce = \sup_{g \in \cg(\R)} \si^2(g)$. Hence taking supremum over $g \in \cg(Q_r)$ and letting $r \to \infty$ gives the needed lower bound
$$
\liminf_{t \to \infty} \la_{\dot{W}}(Q_t)(\log t)^{-1/(1+H)}  \geq {\lp 2 c_H \ce \rp}^{\frac{1}{1+H}} \hspace{0.2in} \textnormal{a.s.}
$$
\end{proof}

\section{Lyapounov exponent}
In this section we will combine the Feynman-Kac representation of $u$ and our preliminary study of the principal eigenvalue $\la_{\dot{W}} (Q_t)$ in order to get the logarithmic behavior of $u_t(x)$, achieving the proof of our main Theorem~\ref{thm:Introthm-4}. 
\subsection{Preliminary Results.}
Recall that $V^{\ep}$ is defined by \eqref{eq:V_t^ep}, and observe that one can also write
$$
V_{t}^{\ep}(x) = \int_0^t \dot{W}^{\ep} (B_s) ds, 
$$
where $\dot{W}^{\ep}$ is the regularized noise given by 
\beq\label{eq:noise-regularised}
\dot{W}^{\ep} (x) = \int_{\R} l_{\ep}(x-y)W(dy).
\eeq

The following lemma will allow us to extend the domains on which principal eigenvalues are computed. The interested reader is referred to \cite[Lemma 2.2]{chen-ann14} for a proof.
\begin{lemma}\label{lem:-1}
Let $W$ be the Gaussian noise whose covariance is defined by \eqref{eq:b}, and let $\dot{W}^{\ep}$ be defined by relation \eqref{eq:noise-regularised}. For a bounded measurable set $D \subset \R$ we write $D_{\ep} = (-\ep, \ep)+D$ and define for positive $\theta$ the eigenvalue type quantity $\la_{\theta \dot{W}}^{+} (D)$ by:
$$
\la_{\theta \dot{W}}^{+} (D) : = \lim_{\ep \downarrow 0} \la_{\theta\dot{W}} (D_{\ep}) 
$$
Then $\la_{\theta \dot{W}^{\ep}}(D)$ is bounded as follows:
$$ 
\la_{\theta \dot{W}} (D) \leq \liminf_{\ep \downarrow 0} \la_{\theta \dot{W}^{\ep}} (D) \leq \limsup_{\ep \downarrow 0} \la_{\theta \dot{W}^{\ep}}(D) \leq \la_{\theta \dot{W}}^{+} (D) \hspace{0.2in}\textnormal{a.s.}
$$
\end{lemma}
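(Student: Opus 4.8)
The plan is to reduce both inequalities in the lemma to a comparison between the variational problem defining $\la_{\theta\dot{W}^{\ep}}(D)$ and those defining $\la_{\theta\dot{W}}$ on $D$, respectively on $D_{\ep}$. The starting point is that, since $l_{\ep}$ is an even function, Fubini's theorem applied to \eqref{eq:noise-regularised} gives $\int_D \dot{W}^{\ep}(x)\,g^2(x)\,dx = W\lp l_{\ep}\ast g^2\rp$ for every $g\in\ck(D)$, so that
$$
\la_{\theta\dot{W}^{\ep}}(D)=\sup_{g\in\ck(D)}\lcl \theta\,W\lp l_{\ep}\ast g^2\rp-\frac12\int_D {\lln g'(x)\rrn}^2\,dx\rcl .
$$

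\emph{Upper bound.} For $\limsup_{\ep\downarrow0}\la_{\theta\dot{W}^{\ep}}(D)\le\la^{+}_{\theta\dot{W}}(D)$ I would fix $\ep>0$ and $g\in\ck(D)$ and test the variational problem for $\la_{\theta\dot{W}}(D_{\ep})$ with the function $h:=\lp l_{\ep}\ast g^2\rp^{1/2}$. Since $g$ is supported in $\overline D$ and $l_{\ep}$ in $[-\ep,\ep]$, this $h$ is supported in $\overline{D_{\ep}}$ (and vanishes to infinite order on $\partial D_{\ep}$); moreover ${\|h\|}_2^2=\int_{\R}l_{\ep}\ast g^2=1$ and $h^2=l_{\ep}\ast g^2$, so $W(h^2)=W(l_{\ep}\ast g^2)$. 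From $hh'=l_{\ep}\ast(gg')$ where $h>0$, and $h'=0$ a.e.\ on $\{h=0\}$, the Cauchy--Schwarz inequality inside the convolution integral yields ${\lln h'\rrn}^2\le l_{\ep}\ast(g')^2$ a.e., hence ${\|h'\|}_2^2\le{\|g'\|}_2^2$. A routine density step (the functional $\phi\mapsto\theta W(\phi^2)-\frac12{\|\phi'\|}_2^2$ is continuous on the relevant sphere, on which $\ck(D_{\ep})$ is dense) then gives $\theta W(l_{\ep}\ast g^2)-\frac12{\|g'\|}_2^2\le\la_{\theta\dot{W}}(D_{\ep})$, and taking the supremum over $g\in\ck(D)$ produces the \emph{pathwise} bound $\la_{\theta\dot{W}^{\ep}}(D)\le\la_{\theta\dot{W}}(D_{\ep})$. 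Since $D_{\ep_1}\subseteq D_{\ep_2}$ when $\ep_1<\ep_2$, the map $\ep\mapsto\la_{\theta\dot{W}}(D_{\ep})$ is non-decreasing, so the limit $\la^{+}_{\theta\dot{W}}(D)=\lim_{\ep\downarrow0}\la_{\theta\dot{W}}(D_{\ep})$ exists and $\limsup_{\ep\downarrow0}\la_{\theta\dot{W}^{\ep}}(D)\le\la^{+}_{\theta\dot{W}}(D)$.

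\emph{Lower bound.} For $\liminf_{\ep\downarrow0}\la_{\theta\dot{W}^{\ep}}(D)\ge\la_{\theta\dot{W}}(D)$ I would fix $g\in\ck(D)$ and keep it as the single test function, getting $\la_{\theta\dot{W}^{\ep}}(D)\ge\theta W(l_{\ep}\ast g^2)-\frac12{\|g'\|}_2^2$; it then suffices to prove $W(l_{\ep}\ast g^2)\to W(g^2)$ as $\ep\downarrow0$. By \eqref{eq:cov_noise_Fourier}, the difference is a centred Gaussian variable with variance $\int_{\R}{\lln 1-\cf l(\ep\xi)\rrn}^2{\lln\cf g^2(\xi)\rrn}^2\mu(d\xi)$, which tends to $0$ by dominated convergence (recall $\cf l(\ep\xi)\to1$ and $\cf l\le1$, while $g^2\in\ch$ by the proof of Lemma~\ref{lem:finite_W(g^2)}); splitting at $|\xi|=\ep^{-1}$ and using the decay $|\cf g^2(\xi)|\lesssim_{g}|\xi|^{-2}$ (two integrations by parts) gives the quantitative rate ${\|l_{\ep}\ast g^2-g^2\|}_{\ch}\lesssim_{g}\ep$. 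A Borel--Cantelli argument along $\ep_k=2^{-k}$, together with the continuity in $\ep$ of the Gaussian family $\ep\mapsto W(l_{\ep}\ast g^2)$ (which has a Lipschitz $L^2$-modulus, hence a continuous version), upgrades the $L^2$-convergence to an almost sure one. Therefore $\liminf_{\ep\downarrow0}\la_{\theta\dot{W}^{\ep}}(D)\ge\theta W(g^2)-\frac12{\|g'\|}_2^2$ almost surely; running this over a countable dense subset of $\ck(D)$ and invoking the continuity of $g\mapsto\theta W(g^2)-\frac12{\|g'\|}_2^2$ yields $\liminf_{\ep\downarrow0}\la_{\theta\dot{W}^{\ep}}(D)\ge\la_{\theta\dot{W}}(D)$ almost surely.

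The step I expect to be the main obstacle is this last one: converting the $L^2(\Omega)$-convergence of $W(l_{\ep}\ast g^2)$ into an almost sure statement that is uniform both over the non-compact class $\ck(D)$ and over the continuum of parameters $\ep\downarrow0$. The plan above handles it by combining the quantitative rate in $\ch$, a Borel--Cantelli step along a geometric sequence, a continuity-in-$\ep$ estimate, and separability of $\ck(D)$. A cleaner alternative, which makes the whole proof pathwise and avoids any measure-zero bookkeeping, is to observe that mollification also converges on the Besov scale, $\dot{W}^{\ep}\to\dot{W}$ in $\cb^{-\ka',\hat{w}_{\si}}_{\infty,\infty}$ for any $\ka'>\ka$; this forces $\lla\dot{W}^{\ep}-\dot{W},g^2\rra\to0$ uniformly over all $g$ with ${\|g'\|}_2$ bounded and support in a fixed compact set, and, since $\la_{\theta\dot{W}}(D)=\sup_{M>0}\sup\{\theta W(g^2)-\frac12{\|g'\|}_2^2:\ g\in\ck(D),\ {\|g'\|}_2\le M\}$, the lower bound then follows directly.
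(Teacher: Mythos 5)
The paper does not prove this lemma; it simply defers to \cite[Lemma 2.2]{chen-ann14}, and your argument is essentially a correct reconstruction of that proof: the upper bound via the test function $h=(l_{\ep}\ast g^2)^{1/2}$ with the Cauchy--Schwarz bound $|h'|^2\le l_{\ep}\ast (g')^2$, and the lower bound via convergence of $W(l_{\ep}\ast g^2)$ to $W(g^2)$. Two small remarks: in Chen's formulation the admissible class is the $W^{1,2}$-closure rather than the smooth class $\cs(D_{\ep})$ of \eqref{eq:K}, so $h$ (which is only locally Lipschitz at its zeros) is directly admissible there and your density step becomes unnecessary; and for the lower bound the upgrade from $L^2$- to a.s.\ convergence over the whole continuum $\ep\downarrow 0$ is the genuinely delicate point, which your pathwise Besov alternative ($\dot{W}^{\ep}\to\dot{W}$ in $\cb^{-\ka',\hat{w}_{\si}}_{\infty,\infty}$ paired against $g^2$ bounded in $\cb^{\ka'}_{1,1}$) handles more cleanly than the Borel--Cantelli-plus-chaining route.
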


The second lemma below is a first relation between Feynman-Kac representations of equation~\eqref{eq:she-1} and principal eigenvalues. It is stated for a general potential $\xi$ which is pointwise defined but not necessarily bounded.

\begin{lemma}\label{lem:0}
Let $\xi:\R \mapsto {\R}$ be a potential, not necessarily bounded. Let $\tau_D$ be the stopping time defined by $\tau_D = \inf \lcl{t\geq 0: B_t \notin D}\rcl$ for a measurable bounded set $D\subset \R$. Then the following inequalities hold where $\la_{\xi}(D)$ is defined similarly to \eqref{eq:la}:
\item[(i)]We have:
\beq\label{eq:lem0}
\int_{D} \E_x \lc \exp\lcl \int_0^t \xi(B_s) ds \rcl \mathbf{1}_{\lcl \tau_D \geq t \rcl} \rc dx \leq {\lln D \rrn} \exp\lcl t \la_{\xi} \lp D \rp \rcl
\eeq

\item[(ii)]For any $\al$, $\beta > 1$ satisfying $\frac{1}{\al}+\frac{1}{\beta}=1$ and $\la_{(\beta/\al) \xi} (D) < \infty$ we have for $0 < \der < t$:
\begin{multline}\label{eq:lem0(ii)}
\int_D \E_x \lc \exp \lcl \int_0^t \xi(B_s) ds \rcl \mathbf{1}_{\lcl \tau_D \geq t \rcl} \rc dx \geq (2\pi)^{\al/2} {\der}^{1/2} t^{\al/(2 \beta)} {|D|}^{-2\al/\beta} \times \\
\exp {\lcl -\der (\al/\beta) \la_{(\beta/\al)\xi} (D) \rcl} \exp\lcl \al(t+\der)\la_{\al^{-1} \xi} (D) \rcl. 
\end{multline}
\end{lemma}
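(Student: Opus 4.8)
The plan is to treat the two items separately: \eqref{eq:lem0} is a direct semigroup estimate, while \eqref{eq:lem0(ii)} requires a H\"older-type splitting of the Feynman--Kac functional together with a short-time smoothing of length $\der$.

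\emph{Item (i).} I would first reduce to a bounded potential: replacing $\xi$ by $\xi\wedge n$, the left-hand side of \eqref{eq:lem0} increases to its value for $\xi$ by monotone convergence, while $\la_{\xi\wedge n}(D)\uparrow\la_\xi(D)$ by exchanging the two suprema in the variational formula (this being the definition of $\la_\xi(D)$ alluded to in the statement, cf.\ \eqref{eq:la}). So one may assume $\xi$ bounded. For bounded $\xi$, the Dirichlet Feynman--Kac operator $T_t^Df(x)=\E_x[\exp\{\int_0^t\xi(B_s)ds\}f(B_t)\mathbf 1_{\{\tau_D\ge t\}}]$ defines a self-adjoint strongly continuous semigroup on $L^2(D)$ with generator $\tfrac12\Delta+\xi$ (Dirichlet conditions), whose supremum of spectrum is exactly $\la_\xi(D)$. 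Hence $\|T_t^D\|_{L^2(D)\to L^2(D)}=e^{t\la_\xi(D)}$, and Cauchy--Schwarz gives
\[
\int_D\E_x\Big[\exp\Big\{\int_0^t\xi(B_s)ds\Big\}\mathbf 1_{\{\tau_D\ge t\}}\Big]dx=\big\langle\mathbf 1,T_t^D\mathbf 1\big\rangle_{L^2(D)}\le\|T_t^D\mathbf 1\|_2\|\mathbf 1\|_2\le e^{t\la_\xi(D)}|D|,
\]
which is \eqref{eq:lem0}.

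\emph{Item (ii).} Here too I would first truncate $\xi$ to a bounded potential; on the left-hand side this is harmless by monotone convergence, while $\la_{\al^{-1}\xi\wedge n}(D)\uparrow\la_{\al^{-1}\xi}(D)$ and $\la_{(\beta/\al)\xi\wedge n}(D)\uparrow\la_{(\beta/\al)\xi}(D)$, and $\la_{\al^{-1}\xi}(D)<\infty$ since $c\mapsto\la_{c\xi}(D)$ is convex and finite at $c=\beta/\al$ by hypothesis. For bounded $\xi$, let $\psi\ge0$ be the principal eigenfunction of $\tfrac12\Delta+\al^{-1}\xi$ on $D$ with Dirichlet conditions, normalized by $\|\psi\|_{L^2(D)}=1$, and write $\la:=\la_{\al^{-1}\xi}(D)$, so that the Feynman--Kac formula yields
\[
\E_x\Big[\exp\Big\{\tfrac1\al\int_0^s\xi(B_r)dr\Big\}\psi(B_s)\mathbf 1_{\{\tau_D\ge s\}}\Big]=e^{s\la}\psi(x),\qquad s\ge0,\ x\in D.
\]
Taking $s=t+\der$ and applying the Markov property at time $t$ writes the left-hand side as $e^{(t+\der)\la}\psi(x)=\E_x[\exp\{\tfrac1\al\int_0^t\xi\}\mathbf 1_{\{\tau_D\ge t\}}F(B_t)]$ with $F(z)=\E_z[\exp\{\tfrac1\al\int_0^\der\xi\}\psi(B_\der)\mathbf 1_{\{\tau_D\ge\der\}}]$; I would bound $F$ from above by H\"older's inequality with the conjugate pair $(\beta,\al)$, separating the exponential weight over $[0,\der]$ --- controlled through item (i) applied with potential $\tfrac\beta\al\xi$, hence contributing $\la_{(\beta/\al)\xi}(D)$ --- from a moment of $\psi(B_\der)$, controlled by the free heat-kernel bound $\|p_\der(z-\cdot)\|_{L^2}=(4\pi\der)^{-1/4}$ and $\|\psi\|_2=1$. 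This produces a lower bound of the form $\E_x[\exp\{\tfrac1\al\int_0^t\xi\}\psi(B_t)\mathbf 1_{\{\tau_D\ge t\}}]\ge c(\al,\der,|D|)\,e^{(t+\der)\la}e^{-\der(\al/\beta)\la_{(\beta/\al)\xi}(D)}\psi(x)$. Finally a second H\"older inequality on $[0,t]$, with the conjugate pair $(\al,\beta)$, converts the weight $e^{\frac1\al\int_0^t\xi}$ and the test function $\psi(B_t)$ into $(e^{\int_0^t\xi}\mathbf 1_{\{\tau_D\ge t\}})^{1/\al}(\psi(B_t)^\beta\mathbf 1_{\{\tau_D\ge t\}})^{1/\beta}$; after raising to the power $\al$, bounding $\E_x[\psi(B_t)^\beta\mathbf 1_{\{\tau_D\ge t\}}]$ by another heat-kernel estimate (the source of the $t$-dependent factor), integrating in $x\in D$ with $\|\psi\|_2=1$ and H\"older in $x$ (the source of the $|D|$-dependent factor), and collecting constants, one reaches \eqref{eq:lem0(ii)} with prefactor $(2\pi)^{\al/2}\der^{1/2}t^{\al/(2\beta)}|D|^{-2\al/\beta}$.

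I expect the genuine difficulty to lie entirely in Item (ii): first, the bookkeeping of constants and exponents, since at each H\"older step the conjugate pair must be chosen so that precisely $\la_{\al^{-1}\xi}(D)$ and $\la_{(\beta/\al)\xi}(D)$ appear and the Gaussian estimates combine to give exactly $(2\pi)^{\al/2}\der^{1/2}t^{\al/(2\beta)}|D|^{-2\al/\beta}$; and second, justifying the eigenfunction machinery for the merely pointwise, possibly unbounded potential, which the truncation step at the start of each item is designed to circumvent. Item (i) and the overall architecture --- eigenfunction identity, Markov property at time $t$, two H\"older splittings, heat-kernel bounds --- are comparatively routine.
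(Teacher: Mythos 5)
Your item (i) is correct and is essentially the paper's own argument: the paper writes down the same Dirichlet Feynman--Kac semigroup $T_t$ on $L^2(D)$, identifies the supremum of the spectrum of its generator with $\la_\xi(D)$ via the Dirichlet form, and then deduces \eqref{eq:lem0} by "spectral representation techniques", deferring details to \cite{Chen-Book}; your bound $\langle \mathbf{1}, T_t\mathbf{1}\rangle\le |D|e^{t\la_\xi(D)}$ is exactly that, and the truncation $\xi\wedge n$ is a sensible way to make the spectral identification rigorous. For item (ii), however, the paper gives no proof at all (it cites \cite{Chen-Poisson}), so you are attempting more than the text does --- and your sketch contains gaps that are structural, not merely "bookkeeping of constants".

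Concretely: (a) after the Markov decomposition $e^{(t+\der)\la}\psi(x)=\E_x\big[e^{\frac1\al\int_0^t\xi}\mathbf{1}_{\{\tau_D\ge t\}}F(B_t)\big]$, note that $F=T_\der\psi=e^{\der\la}\psi$, so the identity is the eigenfunction relation again; more to the point, an upper bound $F\le G$ only yields $e^{(t+\der)\la}\psi(x)\le\E_x[\cdots G(B_t)]$, i.e.\ a lower bound on $\E_x[\cdots G(B_t)]$, \emph{not} on $\E_x[\cdots\psi(B_t)]$. To reach your displayed intermediate lower bound you would need $F\le C\psi$ pointwise, which your H\"older bound cannot give (your majorant does not vanish at $\partial D$ while $\psi$ does). (b) Your control of the $[0,\der]$ piece invokes item (i), but (i) bounds $\int_D\E_z\big[e^{\frac{\beta}{\al}\int_0^\der\xi}\mathbf{1}_{\{\tau_D\ge\der\}}\big]\,dz$, whereas your argument needs this quantity pointwise in $z=B_t$ inside an expectation, before any integration over the starting point $x$; a supremum bound on the semigroup is an ultracontractivity statement that (i) does not provide. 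The workable organization is to integrate over $x\in D$ first, apply H\"older with exponents $(\al,\beta)$ both inside $\E_x$ and in the $dx$ integral, and use the Markov property together with $\int_D p^D_t(x,z)\,dx\le1$ so that item (i) enters in integrated form; this produces the factors $|D|^{-\al/\beta}e^{-\der(\al/\beta)\la_{(\beta/\al)\xi}(D)}$ and reduces \eqref{eq:lem0(ii)} to a lower bound on $\big(\int_D\E_x[e^{\frac1\al\int_0^{t+\der}\xi}\mathbf{1}_{\{\tau_D\ge t+\der\}}]dx\big)^{\al}$, handled by the spectral theorem and a lower bound on $\|\psi\|_1$. (c) Your final step requires bounding $\E_x[\psi^\beta(B_t)]$ using only $\|\psi\|_2=1$; since $\beta>2$ in the regime $\al$ close to $1$ used later in the paper, no heat-kernel/H\"older estimate controls $\|\psi\|_\beta$ by $\|\psi\|_2$, so this step fails without an additional $L^\infty$ bound on the eigenfunction.
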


\begin{proof}
The proof of \eqref{eq:lem0} relies on classical Feynman-Kac representations of semigroups. Namely if $T_t g$ is the semigroup on $L^2(D)$ defined by
\beq\label{eq:T_t}
T_t g(x) = \E_x \lc \exp \lcl \int_0^t \xi(B_s) ds \rcl g(B_t) \mathbf{1}_{\lcl \tau_D \geq t \rcl}  \rc, ~t \geq 0, x \in D,
\eeq
it can be shown that the generator $A$ of $T_t$ admits a Dirichlet form defined by 
$$
\lla g, Ag \rra = \int_{D} \xi(x) g^2(x) dx - \dfrac{1}{2} \int_{D} {\lln \nabla g(x) \rrn}^2 dx.
$$
One can prove that 
$$
\la_0 \equiv \sup_{\substack{g \in \cd(A)\\ \|g\| = 1}} \lla g, Ag\rra = \sup_{g \in \ck(D)} \lla g, Ag\rra = \la_{\xi}(D).
$$
Then \eqref{eq:lem0} is obtained thanks to some spectral representation techniques. The reader is referred to \cite{Chen-Book} for further details and to \cite{Chen-Poisson} for the lower bound \eqref{eq:lem0(ii)}.
\end{proof}

The following lemma holds as a consequence of the Markov property for the Brownian motion $B$, and will yield a second relation between Lyapounov exponent and our principal eigenvalue. It is borrowed from \cite[Section 4]{Chen-Poisson}.
\begin{lemma}\label{lem:1}
Let $\xi$ be a potential from $\R$ to $\R$ and $D$ be a measurable bounded set. Let $0< \der < t$ and assume $0 \in D$. Let $\frac{1}{\al} + \frac{1}{\beta} = 1$.
\begin{itemize}
\item[(i)] The following upper bound holds true:
\begin{align*}
\E_0 \lc \exp\lcl \int_0^t \xi(B_s) ds \rcl \mathbf{1}_{\lcl \tau_D \geq t \rcl} \rc &\leq {\lp \E_0 \exp {\lcl \beta \int_0^{\der} \xi(B_s) ds \rcl} \rp}^{1/\beta} \times \\ 
&{\lp \dfrac{1}{(2\pi\der)^{d/2}} \int_D \E_x {\lc \exp {\lcl \al \int_0^{t-\der} \xi(B_s)ds \rcl} \mathbf{1}_{\lcl \tau_D \geq t-\der \rcl} \rc} dx \rp}^{1/\al}.
\end{align*}

\noindent
\item[(ii)] We also have the corresponding lower bound:
\begin{align*}
\E_0 \lc \exp {\lcl \int_0^t \xi(B_s)ds \rcl} \rc &\geq {\lp \E_0 \exp {\lcl -\dfrac{\beta}{\al} \int_0^{\der} \xi(B_s) ds \rcl} \rp}^{-\al/\beta} \times\\
&{\lp \int_D p_{\der}(x) \E_x {\lc \exp \lcl \dfrac{1}{\al} \int_0^{t-\der} \xi(B_s) ds \rcl \mathbf{1}_{\lcl \tau_D \geq t-\der \rcl} \rc} dx \rp}^{\al},
\end{align*}
where we recall that $p_{\der}$ designates the heat kernel in $\R$ (see Notations in the Introduction).
\end{itemize}
\end{lemma}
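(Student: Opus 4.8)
The plan is to establish both bounds by the same two-step recipe: split the time integral at the intermediate time $\der$, invoke the Markov property of $B$ at time $\der$, and then decouple the contribution of $[0,\der]$ from that of $[\der,t]$ by a Hölder inequality (a direct one for the upper bound in (i), a reverse one for the lower bound in (ii)). Throughout I would abbreviate $h_a(x)=\E_x\lc\exp\lcl a\int_0^{t-\der}\xi(B_s)\,ds\rcl\mathbf{1}_{\lcl\tau_D\geq t-\der\rcl}\rc$ for $a>0$, recording the two elementary facts that $h_a(x)=0$ for $x\notin D$ (since then $\tau_D=0<t-\der$) and that $p_\der(x)\le(2\pi\der)^{-d/2}$.

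For part (i), I would use $\lcl\tau_D\geq t\rcl\subset\lcl\tau_D\geq\der\rcl$, write $\int_0^t=\int_0^\der+\int_\der^t$, and apply Hölder with conjugate exponents $\beta,\al$, distributing the factor $\mathbf{1}_{\lcl\tau_D\geq t\rcl}$ (an indicator is unchanged under any positive power). The first factor is bounded above by simply dropping the indicator, giving $\bigl(\E_0\exp\lcl\beta\int_0^\der\xi(B_s)\,ds\rcl\bigr)^{1/\beta}$. For the second factor, the Markov property at $\der$ factorizes $\mathbf{1}_{\lcl\tau_D\geq t\rcl}$ as $\mathbf{1}_{\lcl\tau_D\geq\der\rcl}$ times the event that the post-$\der$ path stays in $D$ for a further time $t-\der$, so
$$
\E_0\lc\exp\lcl\al\!\int_\der^t\!\xi(B_s)\,ds\rcl\mathbf{1}_{\lcl\tau_D\geq t\rcl}\rc=\E_0\lc\mathbf{1}_{\lcl\tau_D\geq\der\rcl}\,h_\al(B_\der)\rc\le\int_\R p_\der(x)\,h_\al(x)\,dx\le\frac{1}{(2\pi\der)^{d/2}}\int_D h_\al(x)\,dx,
$$
and raising to the power $1/\al$ produces exactly the claimed bound.

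For part (ii), I first discard mass: since $\exp\lcl\int_0^t\xi(B_s)\,ds\rcl\ge0$, the quantity $\E_0\lc\exp\lcl\int_0^t\xi(B_s)\,ds\rcl\rc$ dominates the same expectation with the event $\lcl B_s\in D,\ \der\le s\le t\rcl$ inserted, and by the Markov property at $\der$ this restricted expectation equals $\E_0\lc\exp\lcl\int_0^\der\xi(B_s)\,ds\rcl\,h_1(B_\der)\rc$. Now I would apply the \emph{reverse} Hölder inequality with $p=1/\al\in(0,1)$ and its negative conjugate exponent $q=-\beta/\al$ (using $\al-1=\al/\beta$); this is legitimate because $\exp\lcl\int_0^\der\xi(B_s)\,ds\rcl>0$, and the resulting bound is vacuous if $\E_0\exp\lcl-\tfrac\beta\al\int_0^\der\xi(B_s)\,ds\rcl=\infty$. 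It gives
$$
\E_0\lc\exp\lcl\int_0^\der\xi(B_s)\,ds\rcl h_1(B_\der)\rc\ge\lp\E_0\lc h_1(B_\der)^{1/\al}\rc\rp^{\al}\lp\E_0\lc\exp\lcl-\tfrac\beta\al\int_0^\der\xi(B_s)\,ds\rcl\rc\rp^{-\al/\beta}.
$$
The last factor is already the one appearing in the statement. For the first, a Jensen-type estimate closes the gap: for $Y\ge0$ and an event $A$, normalizing $\mathbf{1}_A\bP$ to a probability measure yields $\E[Y^{1/\al}\mathbf{1}_A]\le\bP(A)^{1-1/\al}\E[Y\mathbf{1}_A]^{1/\al}\le\E[Y\mathbf{1}_A]^{1/\al}$, hence $h_1(x)^{1/\al}\ge h_{1/\al}(x)$; since $p_\der\ge0$ and $h_{1/\al}$ vanishes off $D$, integrating against $p_\der$ and raising to the power $\al$ produces the missing factor $\bigl(\int_D p_\der(x)\,h_{1/\al}(x)\,dx\bigr)^{\al}$, which is precisely what the statement requires.

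Each step is individually routine; the only points that need genuine care are the bookkeeping of the Markov decomposition of the event $\lcl\tau_D\geq t\rcl$ at time $\der$ and, in part (ii), the correct choice of the reverse-Hölder exponents together with the final Jensen step that moves the exponent $1/\al$ inside the inner Feynman--Kac functional. This is the argument of \cite[Section~4]{Chen-Poisson}.
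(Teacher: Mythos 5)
Your argument is correct. The paper itself gives no proof of this lemma --- it simply defers to \cite[Section~4]{Chen-Poisson} --- and your time-splitting at $\der$, Markov property, H\"older/reverse-H\"older decoupling, and the final Jensen step $h_1(x)^{1/\al}\ge h_{1/\al}(x)$ reproduce exactly the standard argument of that reference, with the right caveat about the degenerate case $\E_0\exp\{-\tfrac{\beta}{\al}\int_0^{\der}\xi(B_s)\,ds\}=\infty$ in part (ii).
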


\subsection{Upper Bound.}
We can now apply the preliminary results on exponential functionals of $B$ recalled in the last section, in order to get a first comparison between $\log(u_t(0))$ and the principal eigenvalue $\la_{\dot{W}}(Q_t)$. The logarithmic asymptotic behavior of $u_t(x)$ can be upper bounded thanks to the following result.
\begin{proposition}\label{prop:UB}
Let $\lbrace u_t(x); t\geq 0, x \in \R \rbrace$ be the field defined by \eqref{eq:f-k}. Then the following holds:
\beq\label{eq:UB-result}
\limsup_{t \to \infty}\dfrac{\frac{1}{t}\log(u_t(0))}{\la_{\dot{W}} (Q_t)} \leq 1,
\eeq
where $Q_t = (-t, t)$ and $\la_{\dot{W}}(D)$ is defined by \eqref{eq:la} for a domain $D$.
\end{proposition}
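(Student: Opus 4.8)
\emph{Overall strategy.} Since $u_t(0)=\E_0[\exp(V_t(0))]$ with $V_t(0)=\int_0^t W(\der_{B_s})ds$ (see \eqref{eq:f-k}, \eqref{eq:V_t-def}) and $W(\der_{B_s})$ is not a pointwise potential, the plan is to work with the regularized noise $\dot{W}^\ep=\dot{W}\ast l_\ep$, for which $V_t^\ep(0)=\int_0^t\dot{W}^\ep(B_s)ds$ uses a genuine (smooth, at most polynomially growing) potential, so that Lemmas~\ref{lem:0} and~\ref{lem:1} apply. Because $u_t^\ep(0)=\E_0[\exp(V_t^\ep(0))]\to u_t(0)$ as $\ep\downarrow 0$ for each fixed $t$ (Propositions~\ref{prop:F-K} and~\ref{prop:exp_moment_conv}), it is enough to bound $\frac1t\log u_t^\ep(0)$. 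First I would fix a confinement radius $R_t=t\log t$ — the only features used being that $R_t^2/t\gg t(\log t)^{1/(1+H)}$ and $\log R_t\sim\log t$ — and split, with $\tau_D=\inf\{s\ge 0: B_s\notin D\}$,
\[
u_t^\ep(0)=\E_0\big[e^{V_t^\ep(0)}\mathbf{1}_{\{\tau_{Q_{R_t}}\ge t\}}\big]+\E_0\big[e^{V_t^\ep(0)}\mathbf{1}_{\{\tau_{Q_{R_t}}< t\}}\big]=:A_t^\ep+B_t^\ep .
\]

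\emph{The confined part $A_t^\ep$.} I would fix $\der>0$ and conjugate exponents $\al,\beta>1$ with $\al$ close to $1$, apply Lemma~\ref{lem:1}(i) with $\xi=\dot{W}^\ep$ and $D=Q_{R_t}$, and then Lemma~\ref{lem:0}(i) with the pointwise potential $\al\dot{W}^\ep$ on $Q_{R_t}$. This gives
\[
A_t^\ep\le \big(\E_0 e^{\beta\int_0^\der\dot{W}^\ep(B_s)ds}\big)^{1/\beta}\,(2\pi\der)^{-1/(2\al)}\,(2R_t)^{1/\al}\,e^{\frac{t-\der}{\al}\la_{\al\dot{W}^\ep}(Q_{R_t})}.
\]
For fixed $\der$ the first factor is a.s.\ finite (an exponential moment of a functional of $B$ against a fixed smooth function with subquadratic growth), so, since $\frac1t\log R_t\to 0$, I would conclude $\frac1t\log A_t^\ep\le \frac1\al\la_{\al\dot{W}^\ep}(Q_{R_t})+o(1)$ as $t\to\infty$.

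\emph{The escape part $B_t^\ep$ and passage to the limit.} I would decompose $B_t^\ep=\sum_{n\ge 1}\E_0[e^{V_t^\ep(0)}\mathbf{1}_{\{\tau_{Q_{nR_t}}<t\le\tau_{Q_{(n+1)R_t}}\}}]$, Cauchy--Schwarz each summand into $\E_0[e^{2V_t^\ep(0)}\mathbf{1}_{\{\tau_{Q_{(n+1)R_t}}\ge t\}}]^{1/2}\,\bP_0(\tau_{Q_{nR_t}}<t)^{1/2}$, bound the first factor exactly as in the previous step (now with $2\dot{W}^\ep$ and box $Q_{(n+1)R_t}$), using that $\la_{2\al\dot{W}^\ep}(Q_m)=O((\log m)^{1/(1+H)})$ — which follows from Lemma~\ref{lem:-1}, the eigenvalue asymptotics of Propositions~\ref{prop:ub}--\ref{prop:lb}, and the scaling $\ce(p\dot{W})=p^2\ce$ of \eqref{eq:ce-scaling} — and bound the second by the reflection estimate $\bP_0(\tau_{Q_{nR_t}}<t)\le 4e^{-n^2R_t^2/(2t)}$. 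Because $R_t^2/t$ dominates $t(\log(nR_t))^{1/(1+H)}$ for every $n\ge 1$ by the choice of $R_t$, the resulting series converges and $B_t^\ep\to 0$ as $t\to\infty$, so it contributes nothing: $\frac1t\log u_t^\ep(0)\le\frac1\al\la_{\al\dot{W}^\ep}(Q_{R_t})+o(1)$. Letting $\ep\downarrow 0$, using $u_t^\ep(0)\to u_t(0)$ and Lemma~\ref{lem:-1} (so $\limsup_{\ep\downarrow 0}\la_{\al\dot{W}^\ep}(Q_{R_t})\le\la_{\al\dot{W}}^{+}(Q_{R_t})$), I obtain $\frac1t\log u_t(0)\le\frac1\al\la_{\al\dot{W}}^{+}(Q_{R_t})+o(1)$ a.s. By Propositions~\ref{prop:ub}--\ref{prop:lb} applied to the noise $\al\dot{W}$ (whose spectral constant is $\al^2 c_H$), together with monotonicity of $D\mapsto\la_{\al\dot{W}}(D)$ and $\log R_t\sim\log t$, one has $\la_{\al\dot{W}}^{+}(Q_{R_t})\sim\al^{2/(1+H)}\,\la_{\dot{W}}(Q_t)$; dividing through yields
\[
\limsup_{t\to\infty}\frac{\frac1t\log u_t(0)}{\la_{\dot{W}}(Q_t)}\le \al^{\,2/(1+H)-1}=\al^{(1-H)/(1+H)}\qquad\textnormal{a.s.}
\]
for every $\al>1$; letting $\al\downarrow 1$ (equivalently $\beta\uparrow\infty$) finishes the proof.

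\emph{Main obstacle.} The delicate point is the escape term $B_t^\ep$: because the Feynman--Kac functional grows \emph{super}-linearly (like $t(\log t)^{1/(1+H)}$), the confinement box must be dilated well beyond the diffusive scale $\sqrt t$, yet kept subpolynomial so that $\la_{\dot{W}}(Q_{R_t})$ does not exceed $(1+o(1))\la_{\dot{W}}(Q_t)$; one must then check that the Gaussian exit probability still beats the eigenvalue growth on all the enlarged boxes $Q_{nR_t}$. A secondary technical nuisance is keeping the $o(1)$ error terms and the interchange of the limits $\ep\to 0$ and $t\to\infty$ clean.
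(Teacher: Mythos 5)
Your proposal is correct and follows essentially the same route as the paper's proof: regularize the noise, split $u_t^\ep(0)$ by exit times from a family of growing boxes, control the confined part via Lemmas~\ref{lem:1}(i) and~\ref{lem:0}(i), kill the escape part with the reflection principle against the $O(t(\log\cdot)^{1/(1+H)})$ eigenvalue growth, pass $\ep\downarrow 0$ via Lemma~\ref{lem:-1}, and convert $\frac1\al\la_{\al\dot W}^{+}$ into $\al^{(1-H)/(1+H)}\la_{\dot W}(Q_t)$ using the eigenvalue asymptotics and the scaling $\ce(\al\dot W)=\al^2\ce$ before letting $\al\downarrow 1$. The only difference is cosmetic: you stratify with linearly growing radii $nR_t$, $R_t=t\log t$, while the paper uses the geometric scale $R_k=\{Mt(\log t)^{1/(2(1+H))}\}^k$; both make the Gaussian exit estimate dominate.
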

\begin{proof}

\noindent
\emph{Step 1: Decomposition of $u_t(0)$.}
To implement the upper bound \eqref{eq:UB-result}, let us introduce a constant $M$ to be specified later on and for $k \geq 1$ let $R_k$ be defined by 
\beq\label{eq:R_k}
R_k = {\lcl Mt(\log t)^{\frac{1}{2(1+H)}} \rcl}^k.
\eeq
Also recall that, according to \eqref{eq:f-k}, we have $u_t(x) = \E_x \lc \exp \lp V_t(x) \rp \rc$, where $V_t(x)$ and $V_t^{\ep}(x)$ are defined by \eqref{eq:V_t-def}. We now set $V_t(0) = V_t$ and $V_t^{\ep}(0) = V_t^{\ep}$. With these notations in hand, we decompose $u_t(0)$ as:
\begin{align}\label{eq:UB-u_t-decomp}
u_t(0) &= \E_0 \lc \exp\lp V_t \rp \rc\nonumber \\ &= \E_0 \lc \exp\lp V_t \rp \mathbf{1}_{ \lcl \tau_{Q_{R_1}} \geq t \rcl} \rc
+ \sum_{k=1}^{\infty} \E_0 \lc \exp\lp V_t \rp \mathbf{1}_{\lcl \tau_{Q_{R_k}} < t \leq \tau_{Q_{R_{k+1}}} \rcl} \rc
\end{align}

In order to upper bound the terms in our decomposition \eqref{eq:UB-u_t-decomp}, we apply H\"older's inequality to each term in the sum. We get:
\beq\label{eq:UB-u_t-decomp-shorter}
u_t(0) \leq U_{t,0} + \sum_{k=1}^{\infty} \bp_0(\tau_{Q_{R_k}} < t)^{1/2} U_{t,k}
\eeq
where 
\beq\label{eq:U_t,0}
U_{t,0} = \E_0 \lc e^{V_t} \mathbf{1}_{\lcl \tau_{Q_{R_1}} \geq t \rcl} \rc,
\eeq
and for $k \geq 1$
\beq\label{eq:U_t,k}
U_{t,k} = \E_0^{1/2} \lc e^{2V_t} \mathbf{1}_{\lcl \tau_{Q_{R_{k+1}}} \geq t \rcl} \rc.
\eeq
We will now bound the terms $U_{t,k}$ separately. 

\noindent
\emph{Step 2: Regularization.}
Let us replace the quantities $V_t$ by $V_t^{\ep}$ in the definition of $U_{t,k}$ for $k \geq 0$. The corresponding random variables are denoted by $U_{t,k}^{\ep}$. We start by getting a uniform bound for $U_{t,0}^{\ep}$. Namely using Lemma~\ref{lem:1}(i) write
\begin{align}\label{eq:UB-u_t-decomp-part1}
&U_{t,0}^{\ep} = \E_0 \lc   e^{V_{t}^{\ep}} \mathbf{1}_{\lcl \tau_{Q_{R_1}} \geq t \rcl} \rc = \E_0 \lc \exp \lp \int_0^t \dot{W}^{\ep} (B_r) dr \rp \mathbf{1}_{\lcl \tau_{Q_{R_1}} \geq t \rcl} \rc \nonumber \\
& \leq {\lp \E_0 \lc \exp(q \int_0^1 \dot{W}^{\ep} (B_s) ds) \rc \rp}^{1/q} {\lp \dfrac{1}{\sqrt{2\pi}} \int_{Q_{R_1}} \E_x \lc \exp\lp p \int_{0}^{t-1}\dot{W}^{\ep} (B_s) ds \rp \mathbf{1}_{\lcl\tau_{Q_{R_1}}\geq t-1 \rcl} \rc dx \rp}^{1/p}\nonumber \\
&= {\lp \E_0 \lc e^{q V_1^{\ep}} \rc \rp}^{1/q} {\lp \dfrac{1}{\sqrt{2\pi}} \int_{Q_{R_1}} \E_x \lc e^{pV_{t-1}^{\ep}(x)} \mathbf{1}_{\lcl\tau_{Q_{R_1}} \geq {t-1}\rcl} \rc  dx \rp}^{1/p}
\end{align}

\noindent
We can now apply Lemma~\ref{lem:0}(i) to the right hand side of the above equation. This yields
$$
\int_{Q_{R_1}} \E_x \lc e^{p V_{t-1}^{\ep}(x)} \mathbf{1}_{\lcl \tau_{Q_{R_1}}\geq t-1 \rcl} \rc dx \leq \lln Q_{R_1} \rrn \exp \lc (t-1) \la_{p \dot{W}^{\ep}} (Q_{R_1}) \rc
$$

\noindent
Computing the volume $\lln Q_{R_1} \rrn$ and plugging into \eqref{eq:UB-u_t-decomp-part1} we obtain:
\begin{equation}\label{eq:ineq-regularised}
U_{t,0}^{\ep} \leq {\lp \E_0 \lc \exp\lp q V_1^{\ep} \rp \rc \rp}^{\frac{1}{q}} {\lp \dfrac{2R_1^2}{\pi} \rp}^{\frac{1}{2p}}
 \exp \lc \dfrac{(t-1)}{p} \la_{p \dot{W}^{\ep}} (Q_{R_1}) \rc.
\end{equation}
We now take limits in equation~\eqref{eq:ineq-regularised}. In order to handle the left hand side of \eqref{eq:ineq-regularised}, we observe that the random variable $\E_0 \lc e^{V_t^{\ep}} \rc$ converges in any $L^q(\Omega)$ to $\E_0 \lc e^{V_t} \rc$ thanks to Proposition~\ref{prop:exp_moment_conv}. Therefore for all $q \geq 0$, an easy application of H\"older's inequality shows that 
$$
L^q(\Omega) - \lim_{\ep \downarrow 0} \E_0 \lc e^{V_t^{\ep}} \mathbf{1}_{\lcl \tau_{Q_{R_1}} \geq t \rcl} \rc = \E_0 \lc e^{V_t} \mathbf{1}_{\lcl \tau_{Q_{R_1}} \geq t \rcl} \rc,
$$
where $L^q(\Omega)$ is the space of $L^q$ random variables on $(\Omega, \cf, \bp)$, see Notation~\ref{not:FK-1}. It follows that there exists a subsequence $\lcl {\ep}_n ; n \geq 0 \rcl$ such that $\bp-\textnormal{a.s.}$ we have 
$$
\lim_{n \to \infty} \E_0 \lc e^{V_t^{\ep_n}} \mathbf{1}_{\lcl \tau_{Q_{R_1}} \geq t \rcl} \rc = \E_0 \lc e^{V_t}\mathbf{1}_{\lcl \tau_{Q_{R_1}} \geq t \rcl} \rc,
$$
and in particular 
$$
\E_0 \lc e^{V_t} \mathbf{1}_{\lcl \tau_{Q_{R_1}} \geq t\rcl} \rc \leq \liminf_{\ep \downarrow 0} \E_0 \lc e^{V_t^{\ep}} \mathbf{1}_{\lcl \tau_{Q_{R_1}} \geq t \rcl} \rc.
$$
The same kind of arguments applied to the right hand side of \eqref{eq:ineq-regularised} yield the following relation (recall that $U_{t,0} = \E_0 [ e^{V_t} \mathbf{1}_{\lbrace \tau_{Q_{R_1}} \geq  t \rbrace} ]$ according to \eqref{eq:U_t,0}):
\beq\label{eq:UB-2}
U_{t,0} \leq {\lp \E_0 \lc e^{q V_1} \rc \rp}^{\frac{1}{q}} {\lp \dfrac{2R_1^2}{\pi} \rp}^{\frac{1}{2p}} \exp {\lc \dfrac{(t-1)}{p} \la_{p \dot{W}}^{+} \lp Q_{R_1} \rp \rc}.
\eeq

We can proceed similarly in order to bound the terms $U_{t,k}$ in \eqref{eq:U_t,k}. Indeed applying Cauchy-Schwarz inequality and following the same steps as for \eqref{eq:UB-u_t-decomp-part1}-\eqref{eq:ineq-regularised} we get, for all $k \geq 1$
\beq\label{eq:UB-3}
\E_0 \lc \exp(2 V_t) \mathbf{1}_{\lcl \tau_{Q_{R_{k+1}}} \geq t \rcl} \rc \leq {\lp \E_0 \lc \exp(4 V_1) \rc \rp}^{\frac{1}{2}} {\lp \dfrac{2R_{k+1}^2}{\pi} \rp}^{\frac{1}{4}} \exp \lc \frac{(t-1)}{2} \la_{4\dot{W}}^{+} \lp Q_{R_{k+1}} \rp \rc.
\eeq

\noindent
Consequently plugging \eqref{eq:UB-2} and \eqref{eq:UB-3} into \eqref{eq:UB-u_t-decomp-shorter}, we end up with:
\begin{equation}\label{eq:UB-4}
u_t(0) \leq a_{1,p,q} \mathcal{M}_{p,t} + a_2 \mathcal{R}_{t},
\end{equation}
where 
\beq\label{eq:M_t,R_t}
\mathcal{M}_{p,t} =  \exp\lp \dfrac{(t-1)}{p} \la_{p\dot{W}}\lp Q_{R_1} \rp \rp \text{and}~\mathcal{R}_t = \sum_{k=1}^{\infty} \al_k \exp \lp \dfrac{(t-1)}{4} \la_{4 \dot{W}}^{+} \lp Q_{R_{k+1}} \rp\rp,
\eeq
and where we also recall that $R_k$ is defined by $\eqref{eq:R_k}$, and the constants $a_{1,p,q}$ and $a_2$ are given by 
\begin{align*}
a_{1,p,q} = {\lp{\dfrac{2R_1^2}{\pi}}\rp}^{\frac{1}{2p}} {\lp \E \lc \exp(q V_1) \rc \rp}^{\frac{1}{q}}, \hspace{0.2in}
a_2 = {\lp \E_0 \lc \exp(4V_1) \rc \rp}^{\frac{1}{4}}.
\end{align*}
In \eqref{eq:UB-4}, the constants $\al_k$ for $k \geq 1$ are also defined by:
\beq\label{eq:al_k}
\al_k = {\lp \bP_0 \lp \tau_{Q_{R_k}} < t \rp \rp}^{\frac{1}{2}} {\lp \dfrac{2 R_{k+1}^2}{\pi} \rp}^{\frac{1}{8}}.
\eeq
We will now treat the terms in \eqref{eq:UB-4} separately.

\noindent
\emph{Step 3: Bound on $u_t(0)$.}
Let us first bound the constants $\al_k$ in \eqref{eq:al_k}. To this aim, we can invoke the reflection principle for Brownian motions (see e.g. \cite[section 2.6]{Karatzas}), which asserts that 
$$
\bP_0 \lp \tau_{Q_{R_k}} < t \rp \leq \dfrac{4}{\sqrt{2 \pi t}} \int_{R_k}^{\infty} e^{-\frac{y^2}{2t}} dy \leq \dfrac{4 \sqrt{t}}{ \sqrt{2 \pi} R_k} e^{-\frac{R_k^2}{2t}}. 
$$
Furthermore when $t$ is large enough, it is readily checked from the expression \eqref{eq:R_k} of $R_k$ that
$$
\dfrac{4 \sqrt{t}}{\sqrt{2 \pi} R_k} \leq 1,
$$
uniformly in $k \geq 1$. Therefore we get 
$$
\bP_0 \lp \tau_{Q_{R_k}} < t \rp \leq e^{- \frac{R_k^2}{2t}}.
$$
Plugging this inequality in equation~\eqref{eq:al_k} and designating by $c$ a universal constant which can change from line to line, we get
\beq\label{eq:al_k-bnd}
\al_k \leq c R_{k+1}^{\frac{1}{4}} e^{-\frac{R_k^2}{4t}}.
\eeq

We now prove the convergence of the weighted sum defining $\mathcal{R}_t$ in \eqref{eq:M_t,R_t}. To this aim, recalling
the asymptotic relation proved in Proposition~\ref{prop:ub}, we can say that for $t$ sufficiently large:
\beq\label{eq:la4W(QRk+1)bound}
\exp \lc \dfrac{(t-1)}{4} \la_{4\dot{W}}^{+} \lp Q_{R_{k+1}} \rp \rc \leq \exp \lc \dfrac{(t-1)}{4} \lp 16 (2 c_H \ce)^{\frac{1}{1+H}} + 1\rp {\lp \log (R_{k+1}) \rp}^{\frac{1}{1+H}}\rc.
\eeq
Consequently, using our bound \eqref{eq:al_k-bnd} on $\al_k$ and the expression \eqref{eq:R_k} for $R_k$ we have the following:
\beq\label{eq:R-t-ineq}
\mathcal{R}_t \leq \sum_{k=1}^{\infty} A_{k,t} B_{k,t} C_{k,t},
\eeq
where 
\begin{align}\label{eq:A_k,t}
A_{k,t} &= c R_{k+1}^{\frac{1}{4}} = c M^{\frac{k+1}{4}} t^{\frac{k+1}{4}} {\lp \log t \rp}^{\frac{k+1}{8(1+H)}} \leq c M^{\frac{k}{2}} t^{\frac{k}{2}} {\lp \log t \rp}^{\frac{k}{4(1+H)}},\\
B_{k,t} &= \exp {\lp -\frac{R_k^2}{4t} \rp} = \exp \lc -\frac{1}{4} M^{2k} t^{2k-1} (\log t)^{\frac{k}{1+H}} \rc, \nonumber \\
C_{k,t} &= \exp \lc \dfrac{(t-1)}{4} \la_{4\dot{W}}^{+} \lp Q_{R_{k+1}} \rp \rc. \nonumber
\end{align}
Furthermore, thanks to \eqref{eq:la4W(QRk+1)bound} for $t$ large enough we have:
$$
C_{k,t} \leq \exp \lc c\, t {\lp\log R_{k+1}\rp}^{\frac{1}{1+H}} \rc.
$$
Thus, plugging the value \eqref{eq:R_k} of $R_k$ into the above inequality we get
$$
C_{k,t} \leq \exp \lc  c\, t (k+1)^{\frac{1}{1+H}} {\lp \log M + \log t + \dfrac{1}{2(1+H)} \log\log t \rp}^{\frac{1}{1+H}} \rc.
$$
It is then readily checked for large enough $t$, that  
$$
C_{k,t} \leq \exp \lc c (k+1)^{\frac{1}{1+H}}  t (\log t)^{\frac{1}{1+H}}\rc.
$$
In addition, it is easily seen that for any arbitrary constant $c>0$, there exists $M$ large enough such that $M^{2k} > 8c{(k+1)}^{\frac{1}{1+H}}$ uniformly in $k$. Therefore for this value of $M$ and for all $k \geq 1$ we have
\begin{align}\label{eq:B_k,tC_k,t}
B_{k,t} C_{k,t} &\leq \exp \lc c (k+1)^{\frac{1}{1+H}}  t (\log t)^{\frac{1}{1+H}}  -\frac{1}{4} M^{2k} t^{2k-1} (\log t)^{\frac{2k}{2(1+H)}}\rc \nonumber\\
&\leq \exp \lc -\dfrac{1}{8} M^{2k} t^{2k-1} (\log t)^{\frac{2k}{2(1+H)}} \rc \leq \exp \lc -\dfrac{1}{8} M^{k} t^{k} (\log t)^{\frac{k}{2(1+H)}} \rc.
\end{align}
Combining \eqref{eq:A_k,t} and \eqref{eq:B_k,tC_k,t} we have thus obtained
$$
A_{k,t} B_{k,t} C_{k,t} \leq c_1 \eta_t^k e^{-c_2 \eta_t^{2k}},~\text{where}~\eta_t = \sqrt{M t (\log t)^{\frac{1}{2(1+H)}}}.
$$
Furthermore, we have that $\eta_t^{2k} > k \eta_t$ for all positive integers $k$ if $\eta_t > \sqrt{2}$. Thus, for sufficiently large $t$:
$$
A_{k,t} B_{k,t} C_{k,t} \leq c_1 \eta_t^k e^{-c_2 {k \eta_t}}.
$$
Recalling \eqref{eq:R-t-ineq}, the following bound holds true for the term $\mathcal{R}_t$ defined by \eqref{eq:UB-4}-\eqref{eq:M_t,R_t}:
\beq\label{eq:R-t-bnd}
\mathcal{R}_t \leq  c_1 \sum_{k=1}^{\infty} {\lp \eta_t e^{-c_2 \eta_t} \rp}^k < 2
\eeq
for all sufficiently large $t$ such that $\eta_t e^{-c_2 \eta_t}< \frac{1}{2}$.

Now let us work with the term $\mathcal{M}_{p,t}$ in the expression \eqref{eq:UB-4}. Observe that using Theorem~\ref{thm:Introthm-2} we have:
\begin{align}\label{eq:u_t-decomp-part1}
\lim_{t \to \infty} \dfrac{\la_{p \dot{W}} \lp Q_{R_1} \rp}{\la_{\dot{W}} (Q_t)} &= \lim_{t \to \infty} \dfrac{\la_{p \dot{W}} \lp Q_{R_1} \rp}{{\lp \log(R_1)\rp}^{\frac{1}{1+H}}} \dfrac{{\lp \log(t) \rp}^{\frac{1}{1+H}}}{\la_{\dot{W}}(Q_t)} \dfrac{{\lp \log(R_1)\rp}^{\frac{1}{1+H}}}{{\lp \log(t) \rp}^{\frac{1}{1+H}}}\nonumber \\
&= \dfrac{(2 c_H p^2 \ce)^{1/(1+H)}}{(2 c_H \ce)^{1/(1+H)}}\nonumber \\
&= p^{\frac{2}{1+H}},
\end{align}
where we have also used the form of $R_k$ from \eqref{eq:R_k} to show that the limit of $\log (R_1)/ \log(t)$  as $t$ goes to infinity is $1$. Plugging this identity into the definition \eqref{eq:M_t,R_t}, we get that 
\beq\label{eq:M_p,t-asymp_behav}
\dfrac{1}{t} \log (\mathcal{M}_{p,t}) \sim p^{\frac{1-H}{1+H}} \la_{\dot{W}} (Q_t) ,
\eeq
as $t$ goes to infinity. In particular, owing to Theorem~\ref{thm:Introthm-2} we have
$$
\lim_{t \to \infty} \dfrac{1}{t} \log(\mathcal{M}_{p,t}) = \infty \hspace{0.2in} \textnormal{a.s.}
$$
\noindent
Finally, going back to \eqref{eq:UB-4} we write
\beq\label{eq:log-u_t-asymp-1}
\dfrac{1}{t} \log \lp u_t(0) \rp \leq \dfrac{1}{t} \log \lp \mathcal{M}_{p,t} \rp + \dfrac{1}{t} \log \lp a_{1,p,q} + a_2 \dfrac{\mathcal{R}_t}{\mathcal{M}_{p,t}} \rp.
\eeq
Due to \eqref{eq:R-t-bnd} and the fact that $\lim_{t \to \infty} \cm_{p,t} = \infty$, we have
$$
\lim_{t \to \infty} \dfrac{1}{t} \log \lp a_{1,p,q} + a_2 \dfrac{\mathcal{R}_t}{\cm_{p,t}} \rp = 0.
$$
Therefore, thanks to \eqref{eq:M_p,t-asymp_behav}, relation \eqref{eq:log-u_t-asymp-1} entails the following upper bound:
\beq\label{eq:p-dep-UB}
\limsup_{t \to \infty} \dfrac{\frac{1}{t} \log (u_t(0))}{\la_{\dot{W}} (Q_t)} \leq p^{\frac{1-H}{1+H}}.
\eeq
At the very end, notice that the parameter $p > 1$ in \eqref{eq:p-dep-UB} can be chosen arbitrarily close to $1$. Therefore, taking limits as $p \downarrow 1$ in \eqref{eq:p-dep-UB}, we end up with
$$
\limsup_{t \to \infty}  \dfrac{\frac{1}{t}\log(u_t(0))}{\la_{\dot{W}} (Q_t)} \leq 1. \hspace{0.2in}\textnormal{a.s.},
$$
which is our claim \eqref{eq:UB-result}.
\end{proof}

\subsection{Lower Bound.}
This section is devoted to finding a lower bound for $\log \lp u_t(0) \rp$ matching the upper bound \eqref{eq:UB-result}. Specifically we will get the following result.
\begin{proposition}
Let $u_t$ be the field defined by \eqref{eq:f-k}. Then the following holds:
\beq\label{b1}
\liminf_{t \to \infty}\dfrac{\frac{1}{t}\log(u_t(0))}{\la_{\dot{W}} (Q_t)} \geq 1,
\eeq
where we recall that $Q_t = (-t, t)$ and $\la_{\dot{W}}(D)$ is introduced in \eqref{eq:la}.
\end{proposition}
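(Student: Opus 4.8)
The plan is to prove~\eqref{b1} by a Feynman--Kac splitting that mirrors Proposition~\ref{prop:UB}: peel off a short initial time segment of the Brownian path, confine the remainder to a box which is asymptotically $Q_t$, and recognise the exponential growth rate of the confined functional as a principal eigenvalue on that box. First I would work with the smoothed potential $\dot{W}^{\ep}$ of~\eqref{eq:noise-regularised} and the pointwise functional $V_t^{\ep}(x)=\int_0^t\dot{W}^{\ep}(B_s)\,ds$, so that $u_t^{\ep}(0)=\E_0[e^{V_t^{\ep}}]$. Since $u_t^{\ep}(0)\to u_t(0)$ uniformly in $t$ (Proposition~\ref{prop:F-K} together with Proposition~\ref{prop:exp_moment_conv}) and the bound we want blows up, it suffices to establish the estimate for $u_t^{\ep}(0)$ with a genuine pointwise potential, along a subsequence $t_k=2^k$, and then to let $\ep\downarrow0$ at the end through Lemma~\ref{lem:-1}; a routine diagonalisation handles the interchange of the $\ep\downarrow0$ and $t\to\infty$ limits, using that the exponential moments of $V^{\ep}$ are dominated by those of $V$ uniformly in $\ep$ (conditioning on $B$ and using $\cf l(\ep\cdot)\leq1$).

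Next I would fix H\"older conjugate pairs $(\al,\beta)$ and $(\al',\beta')$ with $\al,\al'>1$, put $\theta=(\al\al')^{-1}\in(0,1)$, and choose $\der_t=t(\log t)^{-3}$ and $R_t=t(\log t)^{-2}$, so that $\der_t\to\infty$, $\der_t=o(t)$, $\log R_t\sim\log t$ and $R_t^2/(2t\der_t)=(2\log t)^{-1}\to0$. Applying Lemma~\ref{lem:1}(ii) with potential $\dot{W}^{\ep}$, domain $Q_{R_t}$, time cut $\der_t$ and exponents $(\al,\beta)$ bounds $u_t^{\ep}(0)$ from below by
\[
\Big(\E_0 e^{-\frac{\beta}{\al}V_{\der_t}^{\ep}}\Big)^{-\al/\beta}\Big(\int_{Q_{R_t}}p_{\der_t}(x)\,\E_x\big[e^{\frac1\al V_{t-\der_t}^{\ep}(x)}\mathbf 1_{\{\tau_{Q_{R_t}}\geq t-\der_t\}}\big]\,dx\Big)^{\al}.
\]
For the first factor, $\E_0\otimes\be[e^{\frac{\beta}{\al}V_{\der_t}^{\ep}}]\leq\E_0\otimes\be[e^{\frac{\beta}{\al}V_{\der_t}}]\leq e^{C\der_t}$ for $t$ large, with $C$ independent of $\ep$ (the subadditive estimate in Step~3 of Proposition~\ref{prop:exp_moment_conv}); the symmetry $\dot{W}^{\ep}\stackrel{d}{=}-\dot{W}^{\ep}$ yields the same bound with $-\frac{\beta}{\al}$, and a Borel--Cantelli argument along $t_k=2^k$ makes this factor $\geq e^{-C'\der_{t_k}}=e^{o(t_k)}$ almost surely. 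For the integral factor, bound $p_{\der_t}(x)\geq(2\pi\der_t)^{-1/2}e^{-R_t^2/(2\der_t)}$ on $Q_{R_t}$ and then apply Lemma~\ref{lem:0}(ii) with potential $\frac1\al\dot{W}^{\ep}$, the same box $Q_{R_t}$, time $t-\der_t$, a fixed $\der'$ and exponents $(\al',\beta')$: the polynomial prefactors are $t^{O(1)}$ and the correction $\exp\{-\der'\frac{\al'}{\beta'}\la_{(\beta'/\al'\al)\dot{W}^{\ep}}(Q_{R_t})\}$ involves only the polylogarithmic quantity $\la_{\cdot}(Q_{R_t})$, so both are negligible after division by $t$, respectively by $\la_{\dot{W}}(Q_t)$; the surviving term is $\exp\{\al\al'(t-\der_t+\der')\la_{\theta\dot{W}^{\ep}}(Q_{R_t})\}$.

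Putting these together yields, for $t$ large,
\[
\frac1t\log u_t^{\ep}(0)\;\geq\;\al\al'\,\frac{t-\der_t+\der'}{t}\,\la_{\theta\dot{W}^{\ep}}(Q_{R_t})-\rho_t
\]
with $\rho_t\to0$ and $\al\al'(t-\der_t+\der')/t\to\al\al'=\theta^{-1}$. Sending $\ep=\ep_k\downarrow0$ along $t_k=2^k$, using Lemma~\ref{lem:-1} to pass from $\la_{\theta\dot{W}^{\ep_k}}(Q_{R_{t_k}})$ to $\la_{\theta\dot{W}}(Q_{R_{t_k}})$ from below, and $u_{t_k}^{\ep_k}(0)\to u_{t_k}(0)$, one gets $\frac1{t_k}\log u_{t_k}(0)\geq\frac1\theta\la_{\theta\dot{W}}(Q_{R_{t_k}})(1+o(1))$ a.s. Dividing by $\la_{\dot{W}}(Q_{t_k})$ and invoking the asymptotics~\eqref{eq:la_asymptotics} (Propositions~\ref{prop:ub} and~\ref{prop:lb}) for both $\theta\dot{W}$ and $\dot{W}$, the scaling $\ce(\theta\dot{W})=\theta^2\ce$ of~\eqref{eq:ce-scaling}, and $\log R_{t_k}\sim\log t_k$, the ratio $\la_{\theta\dot{W}}(Q_{R_{t_k}})/\la_{\dot{W}}(Q_{t_k})$ tends to $\theta^{2/(1+H)}$, so that
\[
\liminf_{k\to\infty}\frac{\frac1{t_k}\log u_{t_k}(0)}{\la_{\dot{W}}(Q_{t_k})}\;\geq\;\theta^{\frac{2}{1+H}-1}=\theta^{\frac{1-H}{1+H}}\qquad\textnormal{a.s.}
\]
Letting $\al,\al'\downarrow1$, i.e.\ $\theta\uparrow1$, gives the bound along $(t_k)$, and an elementary monotonicity argument (as at the end of Propositions~\ref{prop:ub} and~\ref{prop:lb}) upgrades it to $\liminf_{t\to\infty}$, which is~\eqref{b1}.

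The main obstacle is the joint bookkeeping of the two H\"older splittings and the regularisation limit: the exponents force a rescaling of the noise by $\theta<1$, which costs the factor $\theta^{(1-H)/(1+H)}$ in the eigenvalue and can only be recovered by letting $\theta\uparrow1$ after all other limits; and the scales $\der_t,R_t$ must be placed in the narrow window where the short-time prefactor, the Gaussian cost $e^{-R_t^2/(2\der_t)}$ of the path reaching the edge of $Q_{R_t}$, and the polynomial and polylogarithmic errors are all $o(t\,\la_{\dot{W}}(Q_t))$, while at the same time $\log R_t\sim\log t$ so that $\la_{\dot{W}}(Q_{R_t})\sim\la_{\dot{W}}(Q_t)$.
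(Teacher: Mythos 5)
Your architecture coincides with the paper's: regularize the noise, use Lemma~\ref{lem:1}(ii) to peel off a short initial time segment, use Lemma~\ref{lem:0}(ii) on a box of radius comparable to $t$ to produce a principal eigenvalue, remove the mollification via Lemma~\ref{lem:-1}, and absorb the H\"older losses through the eigenvalue asymptotics \eqref{eq:la_asymptotics} together with the scaling \eqref{eq:ce-scaling}. Your bookkeeping with $\theta=(\al\al')^{-1}$ and the resulting exponent $\theta^{(1-H)/(1+H)}\uparrow 1$ is a correct (indeed cleaner) account of what the paper does with a single conjugate pair $(p,q)$ and the parameters $\der=t^{b}$, $D=Q_{t^{b}}$.

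There is, however, a genuine gap in your treatment of the first factor. You assert $\E_0\otimes\be\lc e^{\frac{\beta}{\al}V_{\der_t}}\rc\le e^{C\der_t}$, citing the subadditive estimate of Step~3 of Proposition~\ref{prop:exp_moment_conv}. That step gives only \emph{finiteness} of $\E\lc\exp(\theta Z_t)\rc$ for each fixed $\theta$ and $t$, and the limit $\frac1t\log\E\lc\exp(\theta Z_t)\rc\to\Psi(\theta)$ for \emph{fixed} $\theta$; but by \eqref{a2} one has $\be\lc e^{qV_s}\rc=\exp\lp\tfrac{q^2}{2}\,sZ_s\rp$, so the relevant parameter is $\theta=\tfrac{q^2}{2}s$, which grows with $s$. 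By Brownian scaling $sZ_s\stackrel{d}{=}s^{1+H}Z_1$ with $Z_1>0$ a.s., whence $\log\E\otimes\be\lc e^{qV_s}\rc\ge c\,s^{1+H}$ for large $s$: the annealed exponential moment is superlinear in $s$, the bound $e^{C\der_t}$ is false, and the Markov/Borel--Cantelli argument built on it produces an a.s.\ bound far larger than $o(t)$ at the scale $\der_t=t(\log t)^{-3}$. The correct route --- the one the paper takes --- is to use the \emph{quenched} bound: combining the already established upper bound \eqref{eq:UB-result} with Proposition~\ref{prop:ub} gives $\log\E_0\lc\exp\{cV_s\}\rc\le C\,s(\log s)^{1/(1+H)}$ a.s.\ for all large $s$ (the same holds with $-c$ by symmetry of $W$), and evaluated at $s=\der_t$ this is $o(t)$, hence negligible after division by $\la_{\dot W}(Q_t)$. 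With this replacement your proof goes through; as a side benefit the subsequence $t_k=2^k$ becomes unnecessary, which also removes the interpolation from $\liminf_k$ to $\liminf_t$ --- a step that is not ``elementary monotonicity'' for $u_t(0)$, since $\E_0[e^{V_t}]$, unlike $\la_{\dot W}(Q_t)$, is not monotone in $t$.
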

\begin{proof}
\noindent
Let $p,q >1$ satisfy $\frac{1}{p} + \frac{1}{q} = 1$ with $p$ close to $1$ and let $0 < b <1$ be close to $1$. From Lemma~\ref{lem:1}(ii), taking $\al = p$, $q = \beta$, $\der = t^b$ and $\xi = \dot{W}^{\ep}$, we get
\begin{multline}
u_t^{\ep}(0) = \E_0 \lc \exp{\lp \int_0^t \dot{W}^{\ep} (B_s)ds \rp} \rc \geq {\lp \E_0 \lc \exp \lp -\dfrac{q}{p} \int_0^{t^b} \dot{W}^{\ep} (B_s) ds \rp \rc \rp}^{-\frac{p}{q}} \\
\times {\lcl \int_{Q_{t^b}} p_{t^b}(x) \E_x \lc \exp \lp \dfrac{1}{p} \int_0^{t-t^b}\dot{W}^{\ep} (B_s) ds \rp \mathbf{1}_{\lcl \tau_{Q_{t^b}} \geq t-t^b \rcl} \rc dx\rcl}^p,
\end{multline}
where we recall that $p_{\der}$ is the heat kernel in $\R$. Hence some elementary bounds on $p_{\der}$ over $Q_{t^b}$ yield
\beq\label{eq:LB-1}
u_t^{\ep}(0) \geq D_{\ep, b, p, t} F_{\ep, b, p, t},
\eeq
where 
\begin{align*}
D_{\ep, b, p, t} &= {\lp \E_0 \lc \exp \lp -\dfrac{q}{p} \int_0^{t^b} \dot{W}^{\ep} (B_s) ds \rp \rc \rp}^{-\frac{p}{q}},\\
F_{\ep, b, p, t} &= {\lcl \dfrac{e^{-t^b/2}}{\sqrt{2 \pi t^b}} \int_{Q_{t^b}} \E_x \lc \exp \lp \dfrac{1}{p} \int_0^{t-t^b}\dot{W}^{\ep} (B_s) ds \rp \rc dx\rcl}^p.
\end{align*}
We will now bound $D_{\ep,b,p,t}$ and $F_{\ep,b,p,t}$ separately.
\noindent
In order to bound $F_{\ep, b, p, t}$ we apply Lemma~\ref{lem:0}(ii), taking $\al = p$, $\beta = q$, $t = t-t^b$ and $\der = t^b$:
\begin{multline*}
\int_{Q_{t^b}} \E_x \lc \exp {\lp \int_0^{t-t^b} \dot{W}^{\ep} (B_s) ds \rp} \rc dx \\
\geq (2 \pi)^{\frac{p}{2}} t^{\frac{b}{2}} (t-t^b)^{\frac{p}{2q}} (2 t^b)^{-\frac{2p}{q}}
 \exp \lp -t^b \dfrac{p}{q} \la_{\frac{p}{q} \dot{W}^{\ep}} \lp Q_{t^b} \rp \rp \exp \lp p t \la_{\frac{\dot{W}^{\ep}}{p}} \lp Q_{t^b} \rp \rp.
\end{multline*} 

\noindent
Using \eqref{eq:LB-1} and replacing $e^{- \frac{t^b}{2}}$ by $e^{-C t^b}$ for a larger $C$ to absorb all bounded-by-polynomial quantities, we thus get
\beq\label{eq:F-bnd}
F_{\ep, b, p, t} \geq e^{-Ct^b} \exp\lp -\dfrac{p^2t^b}{q} \la_{\frac{p}{q} \dot{W}^{\ep}} \lp Q_{t^b} \rp\rp \exp \lp t \la_{\frac{\dot{W}^{\ep}}{p}} \lp Q_{t^b} \rp \rp.
\eeq
We now take limits as $\ep \downarrow 0$ in relation \eqref{eq:LB-1}. Invoking Proposition~\ref{prop:exp_moment_conv}, we use our bound \eqref{eq:F-bnd} and Lemma~\ref{lem:-1} which gives 
\beq\label{eq:LB-after_lim_ep}
u_t(0) \geq D_{b,p,t} F_{b,p,t}
\eeq
with
\begin{align*}
D_{b,p,t} &= e^{-Ct^b} {\lp \E_0 \lc \exp \lcl -\dfrac{q}{p} \int_0^{t^b} W(\der_{B_s}) ds \rcl \rc \rp}^{-\frac{p}{q}} ,\\
F_{b,p,t} &= \exp \lp -\dfrac{p^2 t^b}{q} \la_{\frac{p}{q} \dot{W}}^{+} \lp Q_{t^b} \rp \rp \exp \lp t \la_{\frac{\dot{W}}{p}} \lp Q_{t^b} \rp \rp.
\end{align*}

We will now prove that
\beq\label{eq:lim_Dbpt}
\lim_{t \to \infty} \dfrac{1}{t} \log(D_{b,p,t}) = 0
\eeq
Indeed, it is easily seen that 
\beq\label{eq:1-t_log(Dbpt)}
\dfrac{1}{t}\log(D_{b,p,t}) = -\dfrac{C}{t^{1-b}} - \dfrac{p}{q t^{1-b}} \dfrac{\log \lp \E_0 \lc \exp\lcl  -\dfrac{q}{p} \int_0^{t^b} W(\der_{B_s})ds \rcl \rc \rp}{t^b}.
\eeq
Moreover combining \eqref{eq:UB-result} and Proposition~\ref{prop:ub} we get the following bound for $t$ large enough:
$$
\dfrac{\log \lp \E_0 \lc \exp\lcl  -\dfrac{q}{p} \int_0^{t^b} W(\der_{B_s})ds \rcl \rc \rp}{t^b} \leq c (\log t)^{\frac{1}{1+H}}
$$
Plugging this information into \eqref{eq:1-t_log(Dbpt)}, we obtain \eqref{eq:lim_Dbpt}.

Let us now analyse the term $F_{b,p,t}$ in \eqref{eq:LB-after_lim_ep}. We have
$$
\dfrac{1}{t} \log (F_{b,p,t}) = - \dfrac{p^2}{q} \dfrac{ \la_{\frac{p}{q} \dot{W}}^{+} \lp Q_{t^b} \rp}{t^{1-b}} + \la_{\frac{\dot{W}}{p}} \lp Q_{t^b} \rp.
$$
Taking into account the behavior of $\la_{\frac{\dot{W}}{p}}(Q_t)$ given by Proposition~\ref{prop:ub}, we get
\beq\label{eq:lim_Fbpt}
\liminf_{t \to \infty} \dfrac{\frac{1}{t}\log (F_{b,p,t})}{\la_{\frac{\dot{W}}{p}}(Q_{t^b})} \geq 1\hspace{0.2in}\textnormal{a.s.}
\eeq

In conclusion, plugging \eqref{eq:lim_Fbpt} and \eqref{eq:lim_Dbpt} into \eqref{eq:LB-after_lim_ep}, we end up with
\beq\label{eq:before_lim_p,b}
\liminf_{t \to \infty} \dfrac{\frac{1}{t}\log (u_t(0))}{\la_{\frac{\dot{W}}{p}} (Q_{t^b})} \geq 1 \hspace{0.2in} \textnormal{a.s.}
\eeq
Now taking $b \uparrow 1$ and then $p \downarrow 1$ in \eqref{eq:before_lim_p,b}, and observing that $\la$ is monotonic under both maneuver, we get our desired lower bound \eqref{b1}:
$$
\liminf_{t \to \infty} \dfrac{\frac{1}{t}\log(u_t(0))}{\la_{\dot{W}} \lp Q_{t} \rp} \geq 1 \hspace{0.2in}\textnormal{a.s.}
$$
\end{proof}

\bigskip

\end{document}